\numberwithin{equation}{section}
\newtheorem{theorem}{Theorem}[section]
\newtheorem{definition}[theorem]{Definition}
\newtheorem{lemma}[theorem]{Lemma}
\newtheorem{proposition}[theorem]{Proposition}
\newtheorem{remark}[theorem]{Remark}
\DeclareMathOperator{\supp}{supp}
\DeclareMathOperator*{\esssup}{ess\,sup}
\title[Semilinear space-dependent damped wave equation]{
Decay property of solutions to
the wave equation with
space-dependent damping,
absorbing nonlinearity,
and polynomially decaying data
}
\author[Y. Wakasugi]{Yuta Wakasugi}
\address{Laboratory of Mathematics,
Graduate School of Engineering,
Hiroshima University,
Higashi-Hiroshima, 739-8527, Japan}
\email{wakasugi@hiroshima-u.ac.jp}
\date{\today}
\begin{document}

\begin{abstract}
We study the large time behavior of solutions
to the semilinear wave equation with
space-dependent damping and
absorbing nonlinearity in
the whole space or exterior domains.
Our result shows how
the amplitude of the damping coefficient,
the power of the nonlinearity, and
the decay rate of the initial data at the spatial infinity
determine the decay rates of the
energy and the $L^2$-norm of the solution.
In Appendix, we also give a survey of basic results
on the local and global existence of solutions and
the properties of weight functions used in
the energy method.
\end{abstract}

\keywords{wave equation, space-dependent damping, absorbing nonlinearity}
\subjclass[2020]{35L71, 35L20, 35B40}


\maketitle


\section{Introduction}
We study the initial-boundary value problem
of the wave equation with
space-dependent damping and 
absorbing nonlinearity
\begin{align}
\label{dwx}
    \left\{ \begin{array}{ll}
        \partial_t^2 u - \Delta u + a(x) \partial_t u + |u|^{p-1}u = 0, &t>0, x \in \Omega, \\
        u(t,x) = 0, &t>0, x \in \partial \Omega,\\
        u(0,x) = u_0(x), \ 
        \partial_t u(0,x) = u_1(x),
        &x \in \Omega.
        \end{array} \right.
\end{align}
Here,
$\Omega = \mathbb{R}^n$
with $n \ge 1$,
or
$\Omega \subset \mathbb{R}^n$
with $n\ge 2$
is an exterior domain,
that is,
$\mathbb{R}^n \setminus \Omega$
is compact.
We also assume that the boundary
$\partial \Omega$
of $\Omega$ is of class $C^2$.
When
$\Omega = \mathbb{R}^n$,
the boundary condition is omitted
and we consider the initial value problem.
The unknown function
$u=u(t,x)$
is assumed to be real-valued.
The function
$a(x)$
denotes the coefficient of the damping term.
Throughout this paper,
we assume that
$a \in C(\mathbb{R}^n)$
is nonnegative and bounded.
The semilinear term
$|u|^{p-1}u$,
where
$p>1$,
is the so-called absorbing nonlinearity,
which assists the decay of the solution.

The aim of this paper is to obtain
the decay estimates of the energy
\begin{align}\label{eq:def:E}
    E[u](t) &:= \frac{1}{2} \int_{\Omega} (|\partial_t u(t,x)|^2 + |\nabla u(t,x)|^2 ) \,dx
    +\frac{1}{p+1} \int_{\Omega} |u(t,x)|^{p+1}\,dx
\end{align}
and the weighted $L^2$-norm
\begin{align}
    \int_{\Omega} a(x) |u(t,x)|^2 \,dx
\end{align}
of the solution.

First, for the energy
$E[u](t)$,
we observe from the equation \eqref{dwx} that
\begin{align}
    \frac{d}{dt} E[u](t)
    &=
    - \int_{\Omega} a(x) |\partial_t u(t,x)|^2 \,dx,
\end{align}
which gives the energy identity
\begin{align}
    E[u](t) + \int_0^t \int_{\Omega} a(x) |\partial_t u(s,x)|^2 \,dxds
    &=
    E[u](0).
\end{align}
Since
$a(x)$
is nonnegative,
the energy
is monotone decreasing in time.
Therefore, a natural question arises as to
whether the energy tends to zero
as time goes to infinity
and, if that is true,
what the actual decay rate is.
Moreover, we can expect
that the amplitude of the damping coefficient
$a(x)$,
the power $p$ of the nonlinearity, and
the spatial decay of the initial data
$(u_0,u_1)$
will play crucial roles for this problem.
Our goal is to clarify how these three factors
determine the decay property of the solution.

Before going to the main result,
we shall review previous studies on
the asymptotic behavior of solutions to
linear and nonlinear damped wave equations.

The study of the asymptotic behavior of solutions
to the damped wave equation
goes back to the pioneering work by
Matsumura \cite{Ma76}.
He studied the initial value problem of the linear
wave equation with the classical damping
\begin{align}\label{dw}
    \left\{ \begin{array}{ll}
        \partial_t^2 u - \Delta u + \partial_t u = 0, &t>0, x \in \mathbb{R}^n, \\
        u(0,x) = u_0(x), \ 
        \partial_t u(0,x) = u_1(x),
        &x \in \mathbb{R}^n.
        \end{array} \right.
\end{align}
In this case the energy of the solution
$u$
is defined by
\begin{align}\label{def:en:lin}
    E_{L}(t) :=
    \frac{1}{2} \int_{\mathbb{R}^n} (|\partial_t u(t,x)|^2 + |\nabla u(t,x)|^2 ) \,dx.
\end{align}
By using the Fourier transform,
he proved the so-called Matsumura estimates
\begin{align}
    \| \partial_t^k \partial_x^{\gamma} u(t) \|_{L^{\infty}}
    &\le
    C (1+t)^{-\frac{n}{2m}-k-\frac{|\gamma|}{2}}
    \left( \| u_0 \|_{L^m} + \| u_1 \|_{L^m}
    + \| u_0 \|_{H^{[\frac{n}{2}]+k+|\gamma| + 1}}
    + \| u_1 \|_{H^{[\frac{n}{2}]+k+|\gamma|}}
    \right),\\
    \| \partial_t^k \partial_x^{\gamma} u(t) \|_{L^2}
    &\le
    C (1+t)^{-\frac{n}{2}\left(\frac{1}{m}-\frac{1}{2} \right)-k-\frac{|\gamma|}{2}}
    \left( \| u_0 \|_{L^m} + \| u_1 \|_{L^m}
    + \| u_0 \|_{H^{k+|\gamma|}}
    + \| u_1 \|_{H^{k+|\gamma|-1}}
    \right)
\label{Matsumura:est}
\end{align}
for $1 \le m \le 2$, $k \in \mathbb{Z}_{\ge 0}$,
and
$\gamma \in \mathbb{Z}_{\ge 0}^n$,
and applied them to semilinear problems.
In particular, the above estimate implies
\begin{align}
    &(1+t)E_L(t) + \| u(t) \|_{L^2}^2 \\
    &\le
    C (1+t)^{-n\left(\frac{1}{m}-\frac{1}{2}\right)}
    \left( \| u_0 \|_{L^m} + \| u_1 \|_{L^m}
    + \| u_0 \|_{H^1}
    + \| u_1 \|_{L^2}
    \right)^2.
\label{eq:est:en:Mat}
\end{align}
This indicates that the spatial decay of
the initial data improves the time decay of the solution.

Moreover, the decay rate in the
estimates \eqref{Matsumura:est}
suggests that the solution of \eqref{dw}
is approximated by a solution of the corresponding heat equation
\begin{align}
    \partial_t v - \Delta v = 0,
    \quad t>0,x\in \mathbb{R}^n.
\end{align}
This is the so-called
diffusion phenomenon
and firstly proved by
Hsiao and Liu \cite{HsLi92}
for the hyperbolic conservation law with
damping.

There are many improvements and generalizations
of the Matsumura estimates and
the diffusion phenomenon for \eqref{dw}.
We refer the reader to
\cite{ChHa03,HoOg04,IkeInWa17,IkeInOkWa19,Ik02,IkNi03,Ka00,KaNaOn95,MaNi03,Mi21,Na04,Ni03MathZ,SaWa17,So20MA,Ta15,YaMi00}
and the references therein.

Next, we consider
the initial boundary value problem of the
linear wave equation
with space-dependent damping
\begin{align}\label{dwx:lin}
    \left\{ \begin{array}{ll}
        \partial_t^2 u - \Delta u + a(x)\partial_t u = 0, &t>0, x \in \Omega, \\
        u(t,x) = 0,
        &t>0, x\in \partial \Omega,\\
        u(0,x) = u_0(x), \ 
        \partial_t u(0,x) = u_1(x),
        &x \in \Omega.
        \end{array} \right.
\end{align}
Mochizuki \cite{Mo76} firstly
studied the case
$\Omega = \mathbb{R}^n \ (n\neq 2)$
and showed 
that if
$a(x) \le C\langle x \rangle^{-\alpha}$
with $\alpha > 1$,
then the wave operator exists and is not
identically vanishing.
Namely, the energy
$E_L(t)$ defined by \eqref{def:en:lin}
of the solution does not decay to zero
in general, and the solution behaves like
a solution of the wave equation without damping.
This means that
if the damping is sufficiently small at the
spatial infinity,
then the energy of the solution
does not decay to zero in general.
His result actually includes
the time and space dependent damping,
and generalizations in the damping coefficients
and domains can be found in
Mochizuki and Nakazawa \cite{MoNa96},
Matsuyama \cite{Mat02},
and
Ueda \cite{Ue16}.

On the other hand,
for \eqref{dwx:lin} with
$\Omega = \mathbb{R}^n$,
from the result by
Matsumura \cite{Ma77},
we see that if
$u_0, u_1 \in C_0^{\infty}(\mathbb{R}^n)$
and
$a(x) \ge C \langle x \rangle^{-1}$,
then
$E_L(t)$
decays to
zero as $t \to \infty$
(see also Uesaka \cite{Ue79}).
These results indicate that
for the damping coefficient
$a(x) = \langle x \rangle^{-\alpha}$,
the value $\alpha = 1$
is critical for the energy decay or non-decay.

Regarding the precise decay rate of the solution to
\eqref{dwx:lin},
Todorova and Yordanov \cite{ToYo09}
proved that if
$\Omega = \mathbb{R}^n$,
$a(x)$ is positive, radial and satisfies
$a(x) = a_0 |x|^{-\alpha} + o(|x|^{-\alpha})
\ (|x|\to \infty)$
with some $\alpha \in [0,1)$,
and the initial data has compact support,
then the solution satisfies
\begin{align}
    (1+t) E_L(t) + \int_{\mathbb{R}^n} a(x) |u(t,x)|^2 \,dx
    &\le
    C (1+t)^{-\frac{n-\alpha}{2-\alpha}+\delta}
    \left(
    \| u_0 \|_{H^1} + \| u_1 \|_{L^2}
    \right)^2,
\end{align}
where
$\delta > 0$ is arbitrary constant and
$C$ depends on $\delta$ and the support of the data.
We note that if we formally take
$\alpha = 0$ and $\delta = 0$,
then the decay rate coincides with that of
\eqref{eq:est:en:Mat}.
The proof of \cite{ToYo09} is based on
the weighted energy method with the weight function
\begin{align}
    t^{-\frac{n-\alpha}{2-\alpha}+2\delta}
    \exp \left(
    - \left( \frac{n-\alpha}{2-\alpha} - \delta \right) \frac{A(x)}{t}
    \right),
\end{align}
where $A(x)$ is a solution of the Poisson equation
$\Delta A(x) = a(x)$.
Such weight functions were firstly introduced by
Ikehata and Tanizawa \cite{IkTa05} and Ikehata \cite{Ik05IJPAM} for damped wave equations.
Some generalizations of the principal part to
variable coefficients were made by
Radu, Todorova, and Yordanov \cite{RaToYo09, RaToYo10}.
The assumption of the radial symmetry of
$a(x)$ was relaxed by
Sobajima and the author \cite{SoWa17_AIMS}.
Moreover, in \cite{SoWa19_CCM, SoWa21_JMSJ},
the compactness assumption on the support of 
the initial data was removed and
polynomially decaying data were treated.
The point is the use of
a suitable supersolution of the corresponding
heat equation
\begin{align}
    a(x) \partial_t v - \Delta v = 0
\end{align}
having polynomial order in the far field.
This approach is also a main tool in this paper.
For the diffusion phenomenon,
we refer the reader to
\cite{JoRo18, Nis16, RaToYo11, RaToYo16, SoWa16_JDE, SoWa18_ADE,Wa14JHDE}.

When the damping coefficient is critical
for the energy decay,
the situation becomes more delicate.
Ikehata, Todorova, and Yordanov \cite{IkToYo13}
studied \eqref{dwx:lin} in the case where
$\Omega = \mathbb{R}^n \ (n\ge 3)$,
$a(x)$
satisfies
$a_0 \langle x \rangle^{-1}
\le a(x) \le a_1 \langle x \rangle^{-1}$
with some
$a_0, a_1 > 0$,
and the initial data has compact support.
They obtained the decay estimates
\begin{align}
    E_L(t) &= \begin{cases}
    O(t^{-a_0}) &(1<a_0<n),\\
    O(t^{-n+\delta}) &(a_0 \ge n)
    \end{cases}
\end{align}
as $t \to \infty$
with arbitrary small $\delta > 0$.
This indicates that the decay rate depends on
the constant $a_0$.
Similar results in the lower dimensional cases
and the optimality of the above estimates
under additional assumptions
were also obtained in \cite{IkToYo13}.

We also mention that
$a(x)$ is not necessarily positive everywhere.
It is known
that the so-called geometric control condition
(GCC)
introduced by
Rauch and Taylor \cite{RauTa74}
and Bardos, Lebeau, and Rauch \cite{BaLeRa92}
is sufficient for the energy decay
of solutions with initial data in the energy space.
For the problem \eqref{dwx:lin} with
$\Omega = \mathbb{R}^n$,
(GCC) is read as follows:
There exist constants
$T > 0$ and $c > 0$ such that
for any
$(x_0, \xi_0) \in \mathbb{R}^n \times \mathbb{S}^{n-1}$,
we have
\begin{align}
    \frac{1}{T} \int_0^T a(x_0+s\xi_0) \,ds \ge c.
\end{align}
For this and related topics,
we refer the reader to
\cite{AlIbKh15, BuJo16, Da16, Ik03JDE, KaNaSo04, Nakao01MathZ, Nis09, Nis16, Zu90}.
We note that for
$a(x) = \langle x \rangle^{-\alpha}$
with $\alpha > 0$,
(GCC) is not fulfilled.

We note that
for the linear wave equation with time-dependent damping
\begin{align}
    \partial_t^2 u - \Delta u + b(t) \partial_t u = 0,
\end{align}
the asymptotic behavior of the solution
can be classified depending on
the behavior of $b(t)$.
See
\cite{Wi04, Wid, Wi06, Wi07JDE, Wi07ADE, Ya06}.

Thirdly, we consider the semilinear problem
\begin{align}\label{dw:nonlin}
    \left\{ \begin{array}{ll}
        \partial_t^2 u - \Delta u + \partial_t u = f(u), &t>0, x \in \Omega, \\
        u(t,x) = 0,
        &t>0, x\in \partial \Omega,\\
        u(0,x) = u_0(x), \ 
        \partial_t u(0,x) = u_1(x),
        &x \in \Omega.
        \end{array} \right.
\end{align}
When
$f(u) = |u|^{p-1}u$ or $\pm |u|^p$
with $p>1$,
the nonlinearity works as a sourcing term
and it may cause the singularity of the solution
in a finite time.
In this case, it is known that
there exists the critical exponent
$p_F(n) = 1+ \frac{2}{n}$,
that is,
if
$p > p_F(n)$,
then \eqref{dw:nonlin}
admits the global solution for small initial data;
if
$p < p_F(n)$,
then the solution may blow up in finite time
even for the small initial data.
The number
$p_F(n)$
is the so-called Fujita critical exponent
named after the pioneering work by
Fujita \cite{Fu66} for the semilinear heat equation.

When $\Omega = \mathbb{R}^n$
and $f(u) = \pm |u|^p$,
Todorova and Yordanov \cite{ToYo01}
determined the critical exponent
for compactly supported initial data.
Later on,
Zhang \cite{Zh01} and 
Kirane and Qafsaoui \cite{KiQa02} proved that
the critical case
$p = p_F(n)$
belongs to the blow-up case.

There are many improvements and related studies
to the results above.
The compactness assumption of the support of the
initial data were removed by
\cite{HaKaNa04DIE, IkeInWa17, IkeInOkWa19, IkTa05, NaNi08}.
The diffusion phenomenon for the global solution
was proved by
\cite{GaRa98, HaKaNa04DIE, KaUe13, KaTa16}.
The case where
$\Omega$
is the half space or the exterior domain
was studied by
\cite{IkeSo19, IkeTaWa, Ik03JMAA, Ik05JMAA1, OgTa09, On03JMAA, So19DIE}
Also, estimates of lifespan for blowing-up solutions were obtained by
\cite{LaZh19, LiZh95, Ni03Ib, IkeWa15, IkeOg16, IkeSo19, IkeSo19JMAA}.

When
$f(u) = |u|^{p-1}u$,
the global existence part
can be proved completely the same way
as in the case $f(u) = \pm |u|^p$.
However, regarding the blow-up of solutions,
the same proof as before
works only for $n \le 3$,
since the fundamental solution of
the linear damped wave equation
is not positive for $n \ge 4$,
which follows from the explicit formula
of the linear wave equation
(see e.g., \cite[p.1011]{SaWa17}).
Ikehata and Ohta \cite{IkOh02}
obtained the blow-up of solutions
for the subcritical case
$p < p_F(n)$.
The critical case
$p = p_F(n)$
with $n \ge 4$
seems to remain open.

When $f(u) = -|u|^{p-1}u$
with $p > 1$,
the nonlinearity works as an absorbing term.
In this case with
$\Omega = \mathbb{R}^n$,
Kawashima, Nakao, and Ono \cite{KaNaOn95}
proved the large data global existence.
Moreover, decay estimates of solutions were obtained for
$p > 1+\frac{4}{n}$.
Later on,
Nishihara and Zhao \cite{NiZh06} and
Ikahata, Nishihara, and Zhao \cite{IkNiZh06}
studied the case
$1 < p \le 1 + \frac{4}{n}$.
From their results,
we have the energy estimate
\begin{align}
    (1+t) E[u](t)
    + \| u(t) \|_{L^2}^2
    &\le
    C(I_0) (1+t)^{-2\left(\frac{1}{p-1}-\frac{n}{4} \right)},
\label{eq:est:IkNiZh}
\end{align}
where
\begin{align}
    I_0 :=
    \int_{\mathbb{R}^n}
    \left( |u_1(x)|^2 + |\nabla u_0(x)|^2
    + |u_0(x)|^{p+1} + |u_0(x)|^2 \right)
    \langle x \rangle^{2m} \,dx,
    \quad
    m > 2\left( \frac{1}{p-1} - \frac{n}{4} \right)
\end{align}
and we recall that
$E[u](t)$
is defined by \eqref{eq:def:E}.
Also, the asymptotic behavior was discussed by
\cite{Ka00, Ham10, HaKaNa06, HaNa17JMAA, IkNiZh06, Ni06}.
There seems no result for
exterior domain cases.

Finally, we consider the semilinear problem
with space-dependent damping which is
slightly more general than \eqref{dwx}:
\begin{align}\label{dwx:nonlin}
    \left\{ \begin{array}{ll}
        \partial_t^2 u - \Delta u + a(x)\partial_t u = f(u), &t>0, x \in \Omega, \\
        u(t,x) = 0,
        &t>0, x\in \partial \Omega,\\
        u(0,x) = u_0(x), \ 
        \partial_t u(0,x) = u_1(x),
        &x \in \Omega.
        \end{array} \right.
\end{align}
When the nonlinearity works as a sourcing term,
we expect that there is the critical exponent
as in the case $a(x) \equiv 1$.
Indeed, in the case where
$\Omega = \mathbb{R}^n$,
$f(u) = \pm |u|^{p}$,
the initial data has compact support,
and
$a(x)$ is positive, radial, and satisfies
$a(x) = a_0 | x |^{-\alpha} + o(|x|^{-\alpha})
\ (|x| \to \infty)$
with
$\alpha \in [0,1)$,
Ikehata, Todorova, and Yordanov \cite{IkToYo09} determined the
critical exponent as
$p_F(n-\alpha) = 1+\frac{2}{n-\alpha}$.
The estimate of lifespan for blowing-up solutions was obtained in
\cite{IkeSo19, IkeWa15}.
The blow-up of solutions for the case
$f(u) = |u|^{p-1}u$
seems to be an open problem.

Recently, Sobajima \cite{So_pre}
studied the critical damping case
$a(x) = a_0 |x|^{-1}$
in an exterior domain
$\Omega$
with
$n \ge 3$,
and proved the small data global existence
of solutions under the conditions
$a_0 > n-2$
and
$p > 1+ \frac{4}{n-2+\min\{n,a_0\}}$.
The blow-up part was investigated by
\cite{IkeSo21FE, Li15, So_pre}.
In particular, when
$\Omega$
is the outside a ball with
$n\ge 3$,
$a_0 \ge n$,
and
$f(u) = \pm |u|^p$,
the critical exponent is determined as
$p = p_F(n-1)$.
Moreover, in Ikeda and Sobajima \cite{IkeSo21FE},
the blow-up of solutions was obtained for
$\Omega = \mathbb{R}^n \ (n\ge 3)$,
$0\le a_0 <\frac{(n-1)^2}{n+1}$,
$f(u) = \pm |u|^p$ with
$\frac{n}{n-1} < p \le p_S(n+a_0)$,
where
$p_S(n)$
is the positive root of the quadratic equation
\begin{align}
    2 + (n+1)p - (n-1) p^2 = 0
\end{align}
and is the so-called Strauss exponent.
We remark that
$p_S(n+a_0) > p_F(n-1)$ holds if
$a_0 <\frac{(n-1)^2}{n+1}$.
From this, we can expect that the critical exponent
changes depending on the value $a_0$.

For the absorbing nonlinear term
$f(u) = - |u|^{p-1}u$
in the whole space case $\Omega = \mathbb{R}^n$
was studied by
Todorova and Yordanov \cite{ToYo07}
and Nishihara \cite{Ni10}.
In \cite{Ni10},
for compactly supported initial data,
the following two results were proved:

(i) If $a(x) = a_0 \langle x \rangle^{-\alpha}$
with some $a_0 > 0$ and $\alpha \in [0,1)$,
then we have
\begin{align}
    (1+t)E[u](t) + \int_{\mathbb{R}^n} a(x) |u(t,x)|^2 \,dx
    &\le
    C (1+t)^{-\frac{n-\alpha}{2-\alpha}+\delta}
\end{align}
with arbitrary small $\delta > 0$;

(ii) If
$a_0 \langle x \rangle^{-\alpha} \le a(x)
\le a_1 \langle x \rangle^{-\alpha}$
with some $a_0, a_1 > 0$ and $\alpha \in [0,1)$,
then we have
\begin{align}
    (1+t)E[u](t) + \int_{\mathbb{R}^n} a(x) |u(t,x)|^2 \,dx
    &\le
    C
    \begin{dcases}
    (1+t)^{-\frac{4}{2-\alpha}\left(
    \frac{1}{p-1} - \frac{n-\alpha}{4} \right)}
    &(p > p_{subc}(n,\alpha)),\\
    (1+t)^{-\frac{2}{p-1}} \log(2+t)
    &(p = p_{subc}(n,\alpha)),\\
    (1+t)^{-\frac{2}{p-1}}
    &(p<p_{subc}(n,\alpha)),
    \end{dcases}
\end{align}
where
\begin{align}\label{eq:def:psubc}
    p_{subc}(n,\alpha) := 1+\frac{2\alpha}{n-\alpha}.
\end{align}
We note that the decay rate in
(i) is the same as that of the linear problem
\eqref{dwx:lin}
and it is better than that of (ii) if
$p > p_F(n-\alpha)$.
This means
$p_F(n-\alpha)$ is critical in the sense
of the effect of the nonlinearity to
the decay rate of the energy.
Moreover, (ii) shows that
the second critical exponent
$p_{subc}(n,\alpha)$
appears and it
divides the decay rate of the 
energy.
We also note that the estimate
for the case
$p > p_{subc}(n,\alpha)$
corresponds to the estimate
\eqref{eq:est:IkNiZh}.
Thus, we may interpret the situation in the
following way:
When the damping is weak in the sense of
$a(x) \sim \langle x \rangle^{-\alpha}$
with
$\alpha \in (0,1)$,
we cannot obtain the same type energy estimate as in \eqref{eq:est:IkNiZh}
for all
$p > 1$,
and the decay rate becomes worse under or on
the second critical exponent
$p_{subc}(n,\alpha)$.
Our main goal in this paper is to give a generalization of
the results (i) and (ii) above.

In recent years,
semilinear wave equations with
time-dependent damping
have been intensively studied.
For the progress of this problem,
we refer the reader to
Sections 1 and 2 in
Lai, Schiavone, and Takamura \cite{LaScTa20}.
We also refer to
\cite{NiSoWa18} and the references therein
for a recent study of
semilinear wave equations with
time and space dependent damping.


To state our results, we define the solution.
\begin{definition}[Mild and strong solutions]\label{def:sol}
Let
$\mathcal{A}$
be the operator
\begin{align}
    \mathcal{A} =
    \begin{pmatrix} 0 & 1 \\ \Delta & -a(x) \end{pmatrix}
\end{align}
defined on
$\mathcal{H} := H_0^1(\Omega) \times L^2(\Omega)$
with the domain
$D(\mathcal{A}) = (H^2(\Omega) \cap H^1_0(\Omega)) \times H^1_0(\Omega)$.
Let
$U(t)$
denote the $C_0$-semigroup generated by
$\mathcal{A}$.
Let
$(u_0, u_1) \in \mathcal{H}$
and
$T \in (0,\infty]$.
A function
\begin{align}\label{eq:mildsol}
    u \in C([0,T); H^1_0(\Omega)) \cap C^1([0,T); L^2(\Omega))
\end{align}
is called a mild solution of \eqref{dwx} on
$[0,T)$
if
$\mathcal{U} = {}^t (u, \partial_t u)$
satisfies the integral equation
\begin{align}
    \mathcal{U}(t) = U(t)
    \begin{pmatrix} u_0 \\ u_1 \end{pmatrix}
    + \int_0^t U(t-s)
    \begin{pmatrix}
    0\\ - |u|^{p-1}u
    \end{pmatrix}
    \,ds
\end{align}
in
$C([0,T); \mathcal{H})$.
Moreover, when
$(u_0,u_1) \in D(\mathcal{A})$,
a function
\begin{align}
    u \in C([0,T); H^2(\Omega)) \cap C^1([0,T); H^1_0(\Omega)) \cap C^2([0,T); L^2(\Omega))
\end{align}
is said to be a strong solution of \eqref{dwx} on $[0,T)$ if
$u$
satisfies the equation of \eqref{dwx} in
$C([0,T); L^2(\Omega))$.
If $T = \infty$, we call $u$ a
global (mild or strong) solution.
\end{definition}

First, we prepare the existence and regularity of
the global solution.
\begin{proposition}\label{prop:ex}
Let
$\Omega = \mathbb{R}^n$
with $n \ge 1$,
or
$\Omega \subset \mathbb{R}^n$
with $n\ge 2$
be an exterior domain with $C^2$-boundary.
Let
$a(x) \in C(\mathbb{R}^n)$
be nonnegative and bounded.
Let
\begin{align}\label{p}
    1 < p < \infty \ (n=1,2),\quad
    1 < p \le \frac{n}{n-2} \ (n \ge 3),
\end{align}
and let
$(u_0, u_1) \in H^1_0(\Omega) \times L^2(\Omega)$.
Then, there exists a unique global mild solution
$u$ to \eqref{dwx}.
If we further assume
$(u_0, u_1) \in (H^2(\Omega) \cap H^1_0(\Omega)) \times H^1_0(\Omega)$,
then
$u$
becomes a strong solution to \eqref{dwx}.
\end{proposition}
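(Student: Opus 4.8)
The plan is to combine the abstract semigroup framework of Definition~\ref{def:sol} with a contraction-mapping argument for local existence, and then to upgrade this to a global solution through the energy identity established in the Introduction.

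First I would prove local well-posedness. Writing the nonlinearity as $F(u) = -|u|^{p-1}u$, the crucial analytic input is that $F$ is locally Lipschitz as a map $H^1_0(\Omega) \to L^2(\Omega)$. Starting from the pointwise bound $\bigl| |u|^{p-1}u - |v|^{p-1}v \bigr| \le C(|u|^{p-1}+|v|^{p-1})|u-v|$ and applying H\"older's inequality together with the Sobolev embedding $H^1_0(\Omega) \hookrightarrow L^{2n/(n-2)}(\Omega)$ (and any finite Lebesgue exponent when $n=1,2$), one sees that the restriction $p \le \frac{n}{n-2}$ for $n \ge 3$ is precisely what allows $\||u|^{p-1}\|_{L^n}$ to be controlled by $\|u\|_{H^1}$, yielding $\|F(u)-F(v)\|_{L^2} \le C(R)\|u-v\|_{H^1}$ on balls of radius $R$ in $H^1_0(\Omega)$. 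Inserting this into the Duhamel integral equation and invoking the Banach fixed-point theorem on a short time interval produces a unique mild solution on a maximal interval $[0,T_{\max})$, accompanied by the blow-up alternative that either $T_{\max}=\infty$ or $\|\mathcal{U}(t)\|_{\mathcal{H}} \to \infty$ as $t \to T_{\max}^-$.

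Next I would exclude finite-time blow-up by means of a priori estimates. The energy identity yields $E[u](t) \le E[u](0)$, controlling $\|\nabla u(t)\|_{L^2}$, $\|\partial_t u(t)\|_{L^2}$, and $\|u(t)\|_{L^{p+1}}$ uniformly in $t$. To close the full $H^1$-norm it only remains to bound $\|u(t)\|_{L^2}$; differentiating and using Cauchy--Schwarz gives
\begin{align}
    \frac{d}{dt}\|u(t)\|_{L^2}^2 = 2\int_{\Omega} u\,\partial_t u\,dx \le \|u(t)\|_{L^2}^2 + \|\partial_t u(t)\|_{L^2}^2,
\end{align}
so Gronwall's inequality forces $\|u(t)\|_{L^2}$ to grow at most exponentially and thus to remain finite on every bounded interval. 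Consequently $\|\mathcal{U}(t)\|_{\mathcal{H}}$ stays bounded on finite intervals, the second alternative is ruled out, and $T_{\max}=\infty$, so the mild solution is global.

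Finally, for the strong-solution claim with $(u_0,u_1) \in D(\mathcal{A})$, I would check that $F$ sends the solution into $C([0,T);H^1_0(\Omega))$, using $\nabla\bigl(|u|^{p-1}u\bigr) = p|u|^{p-1}\nabla u$ and estimating $\||u|^{p-1}\nabla u\|_{L^2}$ by H\"older and the Sobolev embedding applied to $u \in H^2(\Omega)$. With the Duhamel term thus taking values in spaces compatible with $D(\mathcal{A})$, the standard regularity theory for semilinear evolution equations (as in Pazy) promotes the mild solution to a strong one satisfying the equation in $C([0,T);L^2(\Omega))$. I expect the principal technical obstacle to be the endpoint case $p=\frac{n}{n-2}$, where the critical Sobolev embedding is used and both the Lipschitz estimate and the $H^1$-regularity of the nonlinearity require careful bookkeeping; by contrast the energy-based global bound is comparatively routine, owing precisely to the absorbing sign of $|u|^{p-1}u$ that renders the energy monotone.
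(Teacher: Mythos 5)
Your overall strategy coincides with the paper's (Appendix A): generate the semigroup for \eqref{ldwx}, run a contraction argument in $\mathcal{H}$ using Sobolev-type control of the nonlinearity (your H\"older split through $\||u|^{p-1}\|_{L^n}$ is equivalent to the paper's Gagliardo--Nirenberg bound $\|u\|_{L^{2p}}\le C\|u\|_{H^1}$, and both pin down exactly $p\le \frac{n}{n-2}$), establish the blow-up alternative, and rule out blow-up by the energy bound. However, there is a genuine gap at the decisive step: you apply the energy inequality $E[u](t)\le E[u](0)$ directly to the mild solution. A mild solution only has the regularity $u\in C([0,T);H^1_0(\Omega))\cap C^1([0,T);L^2(\Omega))$, so $\partial_t^2 u$ and $\Delta u$ need not exist in $L^2$, and the computation $\frac{d}{dt}E[u](t)=-\int_\Omega a(x)|\partial_t u|^2\,dx$ (differentiation under the integral plus integration by parts against the equation) is not justified at that level of regularity. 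Your Gronwall step for $\|u(t)\|_{L^2}^2$ is fine, since it only uses $u\in C^1([0,T);L^2)$; the unjustified object is the energy identity itself. The paper resolves this by proving, \emph{before} global existence, continuous dependence (Section \ref{sec:conti:dep}) and the regularity statement that $D(\mathcal{A})$-data give strong solutions (Section \ref{sec:regularity}); it then derives the energy bound for strong approximating solutions $u^{(j)}$, concludes these are global, and transfers the uniform bound to the mild solution by approximation (Section \ref{sec:approx}), contradicting the blow-up alternative of Section \ref{sec:bu}. In your ordering the regularity result comes last, so as written the global-existence argument rests on an unproved identity; the cure is to reorder and insert the approximation step.

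Two further remarks on your strong-solution step. First, as sketched it is circular: you estimate $\||u|^{p-1}\nabla u\|_{L^2}$ by H\"older and Sobolev ``applied to $u\in H^2(\Omega)$,'' but membership of $u(t)$ in $H^2$ is exactly what is to be proved. To make your route rigorous one must either run the fixed-point argument in $C([0,T];D(\mathcal{A}))$ — your H\"older computation does show that $v\mapsto (0,-|v|^{p-1}v)$ maps $D(\mathcal{A})$ into itself, even at the endpoint $p=\frac{n}{n-2}$, and one then needs a Gronwall bound on the $D(\mathcal{A})$-norm — or invoke the classical-solution theorem for continuously differentiable nonlinearities. Second, note that this is a genuinely different route from the paper's: there one shows via Gronwall that $t\mapsto\mathcal{U}(t)$ is Lipschitz in $\mathcal{H}$, hence that the forcing $t\mapsto -|u|^{p-1}u(t)$ is Lipschitz in time, i.e. lies in $W^{1,\infty}$, and then differentiates the Duhamel integral (following \cite{CaHa}); that is time-regularity of the forcing rather than space-regularity. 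Either route can be completed, but yours needs the bootstrapping idea above to avoid the circularity.
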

\begin{remark}
The assumption
$\partial \Omega \in C^2$
is used to ensure
$D(\mathcal{A}) = (H^2(\Omega) \cap H^1_0(\Omega)) \times H^1_0(\Omega)$
(see Cazenave and Haraux \cite[Remark 2.6.3]{CaHa}
and Brezis \cite[Theorem 9.25]{Br}).
The restriction of the range of $p$
in \eqref{p}
is due to the use of
Gagliardo--Nirenberg inequality
(see Section \ref{sec:A2}).
\end{remark}

The proof of Proposition \ref{prop:ex} is standard.
However, for reader's convenience,
we will give an outline of the proof
in the appendix.

To state our result,
we recall that
$E[u](t)$
and
$p_{subc}(n,\alpha)$
are defined by
\eqref{eq:def:E} and \eqref{eq:def:psubc},
respectively.
The main result of this paper reads as follows.
\begin{theorem}\label{thm1}
Let
$\Omega = \mathbb{R}^n$
with $n \ge 1$
or
$\Omega \subset \mathbb{R}^n$
with $n\ge 2$
be an exterior domain with $C^2$-boundary.
Let
$p$ satisfy \eqref{p}
and
$(u_0, u_1) \in H^1_0(\Omega) \times L^2(\Omega)$,
and let
$u$
be the corresponding global mild solution of \eqref{dwx}.
Then, the followings hold.
\begin{itemize}
\item[(i)]
Assume that
$a \in C(\mathbb{R}^n)$
is positive and satisfies
\begin{align}\label{assum:a}
    \lim_{|x|\to \infty} |x|^{\alpha} a(x) = a_0
\end{align}
with some constants
$\alpha \in [0,1)$
and
$a_0 > 0$.
Moreover, we assume that the initial data satisfy
\begin{align}
    &I_0[u_0,u_1] \\
    &:= 
    \int_{\Omega} \left[
    ( |u_1(x)|^2 + |\nabla u_0(x)|^2 + |u_0(x)|^{p+1} ) \langle x \rangle^{\alpha} + |u_0(x)|^2 \langle x \rangle^{-\alpha}
    \right] \langle x \rangle^{\lambda(2-\alpha)} \,dx \\
    &< \infty
\label{assum:ini}
\end{align}
with some
$\lambda \in [0,\frac{n-\alpha}{2-\alpha})$.
Then, we have
\begin{align}
    (1+t)E[u](t) + \int_{\Omega} a(x) |u(t,x)|^2 \,dx
    &\le
    C I_0[u_0,u_1] (1+t)^{-\lambda}
\end{align}
for $t\ge 0$
with some constant
$C = C(n,a,p,\lambda) > 0$.
\item[(ii)]
Assume that
$a \in C(\mathbb{R}^n)$
is positive and satisfies
\begin{align}
    a_0 \langle x \rangle^{-\alpha}
    \le a(x)
    \le a_1 \langle x \rangle^{-\alpha}
\end{align}
with some constants
$\alpha \in [0,1)$,
$a_0, a_1 > 0$.
Moreover, we assume that the initial data satisfy the condition
$I_0[u_0, u_1] < \infty$
with some
$\lambda \in [0,\infty)$,
where
$I_0[u_0,u_1]$
is defined by \eqref{assum:ini}.
Then, we have
\begin{align}
    &(1+t)E[u](t) + \int_{\Omega} a(x) |u(t,x)|^2 \,dx \\
    &\le C (I_0[u_0,u_1] + 1) \\
    &\quad \times
    \begin{cases}
    (1+t)^{-\lambda}
        &(\lambda < \min\{ \frac{4}{2-\alpha}(\frac{1}{p-1} - \frac{n-\alpha}{4} ), \frac{2}{p-1} \}),\\
    (1+t)^{-\lambda} \log (2+t)
        &(\lambda = \min\{ \frac{4}{2-\alpha}(\frac{1}{p-1} - \frac{n-\alpha}{4} ), \frac{2}{p-1} \}, \ p \neq p_{subc}(n,\alpha)),\\
    (1+t)^{-\lambda} (\log (2+t))^2
        &(\lambda = \frac{4}{2-\alpha}(\frac{1}{p-1} - \frac{n-\alpha}{4} ) = \frac{2}{p-1}, \ \mathrm{i.e.,} \ p = p_{subc}(n,\alpha)),\\
    (1+t)^{-\frac{4}{2-\alpha} ( \frac{1}{p-1} - \frac{n-\alpha}{4})}
        &(\lambda > \frac{4}{2-\alpha}(\frac{1}{p-1} - \frac{n-\alpha}{4} ),
        \ p > p_{subc}(n,\alpha)),\\
    (1+t)^{-\frac{2}{p-1}} \log (2+t)
        &(\lambda > \frac{2}{p-1},
        \ p = p_{subc}(n,\alpha)),\\
    (1+t)^{-\frac{2}{p-1}}
        &(\lambda > \frac{2}{p-1}, 
        \ p<p_{subc}(n,\alpha))
    \end{cases}
\end{align}
for $t \ge 0$
with some constant 
$C = C(n,a,p,\lambda) > 0$.
\end{itemize}
\end{theorem}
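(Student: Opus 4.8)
The plan is to run a weighted energy method with weights built from the solutions of the associated anisotropic heat equation $a(x)\partial_t\varphi=\Delta\varphi$, following the approach of Sobajima and the author for polynomially decaying data. Since the multiplier computations below require $H^2$-regularity, I would first prove the estimates for strong solutions (data in $D(\mathcal A)$ with $I_0[u_0,u_1]<\infty$) and then recover the general case of Proposition~\ref{prop:ex} by approximating $(u_0,u_1)$ both in $\mathcal H$ and in the weighted norm defining $I_0$, passing to the limit with the continuous dependence of $U(t)$ and Fatou's lemma. For the weights I invoke the functions from the appendix: a positive $A(x)$ with $\Delta A=a(x)$ in the far field and $A(x)\sim c_{n,\alpha,a_0}|x|^{2-\alpha}$, together with $\varphi_\beta(t,x)=(1+t)^{\beta}g_\beta\!\left(\frac{A(x)}{1+t}\right)$, where $g_\beta$ is the regular profile solving the Kummer-type ODE associated with the self-similar variable $\eta=A(x)/(1+t)$. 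The properties I use are: $\varphi_\beta(0,x)\le C\langle x\rangle^{\beta(2-\alpha)}$; the uniform lower bound $\varphi_\beta(t,x)\ge c(1+t)^{\beta}$; and the parabolic gradient bound $|\nabla\varphi_\beta|^2\le C\,a(x)\,\varphi_\beta^2/(1+t)$, reflecting the relation $|\nabla A|^2\sim\frac{2-\alpha}{n-\alpha}A\,a$. These hold precisely for $\beta<\frac{n-\alpha}{2-\alpha}$, which is the origin of the restriction on $\lambda$ in part (i).

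The engine of the proof is a pair of multiplier identities. Multiplying \eqref{dwx} by $\varphi_\beta\,\partial_t u$ and integrating (the Dirichlet condition annihilates the boundary terms, since $u=\partial_t u=0$ on $\partial\Omega$) yields
\begin{align}
&\frac{d}{dt}\int_\Omega\varphi_\beta\Big(\tfrac12|\partial_t u|^2+\tfrac12|\nabla u|^2+\tfrac1{p+1}|u|^{p+1}\Big)dx+\int_\Omega a\varphi_\beta|\partial_t u|^2\,dx \\
&\quad=\int_\Omega\Big(\tfrac12\partial_t\varphi_\beta(|\partial_t u|^2+|\nabla u|^2)+\tfrac{\partial_t\varphi_\beta}{p+1}|u|^{p+1}-\partial_t u\,\nabla\varphi_\beta\cdot\nabla u\Big)dx,
\end{align}
while multiplying by $\varphi_\beta u$ produces the coercive terms $\int_\Omega\varphi_\beta(|\nabla u|^2+|u|^{p+1})\,dx$ together with $\int_\Omega a\varphi_\beta u^2\,dx$ and a cross quantity $\int_\Omega\varphi_\beta u\,\partial_t u\,dx$. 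I then study the functional obtained by adding $(1+t)$ times the first identity to a small multiple of the second; the factor $(1+t)$ is exactly what places $(1+t)E[u](t)$ on the left of the conclusion. The exact-solution relation $a\partial_t\varphi_\beta=\Delta\varphi_\beta$ is used to reorganize the growth terms produced by $\partial_t\varphi_\beta$, and the gradient bound then controls the cross term by Young's inequality, $|\partial_t u\,\nabla\varphi_\beta\cdot\nabla u|\le\varepsilon\,a\varphi_\beta|\partial_t u|^2+C_\varepsilon\frac{|\nabla\varphi_\beta|^2}{a\varphi_\beta}|\nabla u|^2$, whose last factor is $\lesssim\varphi_\beta/(1+t)$ and so, after multiplication by $1+t$, is absorbed by the coercive term $\int\varphi_\beta|\nabla u|^2$ from the second multiplier. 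Absorbing everything into the dissipation $\int a\varphi_\beta|\partial_t u|^2$ and the nonnegative nonlinear term, I obtain a differential inequality forcing the weighted functional to stay bounded by $C\,I_0[u_0,u_1]$. Dividing by $\varphi_\beta\ge c(1+t)^{\beta}$ and choosing $\beta=\lambda<\frac{n-\alpha}{2-\alpha}$ gives part (i); here the absorbing nonlinearity is used only as a harmless good sign.

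For part (ii) the rate requested by the data may exceed the linear threshold $\frac{n-\alpha}{2-\alpha}$, so the nonnegative term $\int_\Omega\varphi_\beta|u|^{p+1}\,dx$ must be exploited quantitatively. Starting from the largest admissible weight ($\beta$ slightly below the threshold) I run a bootstrap: the current decay of the weighted $L^2$- and energy norms is fed, through the Gagliardo--Nirenberg inequality in the range \eqref{p}, into the nonlinear term, improving the exponent at each step. The iteration saturates at whichever of the two intrinsic nonlinear rates is smaller, namely $\frac{4}{2-\alpha}\big(\frac1{p-1}-\frac{n-\alpha}{4}\big)$ from the diffusive balance and $\frac{2}{p-1}$ from the ODE $y'\lesssim-y^{(p+1)/2}$ for the spatially integrated norm; comparing these two with $\lambda$ produces the six cases. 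The logarithmic factors are the signature of the borderline integrations: when $\lambda$ equals the governing rate one integrates $\int^t s^{-1}\,ds=\log t$, and when $p=p_{subc}(n,\alpha)$ the two nonlinear rates coincide with $\lambda$ simultaneously, so the two logarithmic losses compound into $(\log(2+t))^2$.

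The step I expect to be the main obstacle is closing the differential inequality when the damping degenerates at spatial infinity: the term $\int\varphi_\beta|\partial_t u|^2$ generated by the second multiplier is \emph{not} dominated pointwise by the dissipation $\int a\varphi_\beta|\partial_t u|^2$, and its control hinges delicately on the parabolic relation $|\nabla A|^2\sim\frac{2-\alpha}{n-\alpha}A\,a$ and on the exact-solution structure of $\varphi_\beta$ recorded in the appendix's weight estimates; this is also what pins the threshold to $\frac{n-\alpha}{2-\alpha}$. The second delicate point is the bookkeeping in the bootstrap of part (ii): one must track the constants and thresholds precisely so that the iteration terminates at the stated rates and reproduces the single and double logarithmic corrections exactly at the critical configurations.
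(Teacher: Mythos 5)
Your part (i) is, in outline, exactly the paper's own proof: weights built from an approximate solution $A_\varepsilon$ of $\Delta A = a$ composed with Kummer-type self-similar profiles, the two multipliers (weight times $\partial_t u$ and weight times $u$), the combination of $(1+t)$ times the first identity with a small multiple of the second, absorption of the cross term via $|\nabla A_\varepsilon|^2 \lesssim \frac{2-\alpha}{n-\alpha}a A_\varepsilon$, and the threshold $\lambda<\frac{n-\alpha}{2-\alpha}$ coming from the admissible range of the profile. One small correction: the paper does not use exact solutions of $a\partial_t\varphi=\Delta\varphi$ but supersolutions, since $A_\varepsilon$ only satisfies $(1-\varepsilon)a\le \Delta A_\varepsilon\le(1+\varepsilon)a$; the relevant property is $a\partial_t\Phi_{\beta,\varepsilon}-\Delta\Phi_{\beta,\varepsilon}\ge c\,a\,\Psi^{-\beta-1}$ (Proposition \ref{prop:super-sol}(iv)), and your argument should be phrased with this inequality rather than an identity. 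This is cosmetic; part (i) stands.

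Part (ii) contains a genuine gap. Your proposed bootstrap --- start from the largest Kummer-admissible weight and iteratively improve the rate by feeding decay into the nonlinear term via Gagliardo--Nirenberg --- is not a proof as stated: you do not specify what functional is iterated, what the quantitative gain per step is, how the iteration gets past the threshold $\frac{n-\alpha}{2-\alpha}$ that constrains the very weights you propose to start from, or why it terminates exactly at $\min\{\frac{4}{2-\alpha}(\frac{1}{p-1}-\frac{n-\alpha}{4}),\frac{2}{p-1}\}$ with single and double logarithms precisely at the borderline configurations. The paper's mechanism is different and is the missing idea: for (ii) it abandons the Kummer weights entirely and uses the elementary weight $\Theta(x,t;t_0)=t_0+t+\langle x\rangle^{2-\alpha}$, for which the multiplier-$u$ identity (Lemma \ref{lem:2:E0}) holds for \emph{every} $\lambda\ge 0$ with no threshold, at the price of the uncontrolled term $\int_\Omega a|u|^2\Theta^{\lambda-1}\,dx$. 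The decisive step (Lemma \ref{lem:Etilde}) is Young's inequality
\begin{align}
    a(x)|u|^2\Theta^{\lambda-1}
    \le \frac12\,|u|^{p+1}\Theta^{\lambda}
    + C\,a(x)^{\frac{p+1}{p-1}}\Theta^{\lambda-\frac{p+1}{p-1}},
\end{align}
in which the first term is absorbed by the coercive absorbing term, and the remainder is a \emph{solution-independent} function whose spacetime integral is computed explicitly (Lemma \ref{lem:2:badterm}) by splitting $\Omega$ into $\{\langle x\rangle^{2-\alpha}\le t_0+s\}$ and its complement. That single elementary integral --- not an infinite iteration --- is what produces all six cases, the $\min$, the $\log(2+t)$, and the $(\log(2+t))^2$ (the latter from $\int_0^t (t_0+s)^{-1}\log(t_0+s)\,ds$ when $p=p_{subc}(n,\alpha)$ and $\lambda=\frac{2}{p-1}$). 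It also explains why the constant in (ii) has the form $C(I_0[u_0,u_1]+1)$ rather than $CI_0[u_0,u_1]$: the additive $1$ is the contribution of this solution-independent remainder, a feature your bootstrap would not naturally reproduce. Unless you can supply the full iteration scheme with verified gains and borderline bookkeeping, part (ii) should be replaced by (or reduced to) this absorption-plus-explicit-integral argument.
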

\begin{remark}
Under the assumptions of (i),
the both conclusions of (i) and (ii) are true.
In
Figure \ref{fig1},
the decay rates of
$\displaystyle \int_{\Omega} a(x) |u(t,x)|^2 \,dx$
is classified in the case
$(n,\alpha) = (3,0.5)$
(for ease of viewing, the figure is multiplied by $7$ and $0.75$ in the horizontal and vertical axis, respectively).
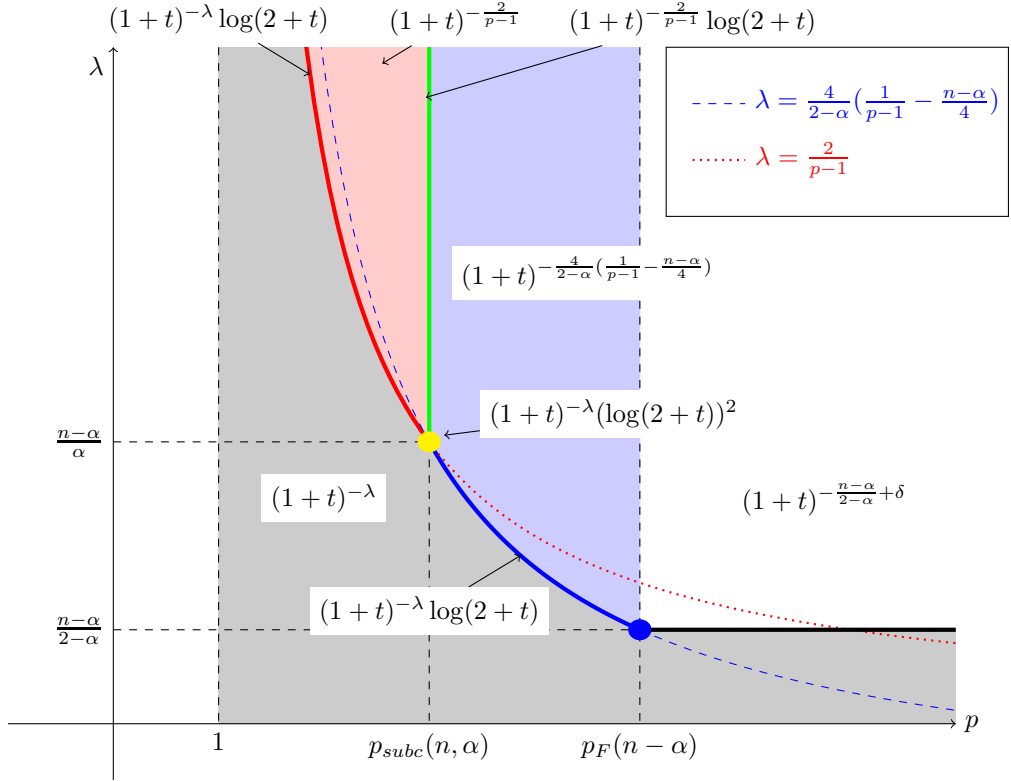
\begin{figure}[htbp]
\begin{center}
\begin{tikzpicture}[xscale=7,yscale=0.75]
\clip (0.6,-1) rectangle (2.5,13);

\fill[color=black!20] (1+2/2.5,2.5/1.5)--(2.4,2.5/1.5)--(2.4,0)--(1+2/2.5,0)--cycle;

\begin{scope} 
\clip (0.6,-2) rectangle (2.5,12);
\filldraw[color=black!20, domain=1.05:1+1/2.5]
plot (\x, {2/(\x-1)}) --(1+1/2.5,0)--(1,0)--(1,12)--cycle; 

\filldraw[color=red!20, domain=1.05:1+1/2.5]
plot (\x, {2/(\x-1)}) --(1+1/2.5,12)--cycle; 

\draw[red,ultra thick,draw,domain=1.05:1+1/2.5] plot (\x, {2/(\x-1)}); 

\filldraw[color=black!20, domain=1+1/2.5:1+2/2.5] plot (\x, {4/(1.5*(\x-1))-2.5/1.5}) --(1+2/2.5,0)--(1+1/2.5,0)--cycle; 
\filldraw[color=blue!20, domain=1+1/2.5:1+2/2.5] plot (\x, {4/(1.5*(\x-1))-2.5/1.5}) --(1+2/2.5,12)--(1+1/2.5,12)--cycle; 
\draw[samples=50,blue,ultra thick,draw,domain=1+1/2.5:1+2/2.5] plot (\x, {4/(1.5*(\x-1))-2.5/1.5}); 

\draw[samples=50,blue,dashed,draw,domain=1.1:1+1/2.5] plot (\x, {4/(1.5*(\x-1))-2.5/1.5}); 
\draw[samples=50,blue,dashed,draw,domain=1+2/2.5:2.4] plot (\x, {4/(1.5*(\x-1))-2.5/1.5}); 
\draw[samples=50,red,thick,dotted,draw,domain=1+1/2.5:2.4] plot (\x, {2/(\x-1)}); 

\draw[dashed] (0.8,2.5/1.5) node[left] {$\frac{n-\alpha}{2-\alpha}$} --(1+2/2.5,2.5/1.5); 
\draw[ultra thick] (1+2/2.5,2.5/1.5)--(2.4,2.5/1.5); 

\draw[dashed] (0.8,5) node[left] {$\frac{n-\alpha}{\alpha}$}--(1+1/2.5,5); 

\draw[dashed] (1,12)--(1,0) node[below] {$1$}; 
\draw[dashed] (1+1/2.5,12)--(1+1/2.5,0) node[below]  {$p_{subc}(n,\alpha)$}; 
\draw[dashed] (1+2/2.5,12)--(1+2/2.5,0) node[below] {$p_F(n-\alpha)$}; 

\filldraw[blue] (1+2/2.5,2.5/1.5) circle [x radius = 0.6pt, y radius = 5pt]; 

\draw[ultra thick,green] (1+1/2.5,5)--(1+1/2.5,12); 
\filldraw[yellow]  (1+1/2.5,5) circle [x radius = 0.6pt, y radius = 5pt]; 

\draw[->] (0.6,0)--(2.4,0) node[right] {$p$}; 
\draw[->] (0.8,-1)--(0.8,12) node[below left] {$\lambda$}; 
\end{scope}

\node[fill=white] at (1.2,4) {$(1+t)^{-\lambda}$}; 
\node[fill=white] at (1.7,8) {$(1+t)^{-\frac{4}{2-\alpha}(\frac{1}{p-1}-\frac{n-\alpha}{4})}$}; 
\node[fill=white] (redreg) at (1.45,12.5) {$(1+t)^{-\frac{2}{p-1}}$}; 
\draw[->] (redreg)--++(-100:0.8);
\node[fill=white] (redlin) at (1,12.5) {$(1+t)^{-\lambda} \log(2+t)$}; 
\draw[->] (redlin)--++(-80:1);
\node[fill=white] (blulin) at (1.4,2) {$(1+t)^{-\lambda} \log(2+t)$}; 
\draw[->] (blulin)--++(80:1);
\node[fill=white] (greenlin) at (1.9,12.5) {$(1+t)^{-\frac{2}{p-1}} \log(2+t)$}; 
\draw[->] (greenlin)--++(-110:1.48);
\node[fill=white] (yelpt) at (1.75,5.5) {$(1+t)^{-\lambda}(\log(2+t))^2$}; 
\draw[->] (yelpt)--++(-130:0.5);
\node at (2.15,4) {$(1+t)^{-\frac{n-\alpha}{2-\alpha}+\delta}$}; 

\draw[blue, dashed] (1.9,11)--++(0:0.1)
node[right] {$\lambda=\frac{4}{2-\alpha}(\frac{1}{p-1}-\frac{n-\alpha}{4})$};
\draw[red,thick,dotted] (1.9,10)--++(0:0.1) node[right] {$\lambda=\frac{2}{p-1}$};
\draw (1.85,9) rectangle (2.5,12);

\end{tikzpicture}
\caption{Classification of decay rates in $p$\,-\,$\lambda$ plane when
$(n,\alpha)=(3,\frac{1}{2})$}
\label{fig1}
\end{center}
\end{figure}
\if0
\begin{itemize}
\item
In the region
$\lambda < \min\{ \frac{2}{p-1}, \frac{4}{2-\alpha}(\frac{1}{p-1}-\frac{n-\alpha}{4}), \frac{n-\alpha}{2-\alpha} \}$
(gray colored region),
the decay rate is
$(1+t)^{-\lambda}$.
\item
In the region
$p > p_F(n-\alpha)$
and
$\lambda \ge \frac{n-\alpha}{2-\alpha}$,
the decay rate is
$(1+t)^{-\frac{n-\alpha}{2-\alpha}+\delta}$
with arbitrary small loss
$\delta > 0$.
\item
In the region
$p_{subc}(n,\alpha)<p \le p_F(n-\alpha)$
and
$\lambda > \frac{4}{2-\alpha}(\frac{1}{p-1}-\frac{n-\alpha}{4})$
(blue colored region),
the decay rate is
$(1+t)^{-\frac{4}{2-\alpha}(\frac{1}{p-1}-\frac{n-\alpha}{4})}$.
\item
On the curve
$\lambda = \frac{4}{2-\alpha}(\frac{1}{p-1}-\frac{n-\alpha}{4})$
with
$p_{subc}(n,\alpha) < p \le p_F(n-\alpha)$
(blue colored curve),
the decay rate is
$(1+t)^{-\lambda} \log(2+t)$.
\item
In the region
$1<p<p_{subc}(n,\alpha)$
and
$\lambda > \frac{2}{p-1}$
(red colored region),
the decay rate is
$(1+t)^{-\frac{2}{p-1}}$.
\item
On the curve
$\lambda = \frac{2}{p-1}$
with
$1<p<p_{subc}(n,\alpha)$
(red colored curve),
the decay rate is
$(1+t)^{-\lambda} \log(2+t)$.
\item
On the curve
$p = p_{subc}(n,\alpha)$
with $\lambda > \frac{n-\alpha}{\alpha}$
(green colored line),
the decay rate is
$(1+t)^{-\frac{2}{p-1}} \log (2+t)$.
\item
At the point
$(p,\lambda) = (p_{subc}(n,\alpha), \frac{n-\alpha}{\alpha})$
(yellow colored point),
the decay rate is
$(1+t)^{-\lambda} (\log (2+t))^2$
\end{itemize}
\fi
\end{remark}
\begin{remark}
From the proof of the above theorem, we also have
the following estimates for
the $L^2$-norm of $u$ without the weight $a(x)$:
Under the assumptions on (i) with
$\lambda \in
[\frac{\alpha}{2-\alpha},
\frac{n-\alpha}{2-\alpha})$,
we have
\begin{align}
    \int_{\Omega} |u(t,x)|^2 \,dx
    \le C (1+t)^{-\lambda + \frac{\alpha}{2-\alpha}}
\end{align}
for $t > 0$;
Under the assumptions on (ii) with
$\lambda \in [\frac{\alpha}{2-\alpha}, \infty)$,
we have
\begin{align}
    &\int_{\Omega} |u(t,x)|^2 \,dx \\
    &\le C
    \begin{cases}
    (1+t)^{-\lambda+ \frac{\alpha}{2-\alpha}}
        &(\lambda < \min\{ \frac{4}{2-\alpha}(\frac{1}{p-1} - \frac{n-\alpha}{4} ), \frac{2}{p-1} \}),\\
    (1+t)^{-\lambda+ \frac{\alpha}{2-\alpha}} \log (2+t)
        &(\lambda = \min\{ \frac{4}{2-\alpha}(\frac{1}{p-1} - \frac{n-\alpha}{4} ), \frac{2}{p-1} \}, \ p \neq p_{subc}(n,\alpha)),\\
    (1+t)^{-\lambda+ \frac{\alpha}{2-\alpha}} (\log (2+t))^2
        &(\lambda = \frac{4}{2-\alpha}(\frac{1}{p-1} - \frac{n-\alpha}{4} ) = \frac{2}{p-1}, \ \mathrm{i.e.,} \ p = p_{subc}(n,\alpha)),\\
    (1+t)^{-\frac{4}{2-\alpha} ( \frac{1}{p-1} - \frac{n-\alpha}{4})+ \frac{\alpha}{2-\alpha}}
        &(\lambda > \frac{4}{2-\alpha}(\frac{1}{p-1} - \frac{n-\alpha}{4} ),
        \ p > p_{subc}(n,\alpha)),\\
    (1+t)^{-\frac{2}{p-1}+ \frac{\alpha}{2-\alpha}} \log (2+t)
        &(\lambda > \frac{2}{p-1},
        \ p = p_{subc}(n,\alpha)),\\
    (1+t)^{-\frac{2}{p-1}+ \frac{\alpha}{2-\alpha}}
        &(\lambda > \frac{2}{p-1}, 
        \ p<p_{subc}(n,\alpha))
    \end{cases}
\end{align}
for $t>0$.
\end{remark}
\begin{remark}
(i)
Theorem \ref{thm1} generalizes
the result of Nishihara \cite{Ni10}
to
the exterior domain,
general damping coefficient
$a(x)$ satisfying \eqref{assum:a},
and
polynomially decaying initial data
satisfying \eqref{assum:ini}.

(ii) For the simplest case
$\Omega = \mathbb{R}^n$
and
$a(x) \equiv 1$,
the result of Theorem \ref{thm1} (ii)
extends that of 
Ikehata, Nishihara, and Zhao \cite{IkNiZh06},
in the sense that our estimate in the region
$\lambda > 2\left(\frac{1}{p-1}-\frac{n}{4}\right)$
coincides with their estimate
\eqref{eq:est:IkNiZh}.
Moreover, the result of
Theorem \ref{thm1} (i) in the case
$p > p_F(n)$
is better than
the estimate obtained in \cite{IkNiZh06}.
Hence, our result still has a novelty.
\end{remark}
\begin{remark}
The optimality of the decay rates
in Theorem \ref{thm1} is an open problem.
We expect that 
the estimate in the case (i)
is optimal if
$p > p_F(n-\alpha) = 1+ \frac{2}{n-\alpha}$,
since the decay rate is the same as that of
the linear problem \eqref{dwx:lin}
obtained by \cite{SoWa21_JMSJ}.
On the other hand, in the critical case
$p = p_F(n-\alpha)$,
the estimates in Theorem \ref{thm1}
will be improved in view of the known results
\cite{HaKaNa06, HaNa17JMAA}
for the classical damping \eqref{dw:nonlin} in the whole space.
Moreover, the optimality in the
subcritical case
$p < p_F(n-\alpha)$
is a difficult problem even when
$a(x) \equiv 1$ and $\Omega = \mathbb{R}^n$,
and we have no idea so far.
\end{remark}

The strategy of the proof of Theorem \ref{thm1}
is as follows.
For the both parts (i) and (ii),
we apply the weighted energy method.
The difficulty is how to estimate
the weighted
$L^2$-norm of the solution.
To overcome it, we take different approaches
for (i) and (ii).
First, for the part (i), we apply
the weighted energy method developed by
\cite{SoWa19_CCM, SoWa21_JMSJ}.
We shall use a suitable supersolution of
the corresponding heat equation
$a(x) \partial_t v -\Delta v = 0$
as the weight function.
Next, for the part (ii),
we shall use the same type weight function as in
Ikehata, Nishihara, and Zhao \cite{IkNiZh06}
with a modification to fit the space-dependent 
damping case.
In this case the absorbing semilinear term
helps to estimate
the weighted $L^2$-norm of the solution.

The rest of the paper is organized
in the following way.
In the next section,
we prepare the definitions and properties of the weight functions used in the proof.
Sections 3 and 4 are devoted to the proof of
Theorem \ref{thm1} (i) and (ii), respectively.
In Appendix A, we give a proof of
Proposition \ref{prop:ex}.
Finally, in Appendix B,
we prove the properties of weight functions
stated in Section 2.

We end up this section with
introducing notations used throughout this paper.
The letter $C$ indicates a generic positive constant, which may change from line to line.
In particular,
$C(\ast,\cdots,\ast)$
denotes a constant depending only on
the quantities in the parentheses.
For $x = (x_1,\ldots,x_n) \in \mathbb{R}^n$,
we define 
$\langle x \rangle = \sqrt{1+|x|^2}$.
We sometimes use
$B_{R}(x_0) = \{ x \in \mathbb{R}^n ;\,
|x-x_0| < R\}$
for
$R>0$
and $x_0 \in \mathbb{R}^n$.

Let
$L^p(\Omega)$
be the usual Lebesgue space
equipped with the norm
\begin{align}
    \| f \|_{L^p}
    = \begin{dcases}
    \left( \int_{\Omega} |f(x)|^p \,dx \right)^{1/p}
    &(1<p<\infty),\\
    \esssup_{x\in \Omega} |f(x)|
    &(p=\infty).
    \end{dcases}
\end{align}
In particular,
$L^2(\Omega)$
is a Hilbert space with the innerproduct
\begin{align}
    (f,g)_{L^2}
    &:=
    \int_{\Omega} f(x) g(x) \,dx.
\end{align}
Let
$H^k(\Omega)$
with a nonnegative integer $k$
be the Sobolev space
equipped with
the innerproduct and the norm
\begin{align}
    (f,g)_{H^k}
    &=
    \sum_{|\alpha|\le k}
    (\partial^{\alpha} f,
    \partial^{\alpha} g )_{L^2},
    \quad
    \|f \|_{H^k}
    =
    \sqrt{(f,f)_{H^k}},
\end{align}
respectively.
$C_0^{\infty}(\Omega)$
denotes the space of
smooth functions on $\Omega$
with compact support.
$H_0^k(\Omega)$
is the completion of
$C_0^{\infty}(\Omega)$
with respect to the norm
$\| \cdot \|_{H^k}$.
For an interval $I \subset \mathbb{R}$,
a Banach space $X$, and a nonnegative integer $k$,
$C^k(I;X)$
stands for the space of
$k$-times continuously differentiable functions from $I$ to $X$.

\section{Preliminaries}
In this section,
we prepare
weight functions for the weighted energy method
used in the proof of Theorem \ref{thm1}.

These lemmas were shown in
\cite{So19DIE,SoWa17_AIMS,SoWa19_CCM,SoWa21_JMSJ},
however, for the convenience,
we give a proof of them in the appendix.

Following \cite{SoWa17_AIMS},
we first take a suitable approximate solution of
the Poisson equation
$\Delta A(x) = a(x)$,
which will be used for the construction
of the weight function.

\begin{lemma}[\cite{SoWa17_AIMS, SoWa21_JMSJ}]\label{lem:A:ep}
Assume that
$a(x) \in C(\mathbb{R}^n)$ is positive
and satisfies the condition
$\lim_{|x|\to \infty} |x|^{\alpha}a(x) = a_0$
with some constants
$\alpha\in (-\infty,\min\{2,n\})$
and $a_0 > 0$.
Let
$\varepsilon \in (0,1)$.
Then, there exist a function
$A_{\varepsilon} \in C^2(\mathbb{R}^n)$
and positive constants
$c = c(n,a,\varepsilon)$
and
$C=C(n,a,\varepsilon)$
such that for
$x \in \mathbb{R}^n$, we have
\begin{align}
\label{A1}
	&(1-\varepsilon)a(x)\le \Delta A_{\varepsilon} (x) \le (1+\varepsilon) a(x),\\
\label{A2}
	&c \langle x\rangle^{2-\alpha} \leq A_{\varepsilon} (x) \le C \langle x \rangle^{2-\alpha},\\
\label{A3}
	&\frac{|\nabla A_{\varepsilon} (x)|^2}{a(x)A_{\varepsilon}(x)} \le \frac{2-\alpha}{n-\alpha}+\varepsilon.
\end{align}
\end{lemma}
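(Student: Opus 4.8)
The plan is to construct $A_\varepsilon$ as the sum of an explicit radial \emph{model} term, which already carries the correct growth and the sharp constant in \eqref{A3}, and a \emph{correction} of strictly lower order obtained by solving Poisson's equation against a compactly supported source. The model is dictated by the exact radial solution of $\Delta A=a_0|x|^{-\alpha}$: setting
\[
  \psi(x):=\frac{a_0}{(2-\alpha)(n-\alpha)}\langle x\rangle^{2-\alpha},
\]
a direct computation gives $\Delta\psi(x)=a_0\langle x\rangle^{-\alpha}+\frac{a_0\alpha}{n-\alpha}\langle x\rangle^{-\alpha-2}$, so that $\Delta\psi(x)=a_0|x|^{-\alpha}(1+o(1))$ as $|x|\to\infty$. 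Moreover $\nabla\psi(x)=\frac{a_0}{n-\alpha}\langle x\rangle^{-\alpha}x$, whence $\psi(x)$ is comparable to $\langle x\rangle^{2-\alpha}$ and
\[
  \frac{|\nabla\psi(x)|^2}{a_0\langle x\rangle^{-\alpha}\,\psi(x)}=\frac{2-\alpha}{n-\alpha}\cdot\frac{|x|^2}{\langle x\rangle^2}\le\frac{2-\alpha}{n-\alpha};
\]
thus the model already realizes the sharp constant of \eqref{A3}, and the whole difficulty is to pass from the idealized coefficient $a_0|x|^{-\alpha}$ to the true $a$ without spoiling this.

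To do so I would fix $R=R(\varepsilon)$ so large that, by the hypothesis $\lim_{|x|\to\infty}|x|^\alpha a(x)=a_0$, both $a$ and $\Delta\psi$ lie within the factor $\varepsilon$ of one another on $\{|x|\ge R\}$. Choose a cutoff $\chi\in C_0^\infty(\mathbb R^n)$ with $\chi\equiv1$ on $B_R$ and $\supp\chi\subset B_{2R}$, and let $w$ be the Newtonian potential solving $\Delta w=\chi\,(a-\Delta\psi)$, whose source is bounded and supported in $B_{2R}$; finally set $A_\varepsilon:=\psi+w+K$ with a constant $K\ge0$ to be chosen. Then $\Delta A_\varepsilon=\chi a+(1-\chi)\Delta\psi$, which equals $a$ exactly on $B_R$ and is a convex combination of $a$ and $\Delta\psi$ on $\{|x|\ge R\}$; since both lie in the $\varepsilon$-sandwich there, \eqref{A1} holds on all of $\mathbb R^n$. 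To secure $A_\varepsilon\in C^2$ one first mollifies $a$ to a smooth $a_\delta$ with $|a_\delta-a|\le\varepsilon a$ on $B_{2R}$, which is possible because $a$ is continuous and positive; this keeps \eqref{A1} intact for the original $a$.

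The two remaining estimates rest on the growth of the correction. The standing assumption $\alpha<\min\{2,n\}$ is exactly what makes $w$ and $\nabla w$ of strictly lower order than $\psi$ and $\nabla\psi$: for $n\ge3$ one has $|w(x)|\le C\langle x\rangle^{2-n}$ and $|\nabla w(x)|\le C\langle x\rangle^{1-n}$, with the analogous $O(\log|x|)$, $O(1)$, or $O(|x|)$ growth in the low dimensions, all negligible against $\langle x\rangle^{2-\alpha}$ and $\langle x\rangle^{1-\alpha}$ precisely when $\alpha<\min\{2,n\}$. Hence $A_\varepsilon$ is comparable to $\langle x\rangle^{2-\alpha}$ and \eqref{A2} follows once $K$ is large enough to keep $A_\varepsilon$ positive. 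For \eqref{A3}, the far-field ratio $\frac{|\nabla A_\varepsilon|^2}{aA_\varepsilon}$ tends to the model value $\frac{2-\alpha}{n-\alpha}$, so it is $\le\frac{2-\alpha}{n-\alpha}+\varepsilon$ for $|x|\ge R'$ with some $R'$; on the compact region $|x|\le R'$ the numerator is bounded and $a$ is bounded below, so enlarging the additive constant $K$, which leaves $\nabla A_\varepsilon$ and $\Delta A_\varepsilon$ unchanged while increasing the denominator, drives the ratio below $\frac{2-\alpha}{n-\alpha}+\varepsilon$ there too, and since increasing $K$ can only decrease the ratio, the far-field bound is untouched.

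The main obstacle is the sharp constant in \eqref{A3}: the whole scheme works only because the explicit model saturates $\frac{2-\alpha}{n-\alpha}$ while the Poisson correction is asymptotically invisible, and quantifying the latter is exactly where the dimensional restriction $\alpha<\min\{2,n\}$ is spent. The additive-constant trick then disposes of the bounded region for free, and the only genuinely analytic inputs are the standard decay bounds for Newtonian potentials of compactly supported sources together with the mollification needed for $C^2$ regularity.
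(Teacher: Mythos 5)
Your construction is essentially identical to the paper's: the same explicit radial model $\frac{a_0}{(2-\alpha)(n-\alpha)}\langle x\rangle^{2-\alpha}$, the same cutoff-localized Newtonian-potential correction absorbing the discrepancy $a-\Delta\psi$ on a compact set (so that $\Delta A_\varepsilon$ becomes a convex combination of $a$ and $\Delta\psi$, giving \eqref{A1}), the same use of the decay of the potential and its gradient to get \eqref{A2} and the far-field part of \eqref{A3}, and the same large additive constant to settle both estimates on the remaining compact region. The only difference is your explicit mollification step to guarantee $C^2$ regularity of the Newtonian potential of a merely continuous source, a point the paper's proof passes over silently; otherwise the two arguments coincide step by step.
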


For the construction of our weight function,
we also need the following
Kummer's confluent hypergeometric function.
\begin{definition}[Kummer's confluent hypergeometric functions]
\label{def:M}
For
$b,c \in \mathbb{R}$ with $-c \notin \mathbb{N} \cup \{0\}$,
Kummer's confluent hypergeometric function of first kind is defined by
\begin{align*}
	M(b, c; s) = \sum_{n=0}^{\infty} \frac{(b)_n}{(c)_n} \frac{s^n}{n!}, \quad s\in [0,\infty),
\end{align*}
where $(d)_n$ is the Pochhammer symbol defined by
$(d)_0 = 1$
and
$(d)_n = \prod_{k=1}^n (d+k-1)$
for $n\in\mathbb{N}$; 
note that when
$b=c$, $M(b,b;s)$
coincides with $e^s$.
\end{definition}

For $\varepsilon \in (0,1/2)$, we define
\begin{align}
\label{gammatilde}
	\widetilde{\gamma}_{\varepsilon}
	=\left( \frac{2-\alpha}{n-\alpha}+2 \varepsilon \right)^{-1}, \quad 
	\gamma_{\varepsilon} = (1-2\varepsilon)
	\widetilde{\gamma}_{\varepsilon}.
\end{align}

\begin{definition}\label{phi.beta}
For $\beta\in \mathbb{R}$, define 
\begin{align*}
	\varphi_{\beta,\varepsilon}(s)=e^{-s}M \left(\gamma_{\varepsilon}-\beta, \gamma_{\varepsilon}; s\right),
	\quad s \ge 0.
\end{align*}
\end{definition}
Since
$M(\gamma_{\varepsilon},\gamma_{\varepsilon},s)
=e^s$,
we remark that
$\varphi_{0,\varepsilon}(s) \equiv 1$.
Roughly speaking,
if we formally take
$\varepsilon = 0$,
then
$\{ \varphi_{\beta,0} \}_{\beta \in \mathbb{R}}$
gives a family of self-similar profiles
of the equation
$|x|^{-\alpha} \partial_t v = \Delta v$
with the parameter $\beta$.
See \cite{SoWa19_CCM}
for more detailed explanation.
The next lemma states basic properties of
$\varphi_{\beta,\varepsilon}$.

\begin{lemma}\label{lem:phi.beta}
The function $\varphi_{\beta,\varepsilon}$ defined in Definition \ref{phi.beta} satisfies the following properties.
\begin{itemize}
    \item[(i)]
    $\varphi_{\beta,\varepsilon}(s)$
    satisfies the equation
    \begin{align}\label{eq:varphi}
        s \varphi''(s) + (\gamma_{\varepsilon}+s)\varphi'(s) + \beta \varphi(s) = 0.
    \end{align}
    \item[(ii)]
    If $0 \le \beta < \gamma_{\varepsilon}$, then
    $\varphi_{\beta,\varepsilon}(s)$ satisfies the estimates
    \begin{align}
        k_{\beta,\varepsilon} (1+s)^{-\beta} \le \varphi_{\beta,\varepsilon} (s) \le K_{\beta,\varepsilon} (1+s)^{-\beta}
    \end{align}
    with some constants
    $k_{\beta,\varepsilon}, K_{\beta,\varepsilon} > 0$.
    \item[(iii)]
    For every $\beta \ge 0$,
    $\varphi_{\beta,\varepsilon}(s)$ satisfies
    \begin{align}
        |\varphi_{\beta,\varepsilon}(s)| \le K_{\beta,\varepsilon}(1+s)^{-\beta}
    \end{align}
    with some constant $K_{\beta,\varepsilon} > 0$.
    \item[(iv)]
    For every $\beta \in \mathbb{R}$,
    $\varphi_{\beta,\varepsilon}(s)$ and $\varphi_{\beta+1,\varepsilon}(s)$
    satisfy the recurrence relation
    \begin{align}
        \beta \varphi_{\beta,\varepsilon}(s) + s \varphi_{\beta,\varepsilon}'(s)
        = \beta \varphi_{\beta+1,\varepsilon}(s).
    \end{align}
    \item[(v)]
    For every $\beta \in \mathbb{R}$,
    we have
    \begin{align}
        \varphi_{\beta,\varepsilon}'(s) &= - \frac{\beta}{\gamma_{\varepsilon}} e^{-s} M(\gamma_{\varepsilon}-\beta, \gamma_{\varepsilon} + 1; s),\\
        \varphi_{\beta,\varepsilon}''(s) &= \frac{\beta(\beta+1)}{\gamma_{\varepsilon}(\gamma_{\varepsilon}+1)} e^{-s} M(\gamma_{\varepsilon}-\beta, \gamma_{\varepsilon} + 2; s).
    \end{align}
    In particular, if
    $0< \beta < \gamma_{\varepsilon}$,
    then
    $\varphi_{\beta,\varepsilon}'(s)$
    and
    $\varphi_{\beta,\varepsilon}''(s)$
    satisfy
    \begin{align}
        -K_{\beta,\varepsilon} (1+s)^{-\beta-1}
        \le \varphi_{\beta,\varepsilon}'(s)
        \le - k_{\beta,\varepsilon} (1+s)^{-\beta-1},\\
        k_{\beta,\varepsilon} (1+s)^{-\beta-2}
        \le \varphi_{\beta,\varepsilon}''(s)
        \le K_{\beta,\varepsilon} (1+s)^{-\beta-2}
    \end{align}
    with some constants
    $k_{\beta,\varepsilon}, K_{\beta,\varepsilon} > 0$.
\end{itemize}
\end{lemma}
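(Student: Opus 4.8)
The plan is to reduce every assertion to a standard property of Kummer's function $M$ via the transformation identity $M(b,c;s)=e^{s}M(c-b,c;-s)$, which with $b=\gamma_{\varepsilon}-\beta$ and $c=\gamma_{\varepsilon}$ gives the convenient representation
\begin{align}
    \varphi_{\beta,\varepsilon}(s)=e^{-s}M(\gamma_{\varepsilon}-\beta,\gamma_{\varepsilon};s)=M(\beta,\gamma_{\varepsilon};-s).
\end{align}
The facts about $M$ that I would invoke are: Kummer's differential equation $sM''+(c-s)M'-bM=0$; the differentiation rule $\frac{d}{ds}M(b,c;s)=\frac{b}{c}M(b+1,c+1;s)$; Euler's integral representation, valid for $c>b>0$,
\begin{align}
    M(b,c;z)=\frac{\Gamma(c)}{\Gamma(b)\Gamma(c-b)}\int_{0}^{1}e^{zt}t^{b-1}(1-t)^{c-b-1}\,dt;
\end{align}
and the large-argument asymptotics $M(b,c;z)\sim \frac{\Gamma(c)}{\Gamma(c-b)}(-z)^{-b}$ as $z\to-\infty$.

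For (i), I would substitute $\varphi=e^{-s}M$ with $M=M(\gamma_{\varepsilon}-\beta,\gamma_{\varepsilon};s)$ into the left-hand side of \eqref{eq:varphi}, compute $\varphi'=e^{-s}(M'-M)$ and $\varphi''=e^{-s}(M''-2M'+M)$, and collect terms; the outcome is $e^{-s}\bigl(sM''+(\gamma_{\varepsilon}-s)M'-(\gamma_{\varepsilon}-\beta)M\bigr)$, which vanishes by Kummer's equation. For the derivative formulas in (v), I would differentiate the representation $\varphi_{\beta,\varepsilon}(s)=M(\beta,\gamma_{\varepsilon};-s)$ using the differentiation rule together with the chain rule, obtaining $\varphi_{\beta,\varepsilon}'(s)=-\frac{\beta}{\gamma_{\varepsilon}}M(\beta+1,\gamma_{\varepsilon}+1;-s)$ and $\varphi_{\beta,\varepsilon}''(s)=\frac{\beta(\beta+1)}{\gamma_{\varepsilon}(\gamma_{\varepsilon}+1)}M(\beta+2,\gamma_{\varepsilon}+2;-s)$, and then convert back by the transformation identity to recover the stated expressions involving $M(\gamma_{\varepsilon}-\beta,\gamma_{\varepsilon}+1;s)$ and $M(\gamma_{\varepsilon}-\beta,\gamma_{\varepsilon}+2;s)$.

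The quantitative estimates (ii), (iii) and the second half of (v) all follow from the same analysis applied to $M(\beta',\gamma';-s)$ with suitably shifted parameters. When $0<\beta<\gamma_{\varepsilon}$, Euler's integral shows $\varphi_{\beta,\varepsilon}(s)>0$ for all $s\ge 0$, while Watson's lemma (the contribution near $t=0$) yields $\varphi_{\beta,\varepsilon}(s)\sim \frac{\Gamma(\gamma_{\varepsilon})}{\Gamma(\gamma_{\varepsilon}-\beta)}s^{-\beta}$ as $s\to\infty$; combining this with continuity and positivity on compact $s$-intervals produces the two-sided bound in (ii), and the case $\beta=0$ is trivial since $\varphi_{0,\varepsilon}\equiv 1$. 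For (iii) with general $\beta\ge 0$ (including $\beta\ge \gamma_{\varepsilon}$, where Euler's integral is unavailable) I would instead invoke the large-argument asymptotics directly to obtain $\varphi_{\beta,\varepsilon}(s)=O(s^{-\beta})$, together with boundedness on compacts. The bounds for $\varphi_{\beta,\varepsilon}'$ and $\varphi_{\beta,\varepsilon}''$ in (v) are obtained identically from the representations $-\frac{\beta}{\gamma_{\varepsilon}}M(\beta+1,\gamma_{\varepsilon}+1;-s)$ and $\frac{\beta(\beta+1)}{\gamma_{\varepsilon}(\gamma_{\varepsilon}+1)}M(\beta+2,\gamma_{\varepsilon}+2;-s)$, whose integrands are positive and which decay like $s^{-\beta-1}$ and $s^{-\beta-2}$; the signs are read off from the prefactors. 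Finally, for the recurrence (iv), I would insert the first derivative formula of (v) into $\beta\varphi_{\beta,\varepsilon}+s\varphi_{\beta,\varepsilon}'$ and reduce the claim to the contiguous relation $M(\beta+1,\gamma_{\varepsilon};z)-M(\beta,\gamma_{\varepsilon};z)=\frac{z}{\gamma_{\varepsilon}}M(\beta+1,\gamma_{\varepsilon}+1;z)$ with $z=-s$, which is verified at once by comparing the coefficients of $z^{n}$ in the defining power series.

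The main obstacle is not any single identity but the passage from pointwise asymptotics to the uniform two-sided estimates over the whole half-line $s\ge 0$: one must match the regular behavior as $s\to 0$ with the algebraic decay as $s\to\infty$ and, in particular, ensure that the leading asymptotic coefficient $\Gamma(\gamma_{\varepsilon})/\Gamma(\gamma_{\varepsilon}-\beta)$ is nonzero and that the remainder in Watson's lemma is genuinely of lower order, so that both the upper constant $K_{\beta,\varepsilon}$ and the lower constant $k_{\beta,\varepsilon}$ can be produced. This is routine special-function analysis, but it is where the care is needed.
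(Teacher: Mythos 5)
Your proposal is correct; every identity you invoke checks out: the computation in (i) reduces to Kummer's equation with $b=\gamma_{\varepsilon}-\beta$, $c=\gamma_{\varepsilon}$; the differentiation rule combined with Kummer's first transformation $M(b,c;s)=e^{s}M(c-b,c;-s)$ reproduces exactly the stated formulas in (v); and the contiguous relation behind (iv) follows from the coefficient identity $(\beta+1)_n-(\beta)_n=n(\beta+1)_{n-1}$. Your route, however, differs genuinely from the paper's. The paper never passes to negative argument: its Lemma B.1 proves positivity and the asymptotics $M(b,c;s)\sim \Gamma(c)\Gamma(b)^{-1}s^{b-c}e^{s}$ as $s\to+\infty$ for $c\ge b>0$ directly from the Euler integral, treats the sign-indefinite case (needed for (iii) when $\beta\ge\gamma_{\varepsilon}$) by differentiating $m$ times until the numerator parameter becomes positive and then applying l'H\^{o}pital's rule, and obtains both (iv) and (v) from two contiguous relations quoted from Beals--Wong. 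Your representation $\varphi_{\beta,\varepsilon}(s)=M(\beta,\gamma_{\varepsilon};-s)$ buys a cleaner analysis: the decay is manifest rather than arising from cancellation between $e^{-s}$ and an exponentially growing $M$, Watson's lemma applies directly to the Euler integral at negative argument, and your power-series verification of the contiguous relation makes (iv) self-contained rather than cited. What the paper's route buys is that it never needs the full $z\to-\infty$ asymptotic expansion for arbitrary parameters, which is what your step (iii) rests on when $\beta\ge\gamma_{\varepsilon}$. On the caveat you flag at the end: in the degenerate case $\beta-\gamma_{\varepsilon}\in\mathbb{N}\cup\{0\}$ the coefficient $\Gamma(\gamma_{\varepsilon})/\Gamma(\gamma_{\varepsilon}-\beta)$ does vanish, but then $M(\gamma_{\varepsilon}-\beta,\gamma_{\varepsilon};s)$ is a polynomial, so $\varphi_{\beta,\varepsilon}$ decays exponentially and the one-sided bound in (iii) holds anyway; the nonvanishing of the leading coefficient is only needed for the two-sided bounds in (ii) and (v), where the hypotheses $0\le\beta<\gamma_{\varepsilon}$ and $0<\beta<\gamma_{\varepsilon}$ guarantee it. So both arguments are rigorous, and yours is, if anything, slightly more self-contained.
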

Finally, we define the weight function
which will be used
for our energy method.
\begin{definition}\label{phi.beta.ep}
For
$\beta \in \mathbb{R}$
and
$(x,t) \in \mathbb{R}^n \times [0,\infty)$,
we define 
\begin{align}
	\Phi_{\beta,\varepsilon}(x,t; t_0)=(t_0+t)^{-\beta}\varphi_{\beta,\varepsilon}(z), 
	\quad 
	z=\frac{\widetilde{\gamma}_\varepsilon A_\varepsilon(x)}{t_0+t},
\end{align}
where
$\varepsilon \in (0,1/2)$,
$\widetilde \gamma_{\varepsilon}$ is the constant given in \eqref{gammatilde},
$t_0 \ge 1$,
$\varphi_{\beta,\varepsilon}$
is the function defined by Definition \ref{phi.beta},
and
$A_{\varepsilon}(x)$ is the function constructed in Lemma \ref{lem:A:ep}.
\end{definition}
Since
$\varphi_{0,\varepsilon}(s) \equiv 1$,
we again remark that 
$\Phi_{0,\varepsilon}(x,t;t_0) \equiv 1$.

For $t_0 \ge 1$, $t>0$, and $x\in \mathbb{R}^n$,
we also define
\begin{align}
\label{psi}
	\Psi(x,t; t_0) :=
		t_0 + t + A_{\varepsilon}(x).
\end{align}

\begin{proposition}\label{prop:super-sol}
The function
$\Phi_{\beta,\varepsilon}(x,t;t_0)$
satisfies the following properties:
\begin{itemize}
\item[(i)]
For every
$\beta \ge 0$,
we have
\begin{align*}
	\partial_t \Phi_{\beta,\varepsilon}(x,t;t_0) = -\beta \Phi_{\beta+1,\varepsilon}(x,t;t_0).
\end{align*}
\item[(ii)]
If $\beta \ge 0$, then
there exists a constant
$C = C(n,\alpha,\beta,\varepsilon) > 0$
such that
\begin{align*}
	|\Phi_{\beta,\varepsilon}(x,t;t_0) |
	\le
	C \Psi (x,t; t_0)^{-\beta}
\end{align*}
for any
$(x,t) \in \mathbb{R}^n \times [0,\infty)$.
\item[(iii)]
If
$0\le \beta < \gamma_{\varepsilon}$,
then there exists a constant
$c = c(n,\alpha,\beta,\varepsilon) > 0$
such that
\begin{align*}
	\Phi_{\beta,\varepsilon}(x,t;t_0) 
	\ge
	c \Psi (x,t; t_0)^{-\beta}
\end{align*}
for any $(x,t) \in \mathbb{R}^n \times [0,\infty)$.
\item[(iv)]
For
$\beta > 0$,
there exists a constant
$c = c(n,\alpha,\beta,\varepsilon)>0$
such that
\begin{align*}
	a(x)\partial_t\Phi_{\beta,\varepsilon}(x,t;t_0)
	-\Delta \Phi_{\beta,\varepsilon}(x,t;t_0)
	\ge
	c a(x) \Psi(x,t;t_0)^{-\beta-1}
\end{align*}
for any
$(x,t) \in \mathbb{R}^n \times [0,\infty)$.
\end{itemize}
\end{proposition}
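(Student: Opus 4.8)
The plan is to treat (i)--(iii) by direct computation and to reduce the substantial claim (iv) to a one--variable inequality for the confluent hypergeometric profiles. Throughout I would write $T=t_0+t$, $z=\widetilde\gamma_\varepsilon A_\varepsilon(x)/T$, $\varphi=\varphi_{\beta,\varepsilon}$, and set $\mu:=\frac{2-\alpha}{n-\alpha}$, so that $\widetilde\gamma_\varepsilon=(\mu+2\varepsilon)^{-1}$ and $\gamma_\varepsilon=(1-2\varepsilon)\widetilde\gamma_\varepsilon$ by \eqref{gammatilde}. For (i), since $\partial_t z=-z/T$, the chain rule gives $\partial_t\Phi_{\beta,\varepsilon}=-T^{-\beta-1}\bigl(\beta\varphi(z)+z\varphi'(z)\bigr)$, and the recurrence of Lemma \ref{lem:phi.beta}(iv) rewrites the bracket as $\beta\varphi_{\beta+1,\varepsilon}(z)$, i.e. $-\beta\Phi_{\beta+1,\varepsilon}$. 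For (ii) and (iii) I would use the two--sided bound $\varphi_{\beta,\varepsilon}(z)\simeq(1+z)^{-\beta}$ of Lemma \ref{lem:phi.beta}(ii) (and the upper bound alone, from (iii) of that lemma, for general $\beta\ge0$) to get
\[
    \Phi_{\beta,\varepsilon}=T^{-\beta}\varphi(z)\simeq\bigl(T(1+z)\bigr)^{-\beta}=\bigl(T+\widetilde\gamma_\varepsilon A_\varepsilon\bigr)^{-\beta};
\]
as $\widetilde\gamma_\varepsilon$ is a fixed positive constant and $A_\varepsilon>0$ by \eqref{A2}, the factor $T+\widetilde\gamma_\varepsilon A_\varepsilon$ is comparable to $\Psi=T+A_\varepsilon$ of \eqref{psi}, yielding both inequalities.

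For (iv) I would compute $a\partial_t\Phi_{\beta,\varepsilon}-\Delta\Phi_{\beta,\varepsilon}$ head on. Using $\nabla z=\frac{\widetilde\gamma_\varepsilon}{T}\nabla A_\varepsilon$ and absorbing the two time--derivative terms via Lemma \ref{lem:phi.beta}(iv), one reaches the exact identity
\[
    a\partial_t\Phi_{\beta,\varepsilon}-\Delta\Phi_{\beta,\varepsilon}
    =T^{-\beta-1}\Bigl[\widetilde\gamma_\varepsilon\Delta A_\varepsilon\,|\varphi'(z)|-\beta a\,\varphi_{\beta+1,\varepsilon}(z)-\tfrac{\widetilde\gamma_\varepsilon|\nabla A_\varepsilon|^2}{A_\varepsilon}\,z\,\varphi''(z)\Bigr],
\]
where I restrict to $0<\beta<\gamma_\varepsilon$ so that $\varphi'<0<\varphi''$ by Lemma \ref{lem:phi.beta}(v); this is the range used in the energy method, and the remaining values of $\beta$ I would handle separately. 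Feeding the lower bound $\Delta A_\varepsilon\ge(1-\varepsilon)a$ from \eqref{A1} into the first term and the upper bound $\widetilde\gamma_\varepsilon|\nabla A_\varepsilon|^2/A_\varepsilon\le\theta a$, with $\theta:=\frac{\mu+\varepsilon}{\mu+2\varepsilon}<1$ from \eqref{A3}, into the last term, the estimate reduces to the scalar inequality
\[
    g(z):=\widetilde\gamma_\varepsilon(1-\varepsilon)|\varphi'(z)|-\beta\varphi_{\beta+1,\varepsilon}(z)-\theta z\varphi''(z)\ge c(1+z)^{-\beta-1},\quad z\ge0.
\]

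The heart of the matter, and the step I expect to be hardest, is this scalar bound. Using \eqref{eq:varphi} to eliminate $z\varphi''$ and Lemma \ref{lem:phi.beta}(iv) to eliminate $\varphi$, the function collapses to the two--term form $g=D|\varphi'|-(1-\theta)\beta\varphi_{\beta+1,\varepsilon}$ with $D=\widetilde\gamma_\varepsilon^2\varepsilon(1+\mu)>0$. The delicate feature is that at $\varepsilon=0$ one has $D=0$, $\theta=1$ and $g\equiv0$ identically by \eqref{eq:varphi}: the positivity is therefore a genuine $O(\varepsilon)$ gain that must be extracted uniformly in $z$. I would capture it through the power series: by Lemma \ref{lem:phi.beta}(v) and Definition \ref{phi.beta}, $e^{z}g/\beta$ is a fixed combination of $M(\gamma_\varepsilon-\beta,\gamma_\varepsilon+1;z)$ and $M(\gamma_\varepsilon-\beta-1,\gamma_\varepsilon;z)$, and a coefficientwise comparison of the two Kummer series reduces the nonnegativity of every Taylor coefficient to the single numerical inequality $\frac{D}{\gamma_\varepsilon(1-\theta)}=\frac{1+\mu}{1-2\varepsilon}\ge1$, which holds because $\mu>0$. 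This shows $g(z)>0$ for all $z\ge0$. Combining this strict positivity with the decay estimates $|\varphi'|\simeq(1+z)^{-\beta-1}$ and $\varphi_{\beta+1,\varepsilon}\le K(1+z)^{-\beta-1}$ of Lemma \ref{lem:phi.beta}(ii),(v), the function $(1+z)^{\beta+1}g(z)$ is continuous and strictly positive on $[0,\infty)$ with a strictly positive limit as $z\to\infty$, so a compactness argument gives the uniform lower bound $g(z)\ge c(1+z)^{-\beta-1}$. Inserting this into the identity and using $T^{-\beta-1}(1+z)^{-\beta-1}=(T+\widetilde\gamma_\varepsilon A_\varepsilon)^{-\beta-1}\simeq\Psi^{-\beta-1}$, exactly as in (ii)--(iii), produces the desired bound $ca(x)\Psi^{-\beta-1}$.
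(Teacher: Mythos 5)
Parts (i)--(iii) of your proposal coincide with the paper's proof: the chain rule plus the recurrence of Lemma \ref{lem:phi.beta} (iv) for (i), and the bounds of Lemma \ref{lem:phi.beta} (ii)--(iii) together with the comparability of $t_0+t+\widetilde{\gamma}_{\varepsilon}A_{\varepsilon}(x)$ and $\Psi(x,t;t_0)$ for (ii)--(iii).

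For (iv) you take a genuinely different and much longer route. The paper starts from the same pointwise identity but eliminates $\beta\varphi+z\varphi'$ via the ODE \eqref{eq:varphi} rather than the recurrence, arriving at the two terms $\widetilde{\gamma}_{\varepsilon}a(t_0+t)^{-\beta-1}\bigl(1-2\varepsilon-\tfrac{\Delta A_{\varepsilon}}{a}\bigr)\varphi_{\beta,\varepsilon}'(z)$ and $a(t_0+t)^{-\beta-1}\bigl(1-\widetilde{\gamma}_{\varepsilon}\tfrac{|\nabla A_{\varepsilon}|^2}{aA_{\varepsilon}}\bigr)z\varphi_{\beta,\varepsilon}''(z)$. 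By \eqref{A1} and \eqref{A3} \emph{both} coefficients have favorable signs, so the Hessian term is simply discarded and the gradient term alone gives the bound $\ge \varepsilon\widetilde{\gamma}_{\varepsilon}a(t_0+t)^{-\beta-1}|\varphi_{\beta,\varepsilon}'(z)|$, which Lemma \ref{lem:phi.beta} (v) converts to $ca\Psi^{-\beta-1}$; there is no two-term cancellation to control. You instead introduce $\varphi_{\beta+1,\varepsilon}$, bound the Hessian coefficient from \emph{above} by $\theta a$, and must then prove positivity of $g=D|\varphi'|-(1-\theta)\beta\varphi_{\beta+1,\varepsilon}$, which really does degenerate as $\varepsilon\to 0$. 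Your algebra is correct ($D=\widetilde{\gamma}_{\varepsilon}^{2}\varepsilon(1+\mu)$ and $D/(\gamma_{\varepsilon}(1-\theta))=(1+\mu)/(1-2\varepsilon)>1$), and the coefficientwise comparison does close: for $\gamma_{\varepsilon}-\beta-1>0$ the Pochhammer ratio inequality $\tfrac{(\gamma_{\varepsilon}-\beta)_k}{(\gamma_{\varepsilon}+1)_k}\ge\tfrac{(\gamma_{\varepsilon}-\beta-1)_k}{(\gamma_{\varepsilon})_k}$ reduces to $k(\beta+1)\ge 0$, while for $\gamma_{\varepsilon}-\beta-1<0$ the subtracted coefficients are themselves nonpositive for $k\ge1$. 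Your restriction to $0<\beta<\gamma_{\varepsilon}$ is also consistent with the paper, whose own proof of (iv) invokes the sign information of Lemma \ref{lem:phi.beta} (v) and is therefore subject to the same de facto restriction.

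The one genuine soft spot is your final step. Strict positivity of $g$ together with $|\varphi'|\simeq(1+z)^{-\beta-1}$ and $\varphi_{\beta+1,\varepsilon}\le K(1+z)^{-\beta-1}$ does \emph{not} imply that $(1+z)^{\beta+1}g(z)$ has a strictly positive limit as $z\to\infty$: a difference of two such quantities can perfectly well tend to zero, and then compactness yields no bound of the form $c(1+z)^{-\beta-1}$. Two repairs are available. Either compute the limit from \eqref{eq:M:infty:2}; using $\Gamma(\gamma_{\varepsilon}-\beta)=(\gamma_{\varepsilon}-\beta-1)\Gamma(\gamma_{\varepsilon}-\beta-1)$ one finds it equals $\beta(1-\theta)(\beta+2)\Gamma(\gamma_{\varepsilon})/\Gamma(\gamma_{\varepsilon}-\beta)>0$. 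Or, more economically, observe that your own coefficientwise domination already proves the stronger statement $e^{z}g(z)/\beta\ge\bigl(\tfrac{D}{\gamma_{\varepsilon}}-(1-\theta)\bigr)M(\gamma_{\varepsilon}-\beta,\gamma_{\varepsilon}+1;z)$, i.e. $g\ge(D-\gamma_{\varepsilon}(1-\theta))|\varphi'|=\varepsilon\widetilde{\gamma}_{\varepsilon}|\varphi'|$, after which Lemma \ref{lem:phi.beta} (v) finishes with no limit argument at all. Note that this repaired constant $\varepsilon\widetilde{\gamma}_{\varepsilon}$ is exactly the constant the paper obtains in one line: the two routes are algebraic rearrangements of the same identity, and the paper's choice to eliminate $z\varphi''$ (rather than retain and re-expand it) is precisely what avoids the Kummer series comparison altogether.
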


Finally, we prepare a useful lemma
for our weighted energy method.
The proof can be found in
\cite[Lemma 3.6]{SoWa19_CCM}
or
\cite[Lemma 2.5]{So19DIE}.
However, for the convenience, we give its proof
in the appendix.
\begin{lemma}\label{lem:deltaphi}
Let
$\Omega = \mathbb{R}^n$
with $n \ge 1$
or
$\Omega \subset \mathbb{R}^n$
with $n \ge 2$
be an exterior domain with
$C^2$-boundary.
Let $\Phi \in C^2(\overline{\Omega})$
be a positive function and
let $\delta \in (0, 1/2)$.
Then, for any
$u\in H^2(\Omega) \cap H^1_0(\Omega)$
satisfying 
$\supp u \in B_R(0) = \{ x \in \mathbb{R}^n ;\, |x| < R \}$
with some
$R>0$,
we have
\begin{align*}
	\int_{\Omega} \left( u \Delta u \right) \Phi^{-1+2\delta}\,dx
		&\le - \frac{\delta}{1-\delta} \int_{\Omega} |\nabla u|^2 \Phi^{-1+2\delta}\,dx
			+ \frac{1-2\delta}{2} \int_{\Omega} u^2 (\Delta \Phi) \Phi^{-2+2\delta} \,dx.
\end{align*}
\end{lemma}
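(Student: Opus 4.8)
The plan is to reduce the estimate to a pure integration-by-parts computation and then absorb the resulting gradient cross term into a perfect square. Abbreviate $\sigma := -1+2\delta \in (-1,0)$. Since $\Phi \in C^2(\overline{\Omega})$ is positive, on the compact set $\overline{\Omega}\cap \overline{B_R(0)}$ containing $\supp u$ the functions $\Phi^{\sigma}$, $\Phi^{\sigma-1}$, $\Phi^{\sigma-2}$ and the relevant derivatives of $\Phi$ are bounded, and $\Phi$ is bounded away from zero; hence every integrand below is integrable and the manipulations are legitimate for $u \in H^2(\Omega)\cap H^1_0(\Omega)$. All boundary integrals over $\partial\Omega$ vanish because $u$ has vanishing trace there (it lies in $H^1_0(\Omega)$) and the compact support suppresses any contribution from infinity. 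First I would apply Green's identity and then treat the cross term by writing $u\nabla u = \tfrac12\nabla(u^2)$ and integrating by parts once more, using $\nabla\cdot(\Phi^{\sigma-1}\nabla\Phi) = (\sigma-1)\Phi^{\sigma-2}|\nabla\Phi|^2 + \Phi^{\sigma-1}\Delta\Phi$, to reach the exact identity
\[
    \int_{\Omega} u\Delta u\,\Phi^{\sigma}\,dx
    = -\int_{\Omega}|\nabla u|^2\Phi^{\sigma}\,dx
    + \frac{\sigma(\sigma-1)}{2}\int_{\Omega} u^2|\nabla\Phi|^2\Phi^{\sigma-2}\,dx
    + \frac{\sigma}{2}\int_{\Omega} u^2(\Delta\Phi)\Phi^{\sigma-1}\,dx .
\]

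The difficulty is that, because $\tfrac{\sigma(\sigma-1)}{2} = (1-\delta)(1-2\delta) > 0$, the $|\nabla\Phi|^2$-term carries the \emph{wrong} sign to simply be discarded, while the $(\Delta\Phi)$-term appears with coefficient $\tfrac{\sigma}{2} = -\tfrac{1-2\delta}{2} < 0$ rather than the desired $+\tfrac{1-2\delta}{2}$. The hard part is to realize that both defects are repaired \emph{simultaneously} by completing the square with coefficient $-(1-\delta)$ — equivalently, by the substitution $v = \Phi^{-(1-\delta)}u$ — rather than with the naive choice $\tfrac{\sigma}{2}$ that would only cancel the cross term and leave the $|\nabla\Phi|^2$-term behind.

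Concretely, I would verify — by expanding the square and re-using the same second integration by parts on its cross term — the identity
\[
    \int_{\Omega} u\Delta u\,\Phi^{-1+2\delta}\,dx
    = -\frac{\delta}{1-\delta}\int_{\Omega}|\nabla u|^2\Phi^{-1+2\delta}\,dx
    + \frac{1-2\delta}{2}\int_{\Omega} u^2(\Delta\Phi)\Phi^{-2+2\delta}\,dx
    - \frac{1-2\delta}{1-\delta}\int_{\Omega} \Phi\,\bigl|\nabla(\Phi^{-(1-\delta)}u)\bigr|^2\,dx ,
\]
where I use the pointwise relation $\nabla u - (1-\delta)u\Phi^{-1}\nabla\Phi = \Phi^{1-\delta}\nabla(\Phi^{-(1-\delta)}u)$ together with the exponent arithmetic $(-1+2\delta)+2(1-\delta)=1$ to fix the weight $\Phi$ of the last integral. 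Since $\delta \in (0,1/2)$ gives $\tfrac{1-2\delta}{1-\delta} > 0$ and the final integral is manifestly nonnegative, dropping it yields exactly the asserted inequality.

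I expect the only genuine obstacle to be locating this correct square and the constant $\tfrac{1-2\delta}{1-\delta}$; the rest is a mechanical matching of the three coefficients of $\int|\nabla u|^2\Phi^{\sigma}$, $\int u^2(\Delta\Phi)\Phi^{\sigma-1}$, and $\int u^2|\nabla\Phi|^2\Phi^{\sigma-2}$ between the expanded square (after the cross-term integration by parts produces $-\tfrac12[(\sigma-1)N+D]$, with $N,D$ the last two integrals) and the first identity above, which I have checked are consistent. The trace-zero condition from $u\in H^1_0(\Omega)$ and the compact support are what make all intermediate boundary terms disappear, so no further regularity of $\partial\Omega$ is needed beyond that used to describe $D(\mathcal{A})$.
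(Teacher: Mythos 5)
Your proof is correct and is essentially the paper's own argument: the substitution $v=\Phi^{-(1-\delta)}u$ you use to complete the square is exactly the paper's change of variable $v=\Phi^{-1+\delta}u$, and the nonnegative quantity you discard, $\tfrac{1-2\delta}{1-\delta}\int_{\Omega}\Phi\,|\nabla(\Phi^{-(1-\delta)}u)|^2\,dx$, is precisely what the paper drops when it bounds $\int_{\Omega}|\nabla u|^2\Phi^{-1+2\delta}\,dx$ from below before inserting it into the identity $u\Delta u=-|\nabla u|^2+\Delta(u^2/2)$. The only difference is organizational (one exact identity with a manifestly nonnegative remainder versus two chained estimates), so there is nothing to correct.
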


\section{Proof of Theorem \ref{thm1}: first part}\label{sec:case1}

In this section, we prove Theorem \ref{thm1} (i).
First, we note that
Proposition \ref{prop:ex}
implies the existence of the global
mild solution $u$.

Following the argument in
Sobajima \cite{So_pre},
we first prove Theorem \ref{thm1} (i)
in the case of compactly supported
initial data,
and after that, we will treat the
general case by an approximation
argument.

\subsection{Proof for the
compactly supported initial data}
We first consider the case
where the initial data are
compactly supported,
that is, we assume that
$\supp u_0 \cup \supp u_1 \subset
B_{R_0}(0) = \{ x \in \mathbb{R}^n ;\,
|x| < R_0\}$.
Then, by the finite propagation property
(see Section \ref{sec:fpp}),
the corresponding mild solution
$u$
satisfies
$\supp u(t,\cdot) \subset
B_{R_0+t}(0)$.

Let $T_0 > 0$ be arbitrary fixed
and let $T \in (0,T_0)$.
Then, we have
$\supp u(t,\cdot) \subset B_{R_0+T_0}(0)$
for all
$t \in [0,T]$.
Let
$D = \Omega \cap B_{R_0+T_0}(0)$.
Then,
for $t \in [0,T]$,
we can convert the problem \eqref{dwx}
to the problem in the bounded domain
\begin{align}\label{eq:3:dwx}
    \left\{ \begin{array}{ll}
        \partial_t^2 u - \Delta u + a(x) \partial_t u + |u|^{p-1}u = 0, &t\in (0,T], x \in D, \\
        u(t,x) = 0, &t \in (0,T], x \in \partial D,\\
        u(0,x) = u_0(x), \ 
        \partial_t u(0,x) = u_1(x),
        &x \in D
        \end{array} \right.
\end{align}
with
$(u_0,u_1) \in \mathcal{H}_D := H_0^1(D) \times L^2(D)$.

Let
$\mathcal{A}_D$
be the operator
\begin{align}
    \mathcal{A}_D =
    \begin{pmatrix} 0 & 1 \\ \Delta & -a(x) \end{pmatrix}
\end{align}
defined on
$\mathcal{H}_D$
with the domain
$D(\mathcal{A}_D) = (H^2(D) \cap H^1_0(D)) \times H^1_0(D)$.
Then, from the argument in
Section \ref{sec:A:1},
there exists
$\lambda_{\ast} > 0$
such that for any
$\lambda > \lambda_{\ast}$,
the resolvent
$J_{\lambda} = (I-\lambda^{-1}\mathcal{A}_D)^{-1}$
is defined as a bounded operator on
$\mathcal{H}_D$.
Take a sequence
$\{ \lambda_j \}_{j=1}^{\infty}$
such that
$\lambda_j > \lambda_{\ast}$
for $j \ge 1$
and
$\lim_{j \to \infty} \lambda_j = \infty$,
and define
\begin{align}
    \begin{pmatrix}
    u_0^{(j)}\\ u_1^{(j)}
    \end{pmatrix}
    := J_{\lambda_j} 
    \begin{pmatrix}
    u_0\\ u_1
    \end{pmatrix}.
\end{align}
Then, we have
\begin{align}\label{eq:3:approx}
    (u_0^{(j)}, u_1^{(j)})
    \in D(\mathcal{A}_D),
    \quad
    \lim_{j\to \infty}
    (u_0^{(j)},u_1^{(j)})
    = (u_0, u_1)
    \ \text{in} \ \mathcal{H}_D
\end{align}
(see e.g. the proof of
\cite[Theorem 2.18]{Ikawa}).
Therefore,
Proposition \ref{prop:ex} shows that
the mild solution
$u^{(j)}$
corresponding to the initial data 
$(u_0^{(j)}, u_1^{(j)})$
becomes a strong solution.
Moreover, the continuous dependence on
the initial data
(see Section \ref{sec:conti:dep})
implies
\begin{align}
    \lim_{j\to \infty}
    \sup_{t \in [0,T]}
    \| (u^{(j)}(t), \partial_t u^{(j)}(t))
    - (u(t),\partial_t u(t)) \|_{\mathcal{H}_D} = 0.
\end{align}
This means that, if we prove
the conclusion of Theorem \ref{thm1} (i)
for $u^{(j)}$, that is,
\begin{align}
    (1+t)E[u^{(j)}](t)
    + \int_{\Omega} a(x) |u^{(j)}(t,x)|^2 \,dx
    \le
    C I_0[u_0^{(j)}, u_1^{(j)}]
    (1+t)^{-\lambda}
\end{align}
for $t \in [0,T]$,
where the constant
$C$ is independent of
$j, T, T_0, R_0$,
then letting
$j \to \infty$
and also using the Sobolev embedding
$\| u \|_{L^{p+1}(D)} \le
C \| u \|_{H^1(D)}$,
we have the same estimate for
the original mild solution
$u$.
Note that \eqref{eq:3:approx}
implies
$\lim_{j\to \infty} I_0[u_0^{(j)},u_1^{(j)}] = I_0[u_0,u_1]$,
since the integral is taken over
the bounded region $D$.
Finally, since
$T$ and $T_0$ are arbitrary and
$C$ is independent of them,
we obtain the desired energy estimate
for any $t \ge 0$.

Therefore, in the following argument,
we may further assume
$(u_0, u_1) \in D(\mathcal{A}_D)$
and $u$ is the strong solution.
This enables us to justify
all the computations
in this section.

In what follows,
we shall use the weight functions
$\Phi_{\beta,\varepsilon}(x,t;t_0)$
and
$\Psi(x,t;t_0)$
defined by
Definition \ref{phi.beta.ep} and
\eqref{psi},
respectively.
We also recall that
the constant
$\gamma_{\varepsilon}$
is given by \eqref{gammatilde}.
Then, we define the following energies.

\begin{definition}\label{def:1:energy}
For a function
$u = u(t,x)$,
$\alpha \in [0,1)$,
$\delta \in (0,1/2)$,
$\varepsilon \in (0,1/2)$,
$\lambda \in [0,(1-2\delta)\gamma_{\varepsilon})$,
$\beta = \lambda/(1-2\delta)$,
$\nu > 0$,
and
$t_0 \ge 1$,
we define
\begin{align}
    E_1(t;t_0,\lambda) &=
    \int_{\Omega} \left[
    \frac{1}{2}\left( |\partial_t u(t,x)|^2 + |\nabla u(t,x)|^2 \right)
    + \frac{1}{p+1} |u(t,x)|^{p+1}
    \right]
    \Psi(t,x;t_0)^{\lambda+\frac{\alpha}{2-\alpha}}
    \,dx,\\
    E_0(t;t_0,\lambda)
    &= \int_{\Omega}
    \left(
    2u(t,x) \partial_t u(t,x) + a(x) |u(t,x)|^2 
    \right)
    \Phi_{\beta,\varepsilon}(t,x;t_0)^{-1+2\delta} \,dx,\\
    E_{\ast}(t;t_0,\lambda,\nu)
    &=
    E_1(t;t_0,\lambda)
    + \nu E_0(t;t_0,\lambda),\\
    \tilde{E}(t;t_0,\lambda)
    &=
    (t_0+t) \int_{\Omega} \left[
    \frac{1}{2}\left( |\partial_t u(t,x)|^2 + |\nabla u(t,x)|^2 \right)
    + \frac{1}{p+1} |u(t,x)|^{p+1}
    \right]
    \Psi(t,x;t_0)^{\lambda}
    \,dx
\end{align}
for $t \ge 0$.
\end{definition}
Since
\begin{align}\label{eq:3:en:pos1}
    2 u \partial_t u
    &\le
    \frac{a(x)}{2} |u|^2 + \frac{2}{a(x)} |\partial_t u|^2 
    \le 
    \frac{a(x)}{2} |u|^2 + C \Psi^{\frac{\alpha}{2-\alpha}} |\partial_t u|^2
\end{align}
and
$\Phi_{\beta,\varepsilon}^{-1+2\delta}
\le C \Psi^{\lambda}$
(see \eqref{A2} and Proposition \ref{prop:super-sol} (iii)),
we see that there exists a small constant
$\nu_0 = \nu_0(n,a,\delta,\varepsilon,\lambda) > 0$
such that for any
$\nu \in (0,\nu_0)$,
\begin{align}\label{eq:3:en:pos2}
    E_{\ast}(t;t_0,\lambda, \nu)
    &\ge
    \frac{1}{2} E_1(t;t_0,\lambda)
    + \frac{\nu}{2} \int_{\Omega}
    a(x) |u(t,x)|^2 
    \Psi(t,x;t_0)^{\lambda} \,dx
\end{align}
holds.

We first prepare the following
energy estimates for
$E_1(t;t_0,\lambda)$
and
$E_0(t;t_0,\lambda)$.

\begin{lemma}\label{lem:1:E1}
Under the assumptions on Theorem \ref{thm1} (i),
there exists
$t_1 = t_1(n,a,\lambda,\varepsilon) \ge 1$
such that for
$t_0 \ge t_1$
and $t > 0$, we have
\begin{align}
    \frac{d}{dt} E_1(t;t_0,\lambda) 
    &\le
    - \frac{1}{2} \int_{\Omega} a(x) |\partial_t u(t,x)|^2 \Psi(t,x;t_0)^{\lambda + \frac{\alpha}{2-\alpha}} \,dx \\
    &\quad +
    C \int_{\Omega}
    \left(
        |\nabla u(t,x)|^2
        + |u(t,x)|^{p+1}
    \right)
    \Psi(t,x;t_0)^{\lambda + \frac{\alpha}{2-\alpha}-1} \,dx
\end{align}
with some constant
$C = C(n,\alpha,p,\lambda) > 0$.
\end{lemma}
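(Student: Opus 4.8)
The plan is to differentiate $E_1$ directly, use the equation to eliminate $\partial_t^2 u$, and then control the terms arising from the spatial gradient of the weight by the dissipative term $-\int_{\Omega} a|\partial_t u|^2 \Psi^{\mu}$, where I abbreviate $\mu = \lambda + \frac{\alpha}{2-\alpha}$. Since we have already reduced to strong solutions with compactly supported data, every manipulation below is justified. Noting that $\partial_t \Psi = 1$, differentiating under the integral sign produces a ``good'' term $\mu \int_{\Omega}[\cdots]\Psi^{\mu-1}\,dx$ coming from the weight, together with $\int_{\Omega}(\partial_t u\,\partial_t^2 u + \nabla u\cdot\nabla\partial_t u + |u|^{p-1}u\,\partial_t u)\Psi^{\mu}\,dx$. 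In the latter I substitute $\partial_t^2 u = \Delta u - a\partial_t u - |u|^{p-1}u$ from \eqref{dwx}; the absorbing terms cancel, leaving $\int_{\Omega}(\partial_t u\,\Delta u - a|\partial_t u|^2 + \nabla u\cdot\nabla\partial_t u)\Psi^{\mu}\,dx$.

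Next I integrate by parts the term $\partial_t u\,\Delta u\,\Psi^{\mu}$. The boundary contribution vanishes because $u = 0$ (hence $\partial_t u = 0$) on $\partial\Omega$ and $u$ has compact support, while the part of $\nabla(\partial_t u\,\Psi^{\mu})$ carrying $\nabla\partial_t u$ cancels the remaining $\nabla u\cdot\nabla\partial_t u\,\Psi^{\mu}$. Using $\nabla\Psi = \nabla A_{\varepsilon}$, what survives is
\[
    \frac{d}{dt}E_1 = -\int_{\Omega} a|\partial_t u|^2\Psi^{\mu}\,dx - \mu\int_{\Omega}\partial_t u(\nabla A_{\varepsilon}\cdot\nabla u)\Psi^{\mu-1}\,dx + \mu\int_{\Omega}\Big[\tfrac12|\partial_t u|^2 + \tfrac12|\nabla u|^2 + \tfrac{1}{p+1}|u|^{p+1}\Big]\Psi^{\mu-1}\,dx .
\]
It then remains to absorb the two $|\partial_t u|^2$ contributions into the dissipation. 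For the cross term I apply Young's inequality, splitting the weight as $\Psi^{\mu-1} = \Psi^{\mu}\cdot\Psi^{-1}$, which yields $\tfrac14\int_{\Omega} a|\partial_t u|^2\Psi^{\mu}$ together with a term proportional to $\int_{\Omega}\frac{|\nabla A_{\varepsilon}|^2}{a}|\nabla u|^2\Psi^{\mu-2}$. The crucial input here is \eqref{A3}, $\frac{|\nabla A_{\varepsilon}|^2}{a A_{\varepsilon}}\le\frac{2-\alpha}{n-\alpha}+\varepsilon$, combined with $A_{\varepsilon}\le\Psi$; this converts $\frac{|\nabla A_{\varepsilon}|^2}{a}\Psi^{\mu-2}$ into $C\Psi^{\mu-1}$, so the whole cross term is dominated by $\tfrac14\int_{\Omega} a|\partial_t u|^2\Psi^{\mu} + C\int_{\Omega}|\nabla u|^2\Psi^{\mu-1}$.

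The main obstacle, and the reason $t_0$ must be taken large, is the leftover term $\tfrac{\mu}{2}\int_{\Omega}|\partial_t u|^2\Psi^{\mu-1}$: to absorb it into $-\int_{\Omega} a|\partial_t u|^2\Psi^{\mu}$ I need the pointwise lower bound $a(x)\Psi(x,t;t_0)\ge 2\mu$ uniformly in $x$. This cannot come from $a$ alone, since $a(x)\to 0$ as $|x|\to\infty$. Instead I use that $a(x)A_{\varepsilon}(x)$ grows like $|x|^2$ at infinity (by \eqref{A2} together with $a(x)\sim a_0|x|^{-\alpha}$), so $a\Psi\ge a A_{\varepsilon}\ge 2\mu$ outside a large ball, while on the complementary compact set $a$ has a positive minimum and $\Psi\ge t_0$; choosing $t_1 = t_1(n,a,\lambda,\varepsilon)$ large enough then forces $a\Psi\ge 2\mu$ everywhere for $t_0\ge t_1$. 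Consequently $\tfrac{\mu}{2}\Psi^{\mu-1}\le\tfrac14 a\Psi^{\mu}$ pointwise, and collecting the three coefficients $-1+\tfrac14+\tfrac14 = -\tfrac12$ gives the dissipative term with the asserted constant $-\tfrac12$, while the gradient and nonlinear contributions assemble into $C\int_{\Omega}(|\nabla u|^2 + |u|^{p+1})\Psi^{\mu-1}$, which is precisely the claimed inequality.
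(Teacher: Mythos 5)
Your proof is correct and follows the same overall route as the paper's: differentiate $E_1$, substitute the equation, integrate by parts (with the boundary terms vanishing as you say), and handle the cross term $-\left(\lambda+\frac{\alpha}{2-\alpha}\right)\int_{\Omega}\partial_t u\,(\nabla u\cdot\nabla\Psi)\Psi^{\lambda+\frac{\alpha}{2-\alpha}-1}\,dx$ by Young's inequality together with \eqref{A3} and $A_{\varepsilon}\le\Psi$ — this is exactly the paper's computation. The one place you genuinely deviate is the absorption of the leftover kinetic term $\left(\lambda+\frac{\alpha}{2-\alpha}\right)\int_{\Omega}|\partial_t u|^2\Psi^{\lambda+\frac{\alpha}{2-\alpha}-1}\,dx$: the paper proves the uniform pointwise bound $\Psi^{-1}\le t_0^{-1+\frac{\alpha}{2-\alpha}}A_{\varepsilon}^{-\frac{\alpha}{2-\alpha}}\le C t_0^{-\frac{2(1-\alpha)}{2-\alpha}}a(x)$, obtained by interpolating $\Psi\ge\max\{t_0+t,A_{\varepsilon}\}$ and using \eqref{A2} together with $\langle x\rangle^{-\alpha}\le C a(x)$, so the needed smallness comes from a negative power of $t_0$ uniformly in $x$; you instead argue by dichotomy, using that $a A_{\varepsilon}$ tends to infinity in the far field and that $a$ has a positive minimum on the complementary compact set where $\Psi\ge t_0$. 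Both arguments are valid and yield a threshold $t_1=t_1(n,a,\lambda,\varepsilon)$; the paper's version is slightly more quantitative (it exhibits the explicit rate $t_0^{-\frac{2(1-\alpha)}{2-\alpha}}$ at which the bad term becomes negligible), but that extra precision is not needed for the lemma. One harmless slip in your write-up: by \eqref{A2} and $a(x)\sim a_0|x|^{-\alpha}$, the product $a A_{\varepsilon}$ grows like $|x|^{2-2\alpha}$, not $|x|^{2}$; since $\alpha\in[0,1)$ this still diverges, so your dichotomy argument goes through unchanged.
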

\begin{proof}
Differentiating
$E_1(t;t_0,\lambda)$,
one has
\begin{align}
    \frac{d}{dt} E_1(t;t_0,\lambda)
    &=
    \int_{\Omega}
    \left[
    \partial_t u \partial_t^2 u
    + \nabla u \cdot \nabla \partial_t u
    + |u|^{p-1}u \partial_t u
    \right]
    \Psi^{\lambda + \frac{\alpha}{2-\alpha}} \,dx \\
    &\quad
    + \left( \lambda + \frac{\alpha}{2-\alpha} \right)
    \int_{\Omega}
    \left[
    \frac{1}{2}\left( |\nabla u|^2 + |\partial_t u|^2 \right)
    + \frac{1}{p+1} |u|^{p+1}
    \right]
    \Psi^{\lambda + \frac{\alpha}{2-\alpha}-1} \,dx.
\end{align}
The integration by parts
and the equation \eqref{dwx} imply
\begin{align}
    \frac{d}{dt} E_1(t;t_0,\lambda)
    &=
    - \int_{\Omega} a(x) |\partial_t u|^2
    \Psi^{\lambda + \frac{\alpha}{2-\alpha}} \,dx \\
    &\quad
    - \left( \lambda + \frac{\alpha}{2-\alpha} \right)
    \int_{\Omega}
    \partial_t u (\nabla u \cdot \nabla \Psi) \Psi^{\lambda + \frac{\alpha}{2-\alpha}-1} \,dx \\
    &\quad
    + \left( \lambda + \frac{\alpha}{2-\alpha} \right)
    \int_{\Omega}
    \left[
    \frac{1}{2}\left( |\nabla u|^2 + |\partial_t u|^2 \right)
    + \frac{1}{p+1} |u|^{p+1}
    \right]
    \Psi^{\lambda + \frac{\alpha}{2-\alpha}-1} \,dx.
\label{eq:1:lem:E1:1}
\end{align}
Let us estimate the right-hand side.
First, the Schwarz inequality gives
\begin{align}
    \left| -\left( \lambda + \frac{\alpha}{2-\alpha} \right)
    \partial_t u (\nabla u \cdot \nabla \Psi)
    \right|
    &\le
    \frac{a(x)}{4} |\partial_t u|^2 \Psi
    + C |\nabla u|^2
    \frac{|\nabla \Psi|^2}{a(x)\Psi}.
\end{align}
Moreover, by \eqref{A3}, we have
\begin{align}\label{eq:1:Psi:quot}
    \frac{|\nabla \Psi|^2}{a(x)\Psi}
    \le \frac{|\nabla A_{\varepsilon}(x)|^2}{a(x)A_{\varepsilon}(x)}
    \le \frac{2-\alpha}{n-\alpha} + \varepsilon.
\end{align}
Also, from the definition of
$\Psi$,
\eqref{A2},
and $a(x) \sim \langle x \rangle^{-\alpha}$,
one obtains
\begin{align}\label{eq:1:Psi:-1}
    \Psi(t,x;t_0)^{-1}
    \le
    t_0^{-1+\frac{\alpha}{2-\alpha}}
    A_{\varepsilon}(x)^{-\frac{\alpha}{2-\alpha}}
    \le
    C t_0^{-\frac{2(1-\alpha)}{2-\alpha}}
    a(x).
\end{align}
Therefore, taking
$t_1 \ge 1$
sufficiently large,
we have, for $t_0 \ge t_1$,
\begin{align}
    \left( \lambda + \frac{\alpha}{2-\alpha} \right)
    \int_{\Omega} |\partial_t u|^2
    \Psi^{\lambda + \frac{\alpha}{2-\alpha} -1} \,dx
    \le
    \frac{1}{4}
    \int_{\Omega}a(x) |\partial_t u|^2
    \Psi^{\lambda + \frac{\alpha}{2-\alpha}}\,dx.
\end{align}
Using the above estimates to
\eqref{eq:1:lem:E1:1},
we deduce
\begin{align}
    \frac{d}{dt} E_1(t;t_0,\lambda) 
    &\le
    - \frac{1}{2} \int_{\Omega} a(x) |\partial_t u|^2 \Psi^{\lambda + \frac{\alpha}{2-\alpha}} \,dx \\
    &\quad
    + C \int_{\Omega}
    \left(
    |\nabla u|^2
    + |u|^{p+1}
    \right)
    \Psi^{\lambda + \frac{\alpha}{2-\alpha}-1} \,dx,
\end{align}
which completes the proof.
\end{proof}
\begin{lemma}\label{lem:1:E0}
Under the assumptions on Theorem \ref{thm1} (i),
for $t_0 \ge 1$ and $t > 0$,
we have
\begin{align}
    \frac{d}{dt} E_0(t;t_0,\lambda)
    &\le
    -
    \eta \int_{\Omega}
    \left(
        |\nabla u(t,x)|^2
        + |u(t,x)|^{p+1}
    \right)
    \Psi(t,x;t_0)^{\lambda} \,dx \\
    &\quad
    +
    C \int_{\Omega} |\partial_t u(t,x)|^2
    \Psi(t,x;t_0)^{\lambda} \,dx
\end{align}
with some positive constants
$\eta = \eta (n,\alpha,\delta,\varepsilon,\lambda)$
and
$C = C(n,\alpha,\delta,\varepsilon,\lambda)$.
\end{lemma}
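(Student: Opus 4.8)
The plan is to differentiate $E_0(t;t_0,\lambda)$ directly, substitute the equation \eqref{dwx} to eliminate $\partial_t^2 u$, and then use the supersolution property of the weight $\Phi_{\beta,\varepsilon}$ established in Proposition \ref{prop:super-sol} (iv) to extract a \emph{negative definite} term in $|u|^2$ that can be traded against the gradient and nonlinear terms via Lemma \ref{lem:deltaphi}. The key structural feature of $E_0$ is that it is a Morawetz-type (or ``multiplier $2u$'') energy: differentiating the term $\int 2u\,\partial_t u\,\Phi^{-1+2\delta}\,dx$ produces $\int 2|\partial_t u|^2\Phi^{-1+2\delta}$ together with $\int 2u\,\partial_t^2 u\,\Phi^{-1+2\delta}$, and the latter, after inserting $\partial_t^2 u = \Delta u - a(x)\partial_t u - |u|^{p-1}u$, yields $\int 2u\,\Delta u\,\Phi^{-1+2\delta}$, a damping cross term $-\int 2a(x)u\,\partial_t u\,\Phi^{-1+2\delta}$, and the absorbing term $-\int 2|u|^{p+1}\Phi^{-1+2\delta}$. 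Simultaneously, differentiating $\int a(x)|u|^2\Phi^{-1+2\delta}\,dx$ gives $\int 2a(x)u\,\partial_t u\,\Phi^{-1+2\delta}$ (which cancels the damping cross term above) plus a term carrying the time derivative of the weight, $(-1+2\delta)\int(2u\,\partial_t u + a(x)|u|^2)\Phi^{-2+2\delta}\,\partial_t\Phi$.

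First I would apply Lemma \ref{lem:deltaphi} to the crucial term $\int 2u\,\Delta u\,\Phi^{-1+2\delta}\,dx$ (legitimate since in the reduction of Section \ref{sec:case1} we work with compactly supported strong solutions on a bounded domain), obtaining the favorable $-\frac{2\delta}{1-\delta}\int|\nabla u|^2\Phi^{-1+2\delta}$ together with $(1-2\delta)\int u^2(\Delta\Phi)\Phi^{-2+2\delta}$. Next I would combine this Laplacian-of-weight term with the time-derivative-of-weight term coming from $E_0$: using $\partial_t\Phi_{\beta,\varepsilon} = -\beta\Phi_{\beta+1,\varepsilon}$ from Proposition \ref{prop:super-sol} (i), the $a(x)|u|^2$ contributions assemble into the combination $a(x)\partial_t\Phi - \Delta\Phi$ (up to the factor $1-2\delta$), which by Proposition \ref{prop:super-sol} (iv) is bounded below by $c\,a(x)\Psi^{-\beta-1}$. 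Since $\beta = \lambda/(1-2\delta)$ and $\Phi^{-1+2\delta}\sim\Psi^{\beta(1-2\delta)} = \Psi^{\lambda}$, and likewise $a(x)\Psi^{-\beta-1}\cdot\Psi^{\cdots}$ matches the target $\Psi^{\lambda}$ scaling, this produces a genuinely negative $|u|^2$-type contribution rather than a harmful one. The absorbing term $-2\int|u|^{p+1}\Phi^{-1+2\delta}$ is already negative and, after bounding $\Phi^{-1+2\delta}\ge c\Psi^{\lambda}$ via Proposition \ref{prop:super-sol} (iii), supplies the $-\eta\int|u|^{p+1}\Psi^{\lambda}$ term in the claim.

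The remaining cross terms involving $2u\,\partial_t u$ multiplied by $\partial_t\Phi$ must be absorbed: I would estimate $|2u\,\partial_t u\,\partial_t\Phi|$ by Young's inequality, splitting it into a small multiple of the already-gained $|u|^2$ term (controlled by the $a(x)\Psi^{-\beta-1}$ coercivity) plus a multiple of $\int|\partial_t u|^2\Psi^{\lambda}$, which is exactly the error term $C\int|\partial_t u|^2\Psi^{\lambda}$ permitted on the right-hand side. The $\int 2|\partial_t u|^2\Phi^{-1+2\delta}$ term from the very first differentiation likewise feeds into this same error term after the equivalence $\Phi^{-1+2\delta}\sim\Psi^{\lambda}$. \textbf{The main obstacle} I anticipate is bookkeeping the exponents consistently: one must verify that the weight powers produced by $\Delta\Phi$, by $\partial_t\Phi=-\beta\Phi_{\beta+1}$, and by the $\Phi^{-1+2\delta}$ prefactor all collapse to the single target power $\Psi^{\lambda}$ (equivalently $\Psi^{-\beta-1}\cdot\Psi^{(\beta+1)(1-2\delta)}$ with the arithmetic $\beta(1-2\delta)=\lambda$), so that the coercive gain from Proposition \ref{prop:super-sol} (iv) and the $|\nabla u|^2$ gain from Lemma \ref{lem:deltaphi} are delivered at precisely the weight $\Psi^{\lambda}$ claimed, with $\eta$ and $C$ depending only on $n,\alpha,\delta,\varepsilon,\lambda$ and in particular independent of $t_0$.
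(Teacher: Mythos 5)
Your proposal follows essentially the same route as the paper's proof: differentiate $E_0$, insert the equation so the damping cross terms cancel, apply Lemma \ref{lem:deltaphi} to the $u\Delta u$ term, assemble the resulting $\Delta\Phi$ term with the $\partial_t\Phi$ term into $a(x)\partial_t\Phi_{\beta,\varepsilon}-\Delta\Phi_{\beta,\varepsilon}$ and invoke Proposition \ref{prop:super-sol} (iv), then absorb the $u\,\partial_t u\,\partial_t\Phi$ cross term by Schwarz/Young against half of that coercive gain, with exactly the exponent arithmetic $\beta(1-2\delta)=\lambda$ you describe. The only detail worth adding is the trivial case $\lambda=0$ (i.e.\ $\beta=0$), where $\Phi_{\beta,\varepsilon}\equiv 1$ so the weight-derivative terms vanish identically and Proposition \ref{prop:super-sol} (iv), which requires $\beta>0$, is not needed — the paper notes this explicitly.
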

\begin{proof}
Differentiating
$E_0(t;t_0,\lambda)$
and using the equation \eqref{dwx}
yield
\begin{align}
    \frac{d}{dt} E_0(t;t_0,\lambda)
    &=
    \int_{\Omega}
    \left( 
    2 |\partial_t u|^2
    + 2u \partial_t^2 u
    + 2 a(x) u \partial_t u
    \right)
    \Phi_{\beta,\varepsilon}^{-1+2\delta} \,dx \\
    &\quad
    - (1-2\delta)
    \int_{\Omega}
    \left(
        2u \partial_t u
        + a(x) |u|^2
    \right)
    \Phi_{\beta,\varepsilon}^{-2+2\delta}
    \partial_t \Phi_{\beta,\varepsilon}\,dx.
\end{align}
Using the equation \eqref{dwx},
we have
\begin{align}
    \frac{d}{dt} E_0(t;t_0,\lambda)
    &=
    2 \int_{\Omega} |\partial_t u|^2
    \Phi_{\beta,\varepsilon}^{-1+2\delta} \,dx
    +
    2 \int_{\Omega} u \Delta u
    \Phi_{\beta,\varepsilon}^{-1+2\delta} \,dx \\
    &\quad
    -2 \int_{\Omega} |u|^{p+1} \Phi_{\beta,\varepsilon}^{-1+2\delta} \,dx \\
    &\quad
    - (1-2\delta)
    \int_{\Omega}
    \left(
        2u \partial_t u
        + a(x) |u|^2
    \right)
    \Phi_{\beta,\varepsilon}^{-2+2\delta}
    \partial_t \Phi_{\beta,\varepsilon}\,dx.
\end{align}
Applying Lemma \ref{lem:deltaphi}
with
$\Phi = \Phi_{\beta,\varepsilon}$
to the second term of the
right-hand side,
one obtains
\begin{align}
    \frac{d}{dt} E_0(t;t_0,\lambda)
    &\le
    2 \int_{\Omega} |\partial_t u|^2
    \Phi_{\beta,\varepsilon}^{-1+2\delta}\,dx
    -
    \frac{2\delta}{1-\delta} \int_{\Omega}
    |\nabla u|^2 \Phi_{\beta,\varepsilon}^{-1+2\delta} \,dx \\
    &\quad
    -2 \int_{\Omega} |u|^{p+1} \Phi_{\beta,\varepsilon}^{-1+2\delta} \,dx \\
    &\quad
    - 
    2(1-2\delta)
    \int_{\Omega}
    u \partial_t u
    \Phi_{\beta,\varepsilon}^{-2+2\delta}
    \partial_t \Phi_{\beta,\varepsilon}\,dx \\
    &\quad
    - (1-2\delta)
    \int_{\Omega}
        |u|^2
    \Phi_{\beta,\varepsilon}^{-2+2\delta}
    \left(
    a(x) \partial_t \Phi_{\beta,\varepsilon}
    - \Delta \Phi_{\beta,\varepsilon}
    \right)
    \,dx.
\label{eq:1:E0est1}
\end{align}
Next, we estimate the terms in the right-hand side.
First, we remark that
if
$\lambda = 0$ (i.e., $\beta = 0$),
then the last two terms in \eqref{eq:1:E0est1}
vanish, since
$\Phi_{\beta,\varepsilon}\equiv 1$.
For the case
$\beta > 0$,
by Proposition \ref{prop:super-sol} (ii) and (iv),
we have
\begin{align}
    \int_{\Omega}
        |u|^2
    \Phi_{\beta,\varepsilon}^{-2+2\delta}
    \left(
    a(x) \partial_t \Phi_{\beta,\varepsilon}
    - \Delta \Phi_{\beta,\varepsilon}
    \right)
    \,dx
    &\ge
    \eta_1
    \int_{\Omega} a(x) |u|^2
    \Psi^{\lambda - 1} \,dx
\end{align}
with some constant
$\eta_1 = \eta_1(n,\alpha,\delta,\varepsilon,\lambda)>0$.
Moreover, Proposition \ref{prop:super-sol} (i), (ii), and (iii) imply
\begin{align}
    |u \partial_t u \Phi_{\beta,\varepsilon}^{-2+2\delta}
    \partial_t \Phi_{\beta,\varepsilon} |
    &\le
    C |u| |\partial_t u|
    |\Phi_{\beta,\varepsilon}^{-2+2\delta}|
    |\Phi_{\beta+1,\varepsilon}|
    \le
    C |u| |\partial_t u| \Psi^{\lambda -1}.
\end{align}
This and the Schwarz inequality lead to
\begin{align}
    &\left|2(1-2\delta)
    \int_{\Omega}
    u \partial_t u
    \Phi_{\beta,\varepsilon}^{-2+2\delta}
    \partial_t \Phi_{\beta,\varepsilon}\,dx\right| \\
    &\le
    C
    \int_{\Omega} |u| |\partial_t u| 
    \Psi^{\lambda-1} \,dx\\
    &\le
    C 
    \left( \int_{\Omega} a(x) |u|^2 \Psi^{\lambda -1} \,dx \right)^{1/2}
    \left( \int_{\Omega} a(x)^{-1} |\partial_t u|^2 
    \Psi^{\lambda-1} \,dx \right)^{1/2} \\
    &\le
    \frac{\eta_1}{2}
    \int_{\Omega} a(x) |u|^2 \Psi^{\lambda -1} \,dx
    + C
    \int_{\Omega} |\partial_t u|^2
    \Psi^{\lambda} \,dx
\end{align}
with some
$C = C(n,a,\delta,\varepsilon,\lambda) > 0$.
Summarizing the above computations,
we see that for both cases
$\lambda = 0$ and $\lambda > 0$,
the last two terms of \eqref{eq:1:E0est1}
can be estimated as
\begin{align}
    &- 
    2(1-2\delta)
    \int_{\Omega}
    u \partial_t u
    \Phi_{\beta,\varepsilon}^{-2+2\delta}
    \partial_t \Phi_{\beta,\varepsilon}\,dx \\
    &\quad - (1-2\delta)
    \int_{\Omega}
        |u|^2
    \Phi_{\beta,\varepsilon}^{-2+2\delta}
    \left(
    a(x) \partial_t \Phi_{\beta,\varepsilon}
    - \Delta \Phi_{\beta,\varepsilon}
    \right)
    \,dx \\
    &\le 
    C
    \int_{\Omega} |\partial_t w|^2
    \Psi^{\lambda} \,dx.
\end{align}
Finally, from Proposition \ref{prop:super-sol}
(ii) and (iii), one obtains
\begin{align}
    2 \int_{\Omega} |\partial_t u|^2
    \Phi_{\beta,\varepsilon}^{-1+2\delta}\,dx
    &\le
    C \int_{\Omega} |\partial_t u|^2 \Psi^{\lambda} \,dx
\end{align}
and
\begin{align}
    \frac{2\delta}{1-\delta} \int_{\Omega}
    |\nabla u|^2 \Phi_{\beta,\varepsilon}^{-1+2\delta} \,dx
    +
    2 \int_{\Omega} |u|^{p+1} \Phi_{\beta,\varepsilon}^{-1+2\delta} \,dx
    &\ge
    \eta \int_{\Omega}
    \left(
        |\nabla u|^2
        + |u|^{p+1}
    \right)
    \Psi^{\lambda} \,dx
\end{align}
with some positive constants
$C = C(n,\alpha,\delta,\varepsilon,\lambda)$
and
$\eta = \eta (n,\alpha,\delta,\varepsilon,\lambda)$.
Putting this all together,
we deduce from \eqref{eq:1:E0est1} that
\begin{align}
    \frac{d}{dt}E_0(t;t_0,\lambda)
    &\le
    - \eta \int_{\Omega}
    \left( 
    |\nabla u|^2 + |u|^{p+1}
    \right)
    \Psi^{\lambda} \,dx \\
    &\quad +
    C \int_{\Omega} |\partial_t u|^2 \Psi^{\lambda} \,dx,
\end{align}
and the proof is complete.
\end{proof}
Combining Lemmas \ref{lem:1:E1}
and \ref{lem:1:E0},
we have the following estimate
for $E_{\ast}(t;t_0,\lambda,\nu)$.
\begin{lemma}\label{lem:1:E0+E1}
Under the assumptions on Theorem \ref{thm1} (i),
there exist constants
$\nu_{\ast} = \nu_{\ast}(n,a,\delta,\varepsilon,\lambda) \in (0,\nu_0)$
and
$t_2 = t_2(n,a,p,\delta,\varepsilon,\lambda, \nu_{\ast}) \ge 1$
such that for
$t_0 \ge t_2$
and
$t > 0$,
we have
\begin{align}
    &E_{\ast}(t;t_0,\lambda,\nu_{\ast})
    + \int_0^t \int_{\Omega}
    a(x) |\partial_t u(s,x)|^2
    \Psi(s,x;t_0)^{\lambda + \frac{\alpha}{2-\alpha}} \,dx ds \\
    &\quad 
    +
    \int_0^t \int_{\Omega}
    (|\nabla u(s,x)|^2 + |u(s,x)|^{p+1})
    \Psi(s,x;t_0)^{\lambda} \,dxds \\
    &\le
    C E_{\ast}(0;t_0,\lambda,\nu_{\ast})
\end{align}
with some constant
$C = C(n,a,\delta,\varepsilon,\lambda,\nu_{\ast})>0$.
\end{lemma}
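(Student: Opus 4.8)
The plan is to combine the two differential inequalities from Lemmas~\ref{lem:1:E1} and~\ref{lem:1:E0} into a single estimate for $\frac{d}{dt}E_{\ast}(t;t_0,\lambda,\nu)$, choosing the coupling parameter $\nu$ small enough that all the ``bad'' terms are absorbed, and then integrating in time. First I would multiply the conclusion of Lemma~\ref{lem:1:E0} by $\nu$ and add it to that of Lemma~\ref{lem:1:E1}, recalling the definition $E_{\ast} = E_1 + \nu E_0$. This produces
\begin{align}
    \frac{d}{dt}E_{\ast}(t;t_0,\lambda,\nu)
    &\le
    -\frac{1}{2}\int_{\Omega} a(x)|\partial_t u|^2 \Psi^{\lambda+\frac{\alpha}{2-\alpha}}\,dx
    -\nu\eta\int_{\Omega}\left(|\nabla u|^2+|u|^{p+1}\right)\Psi^{\lambda}\,dx \\
    &\quad
    +C\int_{\Omega}\left(|\nabla u|^2+|u|^{p+1}\right)\Psi^{\lambda+\frac{\alpha}{2-\alpha}-1}\,dx
    +\nu C\int_{\Omega}|\partial_t u|^2 \Psi^{\lambda}\,dx.
\end{align}

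The key observation is that the two error terms on the second line carry an \emph{extra} negative power of $\Psi$ relative to the good terms they must be absorbed into. For the gradient and nonlinear error term I would use $\Psi^{\frac{\alpha}{2-\alpha}-1} = \Psi^{-\frac{2(1-\alpha)}{2-\alpha}} \le C t_0^{-\frac{2(1-\alpha)}{2-\alpha}}$, which is small once $t_0$ is large since $\alpha<1$; this lets the error term be dominated by $\frac{\nu\eta}{2}\int_{\Omega}(|\nabla u|^2+|u|^{p+1})\Psi^{\lambda}\,dx$ provided $t_0 \ge t_2$ is chosen large depending on $\nu$. For the $|\partial_t u|^2$ term coming from $E_0$, I would invoke the pointwise bound \eqref{eq:1:Psi:-1}, namely $\Psi^{-1}\le C t_0^{-\frac{2(1-\alpha)}{2-\alpha}}a(x)$, so that $\nu C\int_{\Omega}|\partial_t u|^2\Psi^{\lambda}\,dx \le \nu C t_0^{-\frac{2(1-\alpha)}{2-\alpha}}\int_{\Omega}a(x)|\partial_t u|^2\Psi^{\lambda+\frac{\alpha}{2-\alpha}}\,dx$, which is absorbed by the first good term once $\nu t_0^{-\frac{2(1-\alpha)}{2-\alpha}}$ is small. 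After these absorptions the differential inequality becomes
\begin{align}
    \frac{d}{dt}E_{\ast}
    +\frac{1}{4}\int_{\Omega}a(x)|\partial_t u|^2\Psi^{\lambda+\frac{\alpha}{2-\alpha}}\,dx
    +\frac{\nu\eta}{2}\int_{\Omega}\left(|\nabla u|^2+|u|^{p+1}\right)\Psi^{\lambda}\,dx
    \le 0.
\end{align}

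The delicate point is the order of quantifier choices: $\eta$ and the various $C$ are fixed by the previous two lemmas, so I would first fix $\nu=\nu_{\ast}\in(0,\nu_0)$ (the constraint $\nu<\nu_0$ from \eqref{eq:3:en:pos2} guarantees $E_{\ast}$ controls $E_1$ and the weighted $L^2$ norm from below, ensuring positivity), and only afterward choose $t_2 \ge \max\{t_1,1\}$ large enough, depending on this now-fixed $\nu_{\ast}$, to carry out both absorptions simultaneously. I expect this bookkeeping to be the main obstacle, since the largeness threshold for $t_0$ depends on $\nu_{\ast}$ while the smallness of $\nu_{\ast}$ must be fixed independently of $t_0$; one must check there is no circularity, which there is not because $\nu_0$ depends only on $n,a,\delta,\varepsilon,\lambda$ and not on $t_0$. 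Finally I would integrate the resulting inequality over $[0,t]$ and drop the (nonnegative) boundary term $E_{\ast}(t;t_0,\lambda,\nu_{\ast})$ where convenient, yielding the stated estimate with the time-integrated dissipation terms on the left and $CE_{\ast}(0;t_0,\lambda,\nu_{\ast})$ on the right.
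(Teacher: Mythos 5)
Your overall architecture coincides with the paper's proof: add $\nu$ times Lemma \ref{lem:1:E0} to Lemma \ref{lem:1:E1}, absorb the two error terms, integrate in time; your quantifier ordering (fix $\nu_{\ast}\in(0,\nu_0)$ first, then choose $t_2=t_2(\nu_{\ast})$) and your absorption of the gradient/nonlinearity error via $\Psi^{\frac{\alpha}{2-\alpha}-1}\le t_0^{-\frac{2(1-\alpha)}{2-\alpha}}$ are exactly what the paper does.

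However, your absorption of the term $C\nu\int_{\Omega}|\partial_t u|^2\Psi^{\lambda}\,dx$ rests on a false inequality. What \eqref{eq:1:Psi:-1} actually yields is
\begin{align}
    \int_{\Omega}|\partial_t u|^2\Psi^{\lambda}\,dx
    \le C t_0^{-\frac{2(1-\alpha)}{2-\alpha}}
    \int_{\Omega}a(x)|\partial_t u|^2\Psi^{\lambda+1}\,dx,
\end{align}
with exponent $\lambda+1$, not $\lambda+\frac{\alpha}{2-\alpha}$; since $\Psi^{\lambda+1}=\Psi^{\lambda+\frac{\alpha}{2-\alpha}}\Psi^{\frac{2(1-\alpha)}{2-\alpha}}$ and $\Psi^{\frac{2(1-\alpha)}{2-\alpha}}\ge (t_0+t)^{\frac{2(1-\alpha)}{2-\alpha}}$ is unbounded in $t$, this right-hand side cannot be dominated by the dissipation term $\frac12\int_{\Omega}a(x)|\partial_t u|^2\Psi^{\lambda+\frac{\alpha}{2-\alpha}}\,dx$, no matter how large $t_0$ is. The inequality you wrote, carrying both the weight $\Psi^{\lambda+\frac{\alpha}{2-\alpha}}$ and the small factor $t_0^{-\frac{2(1-\alpha)}{2-\alpha}}$, would require the pointwise bound $\Psi^{-\frac{\alpha}{2-\alpha}}\le Ct_0^{-\frac{2(1-\alpha)}{2-\alpha}}a(x)$, which is false: for $|x|$ large with $t$ fixed one has $\Psi\sim A_{\varepsilon}(x)\sim\langle x\rangle^{2-\alpha}$, so $\Psi^{-\frac{\alpha}{2-\alpha}}\sim\langle x\rangle^{-\alpha}\sim a(x)/a_0$, i.e.\ the ratio $\Psi^{-\frac{\alpha}{2-\alpha}}/a(x)$ is bounded below by a positive constant independent of $t_0$ (for $\alpha=0$ it fails even more simply, since $a$ is bounded). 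The correct mechanism, which is the one used in the paper, is to observe from \eqref{assum:a} and \eqref{A2} that $1\le C\,a(x)\Psi^{\frac{\alpha}{2-\alpha}}$ pointwise with $C=C(n,a,\alpha,\varepsilon)$ \emph{independent of} $t_0$, hence $\int_{\Omega}|\partial_t u|^2\Psi^{\lambda}\,dx\le C\int_{\Omega}a(x)|\partial_t u|^2\Psi^{\lambda+\frac{\alpha}{2-\alpha}}\,dx$ with no smallness gain, and then to perform this absorption purely by the smallness of $\nu_{\ast}$ (e.g.\ $C\nu_{\ast}\le\frac14$). With this replacement your argument goes through verbatim, and the bookkeeping becomes cleaner: $\nu_{\ast}$ alone handles the $|\partial_t u|^2$ term, and $t_2(\nu_{\ast})$ alone handles the gradient/nonlinearity term.
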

\begin{proof}
Let
$\nu \in (0,\nu_0)$,
where
$\nu_0$
is taken so that \eqref{eq:3:en:pos1} holds.
From the definition of
$E_{\ast}(t;t_0,\lambda,\nu)$
and
Lemmas \ref{lem:1:E1} and \ref{lem:1:E0},
one has
\begin{align}
    \frac{d}{dt} E_{\ast}(t;t_0,\lambda,\nu)
    &=
    \frac{d}{dt} E_1(t;t_0,\lambda)
    + \nu \frac{d}{dt} E_0(t;t_0,\lambda) \\
    &\le
    - \frac{1}{2} \int_{\Omega} a(x) |\partial_t u|^2 \Psi^{\lambda + \frac{\alpha}{2-\alpha}} \,dx \\
    &\quad
    + C \int_{\Omega}
    \left(
    |\nabla u|^2
    + |u|^{p+1}
    \right)
    \Psi^{\lambda + \frac{\alpha}{2-\alpha}-1} \,dx \\
    &\quad -
    \nu \eta \int_{\Omega}
    \left( 
    |\nabla u|^2 + |u|^{p+1}
    \right)
    \Psi^{\lambda} \,dx \\
    &\quad +
    C \nu \int_{\Omega} |\partial_t u|^2 \Psi^{\lambda} \,dx
\label{eq:1:estEast1}
\end{align}
for $t_0 \ge t_1$ and $t > 0$,
where $t_1 \ge 1$ is determined in
Lemma \ref{lem:1:E1}.
Noting that
\eqref{assum:a} and \eqref{A2} imply
\begin{align}
    |\partial_t u|^2 \Psi^{\lambda}
    &\le
    C  \langle x \rangle^{-\alpha}
    A_{\varepsilon}(x)^{\frac{\alpha}{2-\alpha}}
    |\partial_t u|^2 \Psi^{\lambda}
    \le
    C a(x) |\partial_t u|^2 \Psi^{\lambda + \frac{\alpha}{2-\alpha}}
\end{align}
with some constant
$C = C(n,a,\alpha,\varepsilon) > 0$,
and taking
$\nu = \nu_{\ast}$
with sufficiently small
$\nu_{\ast} \in (0,\nu_0)$,
we deduce
\begin{align}
    - \frac{1}{2} \int_{\Omega} a(x) |\partial_t u|^2 \Psi^{\lambda + \frac{\alpha}{2-\alpha}} \,dx 
    + 
    C \nu_{\ast}
    \int_{\Omega} |\partial_t u|^2 \Psi^{\lambda} \,dx
    &\le
    - \frac{1}{4} \int_{\Omega} a(x) |\partial_t u|^2 \Psi^{\lambda + \frac{\alpha}{2-\alpha}} \,dx.
\end{align}
Next, by
$\Psi^{\frac{\alpha}{2-\alpha}-1}
\le
(t_0+t)^{\frac{\alpha}{2-\alpha}-1}$
and taking $t_2 \ge t_1$ sufficiently large depending on
$\nu_{\ast}$,
one obtains
\begin{align}
    &C \int_{\Omega}
    \left(
    |\nabla u|^2
    + |u|^{p+1}
    \right)
    \Psi^{\lambda + \frac{\alpha}{2-\alpha}-1} \,dx
    - \nu_{\ast} \eta \int_{\Omega}
    \left( 
    |\nabla u|^2 + |u|^{p+1}
    \right)
    \Psi^{\lambda} \,dx \\
    &\le
    - \frac{\nu_{\ast} \eta}{2} \int_{\Omega}
    \left( 
    |\nabla u|^2 + |u|^{p+1}
    \right)
    \Psi^{\lambda} \,dx
\end{align}
for $t_0 \ge t_2$.
Finally, plugging the above estimates
into \eqref{eq:1:estEast1} with
$\nu = \nu_{\ast}$,
we conclude
\begin{align}
    \frac{d}{dt} E_{\ast}(t;t_0,\lambda,\nu_{\ast})
    &\le
    - \frac{1}{4} \int_{\Omega} a(x) |\partial_t u|^2 \Psi^{\lambda + \frac{\alpha}{2-\alpha}} \,dx \\
    &\quad -
    \frac{\nu_{\ast} \eta}{2} \int_{\Omega}
    \left( 
    |\nabla u|^2 + |u|^{p+1}
    \right)
    \Psi^{\lambda} \,dx
\end{align}
for $t_0 \ge t_2$ and $t > 0$.
Integrating it over
$[0,t]$,
we have the desired estimate.
\end{proof}
\begin{lemma}\label{lem:1:tildeE}
Under the assumptions on
Theorem \ref{thm1} (i),
there exists a constant
$t_2 = t_2(n,a,p,\delta,\varepsilon,\lambda) \ge 1$
such that
for $t_0 \ge t_2$
and $t > 0$, we have
\begin{align}
    \tilde{E}(t;t_0,\lambda)
    + \int_{\Omega} a(x) |u(t,x)|^2
    \Psi(t,x;t_0)^{\lambda} \,dx
    &\le
    C I_0[u_0,u_1]
\label{eq:1:lem:1:tildeE}
\end{align}
with some constant 
$C = C(n,a,p,\delta,\varepsilon,\lambda,\nu_{\ast},t_0) > 0$.
\end{lemma}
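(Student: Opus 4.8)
The plan is to control $\tilde{E}(t;t_0,\lambda)$ by differentiating it directly and then integrating in time, feeding the resulting space-time integrals into the dissipation estimate of Lemma~\ref{lem:1:E0+E1}. Writing $\tilde{E}(t;t_0,\lambda)=(t_0+t)F(t)$ with $F(t)=\int_{\Omega}\big[\frac{1}{2}(|\partial_t u|^2+|\nabla u|^2)+\frac{1}{p+1}|u|^{p+1}\big]\Psi^{\lambda}\,dx$, I first compute $F'(t)$ exactly as in the proof of Lemma~\ref{lem:1:E1}, but now with the weight exponent $\lambda$ in place of $\lambda+\frac{\alpha}{2-\alpha}$; since $\partial_t\Psi=1$, integration by parts together with the equation \eqref{dwx} yields
\[
    F'(t)=-\int_{\Omega}a(x)|\partial_t u|^2\Psi^{\lambda}\,dx
    -\lambda\int_{\Omega}\partial_t u\,(\nabla u\cdot\nabla\Psi)\Psi^{\lambda-1}\,dx
    +\lambda\int_{\Omega}\Big[\tfrac12(|\partial_t u|^2+|\nabla u|^2)+\tfrac{1}{p+1}|u|^{p+1}\Big]\Psi^{\lambda-1}\,dx,
\]
so that $\frac{d}{dt}\tilde{E}=F(t)+(t_0+t)F'(t)$.

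Second, I would estimate the cross term by the Schwarz inequality, splitting off $\frac{t_0+t}{2}\int a(x)|\partial_t u|^2\Psi^{\lambda}$ (absorbed by the first term of $(t_0+t)F'$) and leaving $C(t_0+t)\int|\nabla u|^2\frac{|\nabla\Psi|^2}{a(x)\Psi}\Psi^{\lambda-1}$; by \eqref{eq:1:Psi:quot} the quotient is bounded, and since $\Psi\ge t_0+t$ we have the crucial elementary bound $(t_0+t)\Psi^{-1}\le 1$. The same bound turns every factor $(t_0+t)\Psi^{\lambda-1}$ appearing in $(t_0+t)F'$ into $\Psi^{\lambda}$. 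Collecting terms (and discarding the nonpositive damping remainder $-\frac{t_0+t}{2}\int a|\partial_t u|^2\Psi^{\lambda}$) gives
\[
    \frac{d}{dt}\tilde{E}(t;t_0,\lambda)\le C\int_{\Omega}\big(|\partial_t u|^2+|\nabla u|^2+|u|^{p+1}\big)\Psi^{\lambda}\,dx.
\]

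Third, to make the right-hand side time-integrable I would invoke the pointwise inequality $|\partial_t u|^2\Psi^{\lambda}\le C a(x)|\partial_t u|^2\Psi^{\lambda+\frac{\alpha}{2-\alpha}}$, valid because $a(x)\Psi^{\frac{\alpha}{2-\alpha}}\ge c>0$ by \eqref{assum:a} and \eqref{A2}, exactly as in the proof of Lemma~\ref{lem:1:E0+E1}. Integrating over $[0,t]$ and applying Lemma~\ref{lem:1:E0+E1} then bounds all three space-time integrals by $C E_{\ast}(0;t_0,\lambda,\nu_{\ast})$, so that $\tilde{E}(t;t_0,\lambda)\le\tilde{E}(0;t_0,\lambda)+C E_{\ast}(0;t_0,\lambda,\nu_{\ast})$. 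For the weighted $L^2$-term I would use the coercivity \eqref{eq:3:en:pos2}, which gives $\frac{\nu_{\ast}}{2}\int_{\Omega}a(x)|u|^2\Psi^{\lambda}\,dx\le E_{\ast}(t;t_0,\lambda,\nu_{\ast})\le C E_{\ast}(0;t_0,\lambda,\nu_{\ast})$, the last step again from Lemma~\ref{lem:1:E0+E1} after dropping the nonnegative dissipation integrals. It remains to bound the data quantities $\tilde{E}(0;t_0,\lambda)$ and $E_{\ast}(0;t_0,\lambda,\nu_{\ast})$ by $C I_0[u_0,u_1]$: using $\Psi(x,0;t_0)\le C(t_0)\langle x\rangle^{2-\alpha}$ from \eqref{A2}, the bound $\Phi_{\beta,\varepsilon}(x,0;t_0)^{-1+2\delta}\le C\Psi^{\lambda}$ from Proposition~\ref{prop:super-sol}~(iii), the estimate $a(x)\le C\langle x\rangle^{-\alpha}$, and Young's inequality $2|u_0 u_1|\le\langle x\rangle^{-\alpha}|u_0|^2+\langle x\rangle^{\alpha}|u_1|^2$ applied to the cross term in $E_0(0)$, one checks that every data integrand is dominated by the weights $\langle x\rangle^{\alpha+\lambda(2-\alpha)}$ (on $|u_1|^2,|\nabla u_0|^2,|u_0|^{p+1}$) and $\langle x\rangle^{-\alpha+\lambda(2-\alpha)}$ (on $|u_0|^2$) appearing in \eqref{assum:ini}.

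The main obstacle is the $|\partial_t u|^2$ term. In $\frac{d}{dt}\tilde{E}$ its only competing good term is the damping integral $(t_0+t)\int a|\partial_t u|^2\Psi^{\lambda}$, whose coefficient $a(x)\sim\langle x\rangle^{-\alpha}$ degenerates at spatial infinity, so it cannot absorb $\int|\partial_t u|^2\Psi^{\lambda}$ pointwise in time. The resolution is structural: one does \emph{not} absorb it locally in time, but rather integrates and relies on the space-time dissipation bound of Lemma~\ref{lem:1:E0+E1}, whose weight carries the extra factor $\Psi^{\frac{\alpha}{2-\alpha}}$ that precisely compensates the degeneracy of $a$. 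Thus the gain $\frac{\alpha}{2-\alpha}$ built into the weight of $E_1$ (as opposed to the bare $\Psi^{\lambda}$ of $\tilde{E}$) is exactly what closes the estimate, and keeping these two weights straight is the only delicate bookkeeping in the argument.
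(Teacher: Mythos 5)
Your proof is correct and follows essentially the same route as the paper: differentiate $\tilde{E}$, absorb the cross term into the damping via the Schwarz inequality and \eqref{eq:1:Psi:quot}, convert $|\partial_t u|^2\Psi^{\lambda}$ into $a(x)|\partial_t u|^2\Psi^{\lambda+\frac{\alpha}{2-\alpha}}$, integrate in time, and close with the space-time dissipation bound of Lemma \ref{lem:1:E0+E1} together with the coercivity \eqref{eq:3:en:pos2} and the data bound by $I_0[u_0,u_1]$. The only (immaterial) difference is bookkeeping: the paper adds $\mu\times$(the integrated $\tilde{E}$ inequality) to the conclusion of Lemma \ref{lem:1:E0+E1} with $\mu$ small, whereas you substitute that lemma's bounds on the space-time integrals directly, which is equivalent for the stated estimate.
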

\begin{proof}
Take the same constants
$\nu_{\ast}$ and $t_2$
as in Lemma \ref{lem:1:E0+E1}.
The integration by parts and
the equation \eqref{dwx} imply
\begin{align}
    \frac{d}{dt} \tilde{E}(t;t_0,\lambda)
    &=
    \int_{\Omega} 
    \left[
        \frac{1}{2}\left(
        |\partial_t u|^2 + |\nabla u|^2
        \right) 
        + \frac{1}{p+1} |u|^{p+1}
    \right]
    \left(
    \Psi + \lambda (t_0+t)
    \right)
    \Psi^{\lambda-1} \,dx \\
    &\quad + 
    (t_0+t) \int_{\Omega}
    \left(
        \partial_t u \partial_t^2 u + \nabla u \cdot \nabla \partial_t u
        + |u|^{p-1}u \partial_t u
    \right) 
    \Psi^{\lambda} \,dx \\
    &=
    \int_{\Omega} 
    \left[
        \frac{1}{2}\left(
        |\partial_t u|^2 + |\nabla u|^2
        \right) 
        + \frac{1}{p+1} |u|^{p+1}
    \right]
    \left(
    \Psi + \lambda (t_0+t)
    \right)
    \Psi^{\lambda-1} \,dx \\
    &\quad -
    (t_0+t) \int_{\Omega}
    a(x) |\partial_t u|^2 \Psi^{\lambda}\,dx
    - \lambda (t_0+t)
    \int_{\Omega}
    \partial_t u (\nabla u \cdot \nabla \Psi) \Psi^{\lambda -1}\,dx.
\end{align}
The last term of the right-hand side is
estimated as
\begin{align}
    - \lambda (t_0+t)
    \int_{\Omega} \partial_t u
    (\nabla u \cdot \nabla \Psi)
    \Psi^{\lambda-1} \,dx
    &\le
    \eta (t_0+t) \int_{\Omega}
    a(x) |\partial_t u|^2 \frac{|\nabla \Psi|^2}{a(x)}
    \Psi^{\lambda-1} \,dx \\
    &\quad +
    C (t_0+t) \int_{\Omega} |\nabla u|^2 \Psi^{\lambda-1} \,dx
\end{align}
for any $\eta > 0$.
Using \eqref{eq:1:Psi:quot}
and
taking
$\eta = \eta(n,\alpha,\varepsilon)$
sufficiently small,
we have
\begin{align}
    \frac{d}{dt} \tilde{E}(t;t_0,\lambda)
    &\le
    C \int_{\Omega} \left(
    |\partial_t u|^2 + |\nabla u|^2
    +  |u|^{p+1}
    \right)
    (\Psi + (t_0+t) )
    \Psi^{\lambda-1}
    \,dx \\
    &\quad
    - \frac{1}{2}(t_0+t)
    \int_{\Omega} 
    a(x) |\partial_t u|^2
    \Psi^{\lambda} \,dx.
\end{align}
Noting
$t_0 + t \le \Psi$
and
$a(x)^{-1} \le C \Psi^{\frac{\alpha}{2-\alpha}}$,
we estimate
\begin{align}
    \int_{\Omega} |\partial_t u|^2 
    (\Psi + \lambda(t_0+t))
    \Psi^{\lambda-1} \,dx
    \le C \int_{\Omega} a(x) |\partial_t u|^2 \Psi^{\lambda+\frac{\alpha}{2-\alpha}} \,dx.
\end{align}
Therefore,
integrating over $[0,t]$
yield
\begin{align}
    &\tilde{E}(t;t_0,\lambda)
    + \frac{1}{2} \int_0^t (t_0+s)
    \int_{\Omega} 
    a(x) |\partial_t u|^2
    \Psi^{\lambda} \,dx ds \\
    &\le
    \tilde{E}(0;t_0,\lambda)
    +
    C \int_0^t \int_{\Omega} a(x) |\partial_t u|^2 
    \Psi^{\lambda + \frac{\alpha}{2-\alpha}}
    \,dxds
    + 
    C \int_0^t \int_{\Omega}
    \left(
        |\nabla u|^2 + |u|^{p+1}
    \right)
    \Psi^{\lambda}
    \,dxds.
\end{align}
Now, we multiply the both sides of above inequality
by a sufficiently small constant
$\mu > 0$,
and add it and the conclusion of
Lemma \ref{lem:1:E0+E1}.
Then, we obtain
\begin{align}
    &\mu \tilde{E}(t;t_0,\lambda)
    + E_{\ast}(t;t_0,\lambda,\nu_{\ast}) \\
    &\quad +
    \int_0^t
    \int_{\Omega} 
    a(x) |\partial_t u|^2
    \left[
    \frac{\mu}{2}(t_0+s) +
    (1-C\mu) \Psi^{\frac{\alpha}{2-\alpha}}
    \right]
    \Psi^{\lambda} \,dx ds \\
    &\quad +
    (1-C\mu)
    \int_0^t \int_{\Omega}
    \left(
        |\nabla u|^2 + |u|^{p+1}
    \right)
    \Psi^{\lambda}
    \,dxds \\
    &\le
    \mu \tilde{E}(0;t_0,\lambda)
    + C E_{\ast} (0;t_0,\lambda,\nu_{\ast})
\label{eq:1:tildeE+East}
\end{align}
for $t_0 \ge t_2$ and $t > 0$.
Let us take
$\mu$ sufficiently small
so that
$1 - C\mu > 0$.
Then, the last three terms in the left-hand side can be dropped.
Finally, from the definitions of
$E_{\ast}(t;t_0,\lambda)$
and
$\tilde{E}(t;t_0,\lambda)$,
we can easily verify
\begin{align}
    \mu \tilde{E}(0;t_0,\lambda)
    + E_{\ast} (0;t_0,\lambda,\nu_{\ast})
    \le
    C I_0[u_0,u_1]
\end{align}
with some constant
$C = C (a,p,\lambda,t_0)>0$.
Thus, we conclude
\begin{align}
    &\tilde{E}(t;t_0,\lambda)
    + E_{\ast} (t;t_0,\lambda,\nu_{\ast})
    \le
    C I_0[u_0,u_1]
\end{align}
for $t_0 \ge t_2$ and $t > 0$.
This and the lower bound \eqref{eq:3:en:pos2}
of
$E_{\ast}(t;t_0,\lambda,\nu_{\ast})$
give the desired estimate.
\end{proof}
\begin{proof}[Proof of Theorem \ref{thm1} (i) for compactly supported initial data]
Take $\lambda \in [0,\frac{n-\alpha}{2-\alpha})$
as in the assumption \eqref{assum:ini},
and then choose
$\delta, \varepsilon \in (0,1/2)$
so that
$\lambda \in [0,(1-2\delta)\gamma_{\varepsilon})$
holds.
Moreover, take the same constants
$\nu_{\ast}$ and $t_2$
as in Lemmas \ref{lem:1:E0+E1} and \ref{lem:1:tildeE}.
By
\eqref{eq:3:en:pos2},
Lemmas \ref{lem:1:E0+E1} and \ref{lem:1:tildeE},
Definition \ref{def:1:energy},
and
$(t_0+t)^{\lambda} \le \Psi^{\lambda}$,
we have
\begin{align}
    (t_0+t)^{\lambda+1} E[u](t)
    + (t_0+t)^{\lambda} \int_{\Omega} a(x) |u(t,x)|^2 \,dx
    \le
    C I_0[u_0,u_1]
\label{eq:1:conc}
\end{align}
for $t_0 \ge t_2$ and $t > 0$
with some constant
$C = C(n,a,p,\delta,\varepsilon,\lambda,\nu_{\ast},t_0)>0$.
This completes the proof.
\end{proof}
\begin{remark}
From \eqref{eq:1:tildeE+East},
we have a slightly more general estimate
\begin{align}
    &\int_{\Omega}
    \left(
    |\partial_t u|^2
    + |\nabla u|^2
    + |u|^{p+1}
    \right)
    \left[
    (t_0+t)
    + \Psi^{\frac{\alpha}{2-\alpha}}
    \right] 
    \Psi^{\lambda}
    + \int_{\Omega} a(x) |u|^2 \Psi^{\lambda} \,dx
    \\
    &\quad +
    \int_0^t
    \int_{\Omega} 
    a(x) |\partial_t u|^2
    \left[
    (t_0+s)
    + \Psi^{\frac{\alpha}{2-\alpha}}
    \right]
    \Psi^{\lambda} \,dx ds \\
    &\quad +
    \int_0^t \int_{\Omega}
    \left(
        |\nabla u|^2 + |u|^{p+1}
    \right)
    \Psi^{\lambda}
    \,dxds \\
    &\le
    C I_0[u_0,u_1]
\end{align}
for $t_0 \ge t_2$ and $t > 0$.
Moreover, from the proof of Lemma \ref{lem:1:E0},
we can add the term
$\int_{0}^t \int_{\Omega}
a(x) |u|^2 \Psi^{\lambda-1} \,dxds$
to the left-hand side
when
$\lambda > 0$.
\end{remark}

\subsection{Proof for the general case}
Here, we give a proof of Theorem \ref{thm1} (i)
for non-compactly supported initial data.

Let
$(u_0,u_1) \in H_0^1(\Omega)\times L^2(\Omega)$
satisfy
$I_0[u_0,u_1] < \infty$
and let
$u$ be the corresponding mild solution to \eqref{dwx}.
We take a cut-off function
$\chi \in C_0^{\infty}(\mathbb{R}^n)$
such that 
\begin{align}
    0 \le \chi (x) \le 1 \ (x \in \mathbb{R}^n),
    \quad
    \chi(x)
    = \begin{cases}
        1 &(|x| \le 1),\\
        0 &(|x| \ge 2).
    \end{cases}
\end{align}
For each $j \in \mathbb{N}$,
we define
$\chi_j(x) = \chi(x/j)$.
Then, we have
\begin{align}
    &0 \le \chi_j (x) \le 1 \ (x \in \mathbb{R}^n),
    \quad
    \chi_j(x)
    = \begin{cases}
        1 &(|x| \le j),\\
        0 &(|x| \ge 2j),
    \end{cases}\\
    &
    |\nabla \chi_j(x)| \le \frac{C}{j} \ (x\in \mathbb{R}^n),
    \quad
    \supp \nabla \chi_j
    \subset \overline{B_{2j}(0) \setminus B_j(0)},
\end{align}
where the constant
$C$
is independent of $j$.

Let
$(u_0^{(j)}, u_1^{(j)}) = (\chi_j u_0, \chi_j u_1)$
and let
$u^{(j)}$
be the corresponding mild solution to \eqref{dwx}.
First, by definition, it is easily seen that
\begin{align}
    \lim_{j\to \infty} (u_0^{(j)}, u_1^{(j)})
    = (u_0, u_1)
    \quad \text{in} \quad H_0^1(\Omega) \times L^2(\Omega).
\label{eq:1:limuj}
\end{align}
Therefore, the continuous dependence on the initial data
(see Section \ref{sec:conti:dep})
yields
\begin{align}
    \lim_{j\to \infty} (u^{(j)}(t), \partial_t u^{(j)}(t))
    = (u(t), \partial_t u(t))
    \quad \text{in} \quad
    C([0,T]; H_0^1(\Omega))\cap C^1([0,T]; L^2(\Omega))
\end{align}
for any fixed
$T > 0$.
From this and the Sobolev embedding,
we deduce
\begin{align}\label{eq:3:contidep}
    \lim_{j\to \infty} E[u^{(j)}](t)
    = E[u](t)
\end{align}
for any $t \ge 0$.

We next show 
\begin{align}
    \lim_{j\to \infty} I_0[u_0^{(j)},u_1^{(j)}]
    = I_0[u_0,u_1].
\label{eq:1:limI0}
\end{align}
To prove this, we use the notation
\begin{align}
    &I_0[u_0,u_1; D] \\
    &:= \int_{D} \left[
    ( |u_1(x)|^2 + |\nabla u_0(x)|^2 + |u_0(x)|^{p+1} ) \langle x \rangle^{\alpha} + |u_0(x)|^2 \langle x \rangle^{-\alpha}
    \right] \langle x \rangle^{\lambda(2-\alpha)} \,dx 
\end{align}
for a region
$D \subset \Omega$.
Using the properties of
$\chi_j$ described above and
\begin{align}
    |\nabla (\chi_j u_0)|^2
    = \chi_j^2 |\nabla u_0|^2
    + 2 (\nabla \chi_j \cdot \nabla u_0) \chi_j u_0
    + |\nabla \chi_j|^2 |u_0|^2,
\end{align}
we calculate
\begin{align}
    |I_0[u_0,u_1]
    - I_0[u_0^{(j)}, u_1^{(j)}]|
    &\le
    I_0[u_0,u_1; \Omega\setminus B_j(0)] \\
    &\quad +
    \left| \int_{B_{2j}(0)\setminus B_j(0)}
    2 (\nabla \chi_j \cdot \nabla u_0) \chi_j u_0
    \langle x \rangle^{\alpha + \lambda(2-\alpha)}
    \,dx \right| \\
    &\quad + 
    \int_{B_{2j}(0)\setminus B_j(0)}
    |\nabla \chi_j|^2 |u_0|^2
    \langle x \rangle^{\alpha + \lambda(2-\alpha)}
    \,dx.
\label{eq:1:diffI0}
\end{align}
The Schwarz inequality gives
\begin{align}
    &\left|
    \int_{B_{2j}(0)\setminus B_j(0)}
    2 (\nabla \chi_j \cdot \nabla u_0) \chi_j u_0
    \langle x \rangle^{\alpha + \lambda(2-\alpha)} \,dx
    \right| \\
    &\le
    I_0[u_0,u_1; \Omega\setminus B_j(0)]
    +
    \int_{B_{2j}(0)\setminus B_j(0)}
    |\nabla \chi_j|^2 |u_0|^2
    \langle x \rangle^{\alpha + \lambda(2-\alpha)} \,dx.
\end{align}
Furthermore, using the estimate of
$\nabla \chi_j$,
one sees that
\begin{align}
    &\int_{B_{2j}(0) \setminus B_j(0)}
    |\nabla \chi_j |^2 |u_0|^2
    \langle x \rangle^{\alpha + \lambda(2-\alpha)} \,dx \\
    &\le
    C j^{-2} (1+ |2j|^2 )^{\alpha}
    \int_{B_{2j}(0) \setminus B_j(0)}
    |u_0|^2
    \langle x \rangle ^{-\alpha + \lambda(2-\alpha)} \,dx \\
    &\le
    C I_0[u_0,u_1;\Omega \setminus B_j(0)],
\end{align}
where the constant
$C$
is independent of
$j$.
Putting this all together into \eqref{eq:1:diffI0},
we have
\begin{align}
    |I_0[u_0,u_1]
    - I_0[u_0^{(j)}, u_1^{(j)}]|
    &\le
    C I_0[u_0,u_1; \Omega\setminus B_j(0)].
\end{align}
Since
$I_0[u_0,u_1] < \infty$,
the right-hand side tends to zero
as $j \to \infty$.
This proves \eqref{eq:1:limI0}.

Now we are at the position to
proof Theorem \ref{thm1} (i).

\begin{proof}[Proof of Theorem \ref{thm1} (i) for the general case]
Take the same constant
$t_2$
as in Lemmas \ref{lem:1:E0+E1} and \ref{lem:1:tildeE}.
Let
$\{ (u_0^{(j)}, u_1^{(j)}) \}_{j=1}^{\infty}$
be the sequence defined above and let
$u^{(j)}$
be the corresponding mild solution to \eqref{dwx}
with the initial data
$(u_0^{(j)}, u_1^{(j)})$.
Since each
$(u_0^{(j)}, u_1^{(j)})$
has the compact support,
one can apply the result
\eqref{eq:1:conc} in the previous subsection
to obtain
\begin{align}
    (t_0+t)^{\lambda+1} E[u^{(j)}](t)
    + (t_0+t)^{\lambda} \int_{\Omega} a(x) |u^{(j)}(t,x)|^2 \,dx
    \le
    C I_0[u_0^{(j)},u_1^{(j)}]
\end{align}
for $t_0 \ge t_2$ and $t > 0$.
Finally, using \eqref{eq:3:contidep} and \eqref{eq:1:limI0},
we have
\begin{align}
    (t_0+t)^{\lambda+1} E[u](t)
    + (t_0+t)^{\lambda} \int_{\Omega} a(x) |u(t,x)|^2 \,dx
    \le
    C I_0[u_0,u_1]
\end{align}
for $t_0 \ge t_2$ and $t > 0$,
which completes the proof.
\end{proof}

\section{Proof of Theorem \ref{thm1}: second part}
In this section, we prove Theorem \ref{thm1} (ii).
By the same approximation argument
described in Section \ref{sec:case1},
we may assume
$(u_0,u_1) \in
D(\mathcal{A}_D)$
and consider the strong solution
$u$.

First, we note that,
since the larger $\lambda$ is,
the stronger the assumption on the
initial data is.
Thus, without loss of generality,
we may assume that
$\lambda$
always satisfies
\begin{align}\label{eq:2:rstr:lambda}
    \lambda <
    \min\left\{ \frac{2}{p-1},
    \frac{4}{2-\alpha} \left( \frac{1}{p-1} - \frac{n-\alpha}{4} \right) \right\}
    + \varepsilon,
\end{align}
where 
$\varepsilon > 0$
is a sufficiently small constant
specified later.
This will be used for the estimate
of the remainder term.

In contrast to the previous section,
in the following, we shall use
only 
\begin{align}
\label{eq:2:Psi}
    \Theta(x,t;t_0) := t_0 + t + \langle x \rangle^{2-\alpha}
\end{align}
as a weight function,
and we define the following energies.
\begin{definition}\label{def:2:energy}
For a function
$u = u(t,x)$,
$\alpha \in [0,1)$,
$\lambda \in [0,\infty)$,
$\nu > 0$,
and
$t_0 \ge 1$,
we define
\begin{align}
    E_1(t;t_0,\lambda) &=
    \int_{\Omega} \left[
    \frac{1}{2}\left( |\partial_t u(t,x)|^2 + |\nabla u(t,x)|^2 \right)
    + \frac{1}{p+1} |u(t,x)|^{p+1}
    \right]
    \Theta(t,x;t_0)^{\lambda+\frac{\alpha}{2-\alpha}}
    \,dx,\\
    E_0(t;t_0,\lambda)
    &= \int_{\Omega}
    \left(
    2u(t,x) \partial_t u(t,x) + a(x) |u(t,x)|^2 
    \right)
    \Theta(t,x;t_0)^{\lambda} \,dx,\\
    E_{\ast}(t;t_0,\lambda,\nu)
    &=
    E_1(t;t_0,\lambda)
    + \nu E_0(t;t_0,\lambda),\\
    \tilde{E}(t;t_0,\lambda)
    &=
    (t_0+t) \int_{\Omega} \left[
    \frac{1}{2}\left( |\partial_t u(t,x)|^2 + |\nabla u(t,x)|^2 \right)
    + \frac{1}{p+1} |u(t,x)|^{p+1}
    \right]
    \Theta(t,x;t_0)^{\lambda}
    \,dx
\end{align}
for $t \ge 0$.
\end{definition}

Similarly to 
\eqref{eq:3:en:pos1}
and \eqref{eq:3:en:pos2},
we can prove the lower bound
\begin{align}\label{eq:4:en:pos}
    E_{\ast} (t;t_0,\lambda,\nu)
    \ge
    \frac{1}{2} E_1(t;t_0,\lambda)
    + \frac{\nu}{2}
    \int_{\Omega} a(x) |u(t,x)|^2
    \Theta(t,x;t_0)^{\lambda} \,dx,
\end{align}
provided that
$\nu \in (0,\nu_0)$
with some constant
$\nu_0 > 0$.

We start with the following
simple estimates for
$E_1(t;t_0,\lambda)$
and
$E_0(t;t_0,\lambda)$.

\begin{lemma}\label{lem:2:E1}
Under the assumptions on Theorem \ref{thm1} (ii),
there exists
$t_1 = t_1(n,\alpha,a_0,\lambda,\varepsilon) \ge 1$
such that for
$t_0 \ge t_1$
and $t > 0$, we have
\begin{align}
    \frac{d}{dt} E_1(t;t_0,\lambda) 
    &\le
    - \frac{1}{2} \int_{\Omega} a(x) |\partial_t u(t,x)|^2 \Theta(t,x;t_0)^{\lambda + \frac{\alpha}{2-\alpha}} \,dx \\
    &\quad +
    C \int_{\Omega}
    \left(
        |\nabla u(t,x)|^2
        + |u(t,x)|^{p+1}
    \right)
    \Theta(t,x;t_0)^{\lambda + \frac{\alpha}{2-\alpha}-1} \,dx
\end{align}
with some constant
$C = C(n,\alpha,a_0,p,\lambda) > 0$.
\end{lemma}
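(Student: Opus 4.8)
The plan is to mirror the argument for Lemma~\ref{lem:1:E1}, replacing the weight $\Psi$ by $\Theta$ and replacing the structural bound \eqref{A3} for $A_\varepsilon$ by explicit computations for $\langle x\rangle^{2-\alpha}$ together with the lower bound $a(x)\ge a_0\langle x\rangle^{-\alpha}$ from the hypotheses of part~(ii). Write $\kappa:=\lambda+\frac{\alpha}{2-\alpha}$. Since $\partial_t\Theta=1$, differentiating $E_1(t;t_0,\lambda)$ produces the contribution from differentiating $u$ and the weight-derivative contribution $\kappa\int_\Omega[\frac12(|\partial_t u|^2+|\nabla u|^2)+\frac{1}{p+1}|u|^{p+1}]\Theta^{\kappa-1}\,dx$. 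First I would substitute $\partial_t^2u=\Delta u-a(x)\partial_t u-|u|^{p-1}u$ from \eqref{dwx} into the first contribution and integrate by parts in space; since $u,\partial_t u\in H^1_0(\Omega)$ the boundary terms vanish, the terms $\int\partial_t u\,\Delta u\,\Theta^\kappa$ and $\int\nabla u\cdot\nabla\partial_t u\,\Theta^\kappa$ combine, and the cubic terms $\pm|u|^{p-1}u\,\partial_t u$ cancel, leaving exactly the dissipative term $-\int_\Omega a(x)|\partial_t u|^2\Theta^\kappa\,dx$ and the cross term $-\kappa\int_\Omega\partial_t u\,(\nabla u\cdot\nabla\Theta)\,\Theta^{\kappa-1}\,dx$. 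This is the analogue of \eqref{eq:1:lem:E1:1}.

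Then each of the two error mechanisms must be controlled by a small multiple of the dissipation. For the cross term I would apply the Schwarz and Young inequalities to split off $\frac14\int_\Omega a(x)|\partial_t u|^2\Theta^\kappa\,dx$ at the cost of $C\int_\Omega|\nabla u|^2\,\frac{|\nabla\Theta|^2}{a(x)}\,\Theta^{\kappa-2}\,dx$. Here I would use $\nabla\langle x\rangle^{2-\alpha}=(2-\alpha)\langle x\rangle^{-\alpha}x$, so that $|\nabla\Theta|^2\le(2-\alpha)^2\langle x\rangle^{2-2\alpha}$, together with $a(x)\ge a_0\langle x\rangle^{-\alpha}$, to obtain $\frac{|\nabla\Theta|^2}{a(x)}\le\frac{(2-\alpha)^2}{a_0}\langle x\rangle^{2-\alpha}\le\frac{(2-\alpha)^2}{a_0}\Theta$. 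This turns the cost into $C\int_\Omega|\nabla u|^2\Theta^{\kappa-1}\,dx$, which is precisely one of the admissible remainder terms.

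For the $|\partial_t u|^2$ piece of the weight-derivative contribution the key is the analogue of \eqref{eq:1:Psi:-1}: by the weighted arithmetic-geometric mean inequality $\Theta\ge t_0^{\theta}\langle x\rangle^{(2-\alpha)(1-\theta)}$ for every $\theta\in[0,1]$, and choosing $\theta=\frac{2(1-\alpha)}{2-\alpha}$ (which lies in $(0,1]$ precisely because $\alpha<1$) gives $(2-\alpha)(1-\theta)=\alpha$, hence $\Theta^{-1}\le t_0^{-\frac{2(1-\alpha)}{2-\alpha}}\langle x\rangle^{-\alpha}\le a_0^{-1}t_0^{-\frac{2(1-\alpha)}{2-\alpha}}a(x)$. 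Therefore $\frac{\kappa}{2}\int_\Omega|\partial_t u|^2\Theta^{\kappa-1}\,dx\le\frac{\kappa}{2a_0}t_0^{-\frac{2(1-\alpha)}{2-\alpha}}\int_\Omega a(x)|\partial_t u|^2\Theta^\kappa\,dx$, and I would fix $t_1=t_1(n,\alpha,a_0,\lambda,\varepsilon)\ge1$ so large that the prefactor does not exceed $\frac14$ whenever $t_0\ge t_1$. Collecting the coefficients $-1+\frac14+\frac14=-\frac12$ on the dissipative term, and bundling the surviving $|\nabla u|^2$ and $|u|^{p+1}$ terms (each weighted by $\Theta^{\kappa-1}$) into the constant $C$, yields the stated inequality.

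I expect the only genuinely delicate point to be this last absorption: everything hinges on having a strictly positive power $\frac{2(1-\alpha)}{2-\alpha}$ of $t_0$ available to spend, which is exactly where the restriction $\alpha<1$ is used, and on the lower bound $a(x)\ge a_0\langle x\rangle^{-\alpha}$. In contrast to Lemma~\ref{lem:1:E1}, where the sharp ratio in \eqref{A3} for $A_\varepsilon$ was exploited, here the crude two-sided control of $a(x)$ suffices because the coefficient $\frac12$ on the dissipation is not required to be sharp. The remaining manipulations are the same routine bookkeeping as in Lemma~\ref{lem:1:E1}.
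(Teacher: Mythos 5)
Your proposal is correct and is essentially the paper's own argument: the paper proves this lemma by repeating the proof of Lemma \ref{lem:1:E1} with the two substitute estimates $\frac{|\nabla \Theta|^2}{a(x)\Theta} \le \frac{(2-\alpha)^2}{a_0}$ and $\Theta^{-1} \le \frac{1}{a_0} t_0^{-1+\frac{\alpha}{2-\alpha}} a(x)$, which are exactly the two bounds you derive (via $\nabla\langle x\rangle^{2-\alpha}=(2-\alpha)\langle x\rangle^{-\alpha}x$, the lower bound $a(x)\ge a_0\langle x\rangle^{-\alpha}$, and the interpolation $\Theta\ge t_0^{\theta}\langle x\rangle^{(2-\alpha)(1-\theta)}$ with $\theta=\frac{2(1-\alpha)}{2-\alpha}$). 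Your absorption of the cross term and of the $|\partial_t u|^2$ weight-derivative term into the dissipation, with $t_1$ chosen large, matches the paper's intended bookkeeping.
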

\begin{proof}
The proof is almost the same as
that of
Lemma \ref{lem:1:E1}.
The only differences are the use of
\begin{align}\label{eq:4:Theta:quot}
    \frac{|\nabla \Theta|^2}{a(x) \Theta}
    &=
    (2-\alpha)^2 \frac{\langle x\rangle^{-2\alpha}|x|^2}{a(x)(t_0+t+\langle x \rangle^{2-\alpha})}
    \le
    \frac{(2-\alpha)^2}{a_0}
\end{align}
and
\begin{align}
    \Theta(t,x;t_0)^{-1}
    &\le
    t_0^{-1+\frac{\alpha}{2-\alpha}}
    \langle x \rangle^{-\alpha}
    \le
    \frac{1}{a_0} t_0^{-1+\frac{\alpha}{2-\alpha}}
    a(x)
\end{align}
instead of
\eqref{eq:1:Psi:quot}
and \eqref{eq:1:Psi:-1},
respectively.
Thus, we omit the detail.
\end{proof}
\begin{lemma}\label{lem:2:E0}
Under the assumptions on Theorem \ref{thm1} (ii),
for
$t_0 \ge 1$
and $t >0$,
we have
\begin{align}
    \frac{d}{dt} E_0(t;t_0,\lambda) 
    &\le
    - \int_{\Omega} |\nabla u(t,x)|^2
    \Theta(t,x;t_0)^{\lambda} \,dx
    -  2 \int_{\Omega} |u(t,x)|^{p+1}
    \Theta(t,x;t_0)^{\lambda} \,dx \\
    &\quad +
    C \int_{\Omega} a(x) |\partial_t u(t,x)|^2
    \Theta(t,x;t_0)^{\lambda + \frac{\alpha}{2-\alpha}} \,dx
    +
    C\int_{\Omega} a(x) |u(t,x)|^2
    \Theta(t,x;t_0)^{\lambda-1} \,dx
\end{align}
with some constant
$C = C(n,\alpha,a_0,\lambda) > 0$.
\end{lemma}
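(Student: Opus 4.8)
The plan is to differentiate $E_0(t;t_0,\lambda)$ directly and reduce everything, via the equation \eqref{dwx}, to a combination of the four terms appearing on the right-hand side. Since $\partial_t\Theta=1$, differentiation gives
\begin{align}
    \frac{d}{dt}E_0(t;t_0,\lambda)
    &= \int_{\Omega}\bigl(2|\partial_t u|^2+2u\partial_t^2 u+2a(x)u\partial_t u\bigr)\Theta^{\lambda}\,dx \\
    &\quad + \lambda\int_{\Omega}\bigl(2u\partial_t u+a(x)|u|^2\bigr)\Theta^{\lambda-1}\,dx.
\end{align}
First I would insert $\partial_t^2 u=\Delta u-a(x)\partial_t u-|u|^{p-1}u$ from \eqref{dwx}; the two terms $\mp\,2a(x)u\partial_t u$ cancel, leaving $2|\partial_t u|^2+2u\Delta u-2|u|^{p+1}$ inside the first integral. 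Then I would integrate by parts in $\int_{\Omega}2u\Delta u\,\Theta^{\lambda}\,dx$. The boundary contribution vanishes because $u\in H^1_0(\Omega)$ and, in the reduction of Section \ref{sec:case1}, has compact support; this produces $-2\int_{\Omega}|\nabla u|^2\Theta^{\lambda}\,dx$ together with the cross term $-2\lambda\int_{\Omega}u(\nabla u\cdot\nabla\Theta)\Theta^{\lambda-1}\,dx$. At this stage $\frac{d}{dt}E_0$ is an exact sum of six terms, and the remaining work is purely to bound the four undesirable ones by the admissible right-hand side.

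The three estimates I would use are the following. For the kinetic term, since $a(x)\ge a_0\langle x\rangle^{-\alpha}$ and $\Theta\ge\langle x\rangle^{2-\alpha}$ give $\Theta^{\lambda}\le a_0^{-1}a(x)\Theta^{\lambda+\frac{\alpha}{2-\alpha}}$, one gets $2\int_{\Omega}|\partial_t u|^2\Theta^{\lambda}\,dx\le C\int_{\Omega}a(x)|\partial_t u|^2\Theta^{\lambda+\frac{\alpha}{2-\alpha}}\,dx$. For the gradient cross term, Young's inequality yields $2\lambda|u|\,|\nabla u|\,|\nabla\Theta|\,\Theta^{\lambda-1}\le|\nabla u|^2\Theta^{\lambda}+\lambda^2|u|^2|\nabla\Theta|^2\Theta^{\lambda-2}$, and then \eqref{eq:4:Theta:quot}, i.e. $|\nabla\Theta|^2\le\frac{(2-\alpha)^2}{a_0}a(x)\Theta$, converts the last factor into $Ca(x)|u|^2\Theta^{\lambda-1}$; crucially the $|\nabla u|^2\Theta^{\lambda}$ produced here consumes exactly one of the two units of $-2\int_{\Omega}|\nabla u|^2\Theta^{\lambda}\,dx$, leaving the required coefficient $-1$. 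Finally, for $2\lambda\int_{\Omega}u\partial_t u\,\Theta^{\lambda-1}\,dx$ I would split by weighted Young into $\lambda a(x)|u|^2\Theta^{\lambda-1}+\lambda a(x)^{-1}|\partial_t u|^2\Theta^{\lambda-1}$ and check that $a(x)^{-1}\Theta^{\lambda-1}\le Ca(x)\Theta^{\lambda+\frac{\alpha}{2-\alpha}}$, which is equivalent to $a(x)^{-2}\Theta^{-1-\frac{\alpha}{2-\alpha}}\le a_0^{-2}\langle x\rangle^{2\alpha-2}\le a_0^{-2}$ using $\alpha<1$. Collecting terms, and absorbing the leftover $\lambda\int_{\Omega}a(x)|u|^2\Theta^{\lambda-1}\,dx$ from the differentiation into the same family, yields precisely the claimed inequality.

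I do not expect a genuine obstacle here: the argument is essentially the computation underlying Lemma \ref{lem:1:E0} adapted to the simpler weight $\Theta$ of \eqref{eq:2:Psi}. The only point needing care is the bookkeeping of the weight exponents—making sure the gradient cross term borrows no more than one copy of $|\nabla u|^2\Theta^{\lambda}$, and that every error term lands in one of the two allowed families $a(x)|\partial_t u|^2\Theta^{\lambda+\frac{\alpha}{2-\alpha}}$ or $a(x)|u|^2\Theta^{\lambda-1}$. Note that, unlike in the first part, the absorbing nonlinearity is retained here with a favorable sign, namely $-2\int_{\Omega}|u|^{p+1}\Theta^{\lambda}\,dx$, which will be exploited subsequently to control the weighted $L^2$-norm of the solution.
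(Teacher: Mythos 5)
Your proposal is correct and follows essentially the same route as the paper: differentiate $E_0$, substitute the equation so the damping terms cancel, integrate by parts, and control the three error terms via Young's inequality together with the pointwise bounds $|\nabla\Theta|^2\le C a(x)\Theta$, $\Theta^{-1}\le a_0^{-1}a(x)$, and $1\le a_0^{-1}a(x)\Theta^{\frac{\alpha}{2-\alpha}}$. The only (immaterial) difference is bookkeeping: the paper splits the absorbed gradient budget as $\tfrac12+\tfrac12$ between the two cross terms, while you spend the full unit on the $u\,\nabla u\cdot\nabla\Theta$ term and send the $u\,\partial_t u$ term entirely into the $a(x)|\partial_t u|^2\Theta^{\lambda+\frac{\alpha}{2-\alpha}}$ family, which yields the same coefficient $-1$ in front of $\int_\Omega|\nabla u|^2\Theta^\lambda\,dx$.
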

\begin{proof}
The equation \eqref{dwx} and
the integration by parts imply
\begin{align}
    \frac{d}{dt} E_0(t;t_0,\lambda)
    &=
    2 \int_{\Omega} |\partial_t u|^2 \Theta^{\lambda} \,dx
    +
    2 \int_{\Omega} \left( \partial_t^2 u + a(x) \partial_t u \right) \Theta^{\lambda} \,dx \\
    &\quad
    + \lambda \int_{\Omega} \left( 2 u \partial_t u + a(x) |u|^2 \right) \Theta^{\lambda-1} \,dx \\
    &=
    2 \int_{\Omega} |\partial_t u|^2 \Theta^{\lambda} \,dx
    + 2 \int_{\Omega} \left( \Delta u - |u|^{p-1} u \right) u \Theta^{\lambda} \,dx\\
    &\quad
    + \lambda \int_{\Omega} \left( 2 u \partial_t u + a(x) |u|^2 \right) \Theta^{\lambda-1} \,dx \\
    &= - 2 \int_{\Omega} |\nabla u|^2 \Theta^{\lambda} \,dx
    -  2 \int_{\Omega} |u|^{p+1} \Theta^{\lambda} \,dx \\
    &\quad + 2 \int_{\Omega} |\partial_t u|^2 \Theta^{\lambda} \,dx
    -2 \lambda \int_{\Omega} (\nabla u \cdot \nabla \Theta) u
    \Theta^{\lambda -1} \,dx \\
    &\quad + \lambda \int_{\Omega}
    \left(
        2u \partial_t u + a(x) |u|^2
    \right)
    \Theta^{\lambda-1} \,dx.
\label{eq:2:lemE0}
\end{align}
Let us estimates the right-hand side.
Applying the Schwarz inequality and
\eqref{eq:4:Theta:quot},
we obtain
\begin{align}
     -2 \lambda \int_{\Omega} (\nabla u \cdot \nabla \Psi) u
    \Theta^{\lambda -1} \,dx
    &\le
    \frac{1}{2} \int_{\Omega} |\nabla u|^2 \Theta^{\lambda} \,dx
    + C \int_{\Omega} |u|^2 |\nabla \Theta|^2 \Theta^{\lambda-2} \,dx \\
    &\le
    \frac{1}{2} \int_{\Omega} |\nabla u|^2 \Theta^{\lambda} \,dx
    + C \int_{\Omega} a(x) |u|^2 \Theta^{\lambda-1} \,dx.
\end{align}
Moreover, the Schwarz inequality and
$\Theta^{-1} \le \dfrac{1}{a_0} a(x)$
imply
\begin{align}
    \lambda \int_{\Omega}
        2u(t,x) \partial_t u(t,x) 
        \Theta^{\lambda-1} \,dx
    &\le
    \frac{1}{2} \int_{\Omega} |\nabla u|^2 \Theta^{\lambda} \,dx
    + C \int_{\Omega} |u|^2 \Theta^{\lambda-2}\,dx \\
    &\le
    \frac{1}{2} \int_{\Omega} |\nabla u|^2 \Theta^{\lambda} \,dx
    + C \int_{\Omega} a(x) |u|^2 \Theta^{\lambda-1} \,dx.
\end{align}
From
$1 \le \dfrac{1}{a_0} a(x) \Theta^{\frac{\alpha}{2-\alpha}}$,
we also obtain
\begin{align}
    2\int_{\Omega} |\partial_t u|^2 \Theta^{\lambda}\,dx
    \le
    C \int_{\Omega} a(x) |\partial_t u|^2 \Theta^{\lambda+\frac{\alpha}{2-\alpha}} \,dx.
\end{align}
Putting them all together into \eqref{eq:2:lemE0},
we conclude
\begin{align}
    \frac{d}{dt} E_0(t;t_0,\lambda)
    &\le
    - \int_{\Omega} |\nabla u|^2 \Theta^{\lambda} \,dx
    -  2 \int_{\Omega} |u|^{p+1} \Theta^{\lambda} \,dx \\
    &\quad + C \int_{\Omega} a(x) |\partial_t u|^2 \Theta^{\lambda + \frac{\alpha}{2-\alpha}} \,dx
    + C\int_{\Omega} a(x) |u|^2 \Theta^{\lambda-1} \,dx.
\end{align}
This completes the proof.
\end{proof}

Combining Lemmas \ref{lem:2:E1} and \ref{lem:2:E0},
we have the following.
\begin{lemma}\label{lem:E0+E1}
Under the assumptions on Theorem \ref{thm1} (ii),
there exist constants
$\nu_{\ast} = \nu_{\ast}(n,\alpha,a_0,\lambda) \in (0,\nu_0)$
and
$t_2 = t_2(n,\alpha,a_0,p,\lambda,\nu_{\ast}) \ge 1$
such that for
$t_0 \ge t_2$,
and $t >0$,
we have
\begin{align}
    &E_{\ast} (t;t_0,\lambda,\nu_{\ast})
    + 
    \int_0^t
    \int_{\Omega} a(x) |\partial_t u(s,x)|^2 \Theta(s,x;t_0)^{\lambda + \frac{\alpha}{2-\alpha}} \,dx \,ds \\
    &\quad
    + \int_0^t \int_{\Omega}
    \left(
    |\nabla u(s,x)|^2
    + |u(s,x)|^{p+1}
    \right)
    \Theta(s,x;t_0)^{\lambda} \,dxds\\
    &\le
    C E_{\ast}(0;t_0,\lambda,\nu)
    + C \int_0^t \int_{\Omega}
        a(x) |u(s,x)|^2
    \Theta(s,x;t_0)^{\lambda-1} \,dxds
\end{align}
with some constant
$C = C(n,\alpha,a_0,p,\lambda,\nu_{\ast})>0$.
\end{lemma}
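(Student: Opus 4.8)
The plan is to form the combination $E_{\ast}=E_1+\nu E_0$ and simply add the two differential inequalities provided by Lemmas \ref{lem:2:E1} and \ref{lem:2:E0} (the latter multiplied by $\nu$), then fix the two free parameters $\nu$ and $t_0$ so as to absorb every term except the genuinely uncontrolled one $\int_{\Omega}a|u|^2\Theta^{\lambda-1}\,dx$, which is kept on the right-hand side. Concretely, for $t_0\ge t_1$ one obtains
\begin{align}
    \frac{d}{dt}E_{\ast}
    &\le
    -\frac{1}{2}\int_{\Omega} a|\partial_t u|^2\Theta^{\lambda+\frac{\alpha}{2-\alpha}}\,dx
    + C\int_{\Omega}\left(|\nabla u|^2+|u|^{p+1}\right)\Theta^{\lambda+\frac{\alpha}{2-\alpha}-1}\,dx \\
    &\quad
    -\nu\int_{\Omega}\left(|\nabla u|^2+2|u|^{p+1}\right)\Theta^{\lambda}\,dx
    + C\nu\int_{\Omega} a|\partial_t u|^2\Theta^{\lambda+\frac{\alpha}{2-\alpha}}\,dx
    + C\nu\int_{\Omega} a|u|^2\Theta^{\lambda-1}\,dx.
\end{align}

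There are exactly two absorptions to carry out. First, for the dissipation, choosing $\nu=\nu_{\ast}\in(0,\nu_0)$ small enough that $C\nu_{\ast}\le \tfrac{1}{4}$ lets the term $+C\nu_{\ast}\int a|\partial_t u|^2\Theta^{\lambda+\frac{\alpha}{2-\alpha}}$ be swallowed by $-\tfrac12\int a|\partial_t u|^2\Theta^{\lambda+\frac{\alpha}{2-\alpha}}$, leaving a coefficient $-\tfrac14$. The second and key absorption exploits that the exponent $\frac{\alpha}{2-\alpha}-1=-\frac{2(1-\alpha)}{2-\alpha}$ is strictly negative for $\alpha\in[0,1)$; since $\Theta\ge t_0+t\ge t_0$, I would estimate $\Theta^{\frac{\alpha}{2-\alpha}-1}\le t_0^{\frac{\alpha}{2-\alpha}-1}$, so that
\begin{align}
    C\int_{\Omega}\left(|\nabla u|^2+|u|^{p+1}\right)\Theta^{\lambda+\frac{\alpha}{2-\alpha}-1}\,dx
    \le
    Ct_0^{\frac{\alpha}{2-\alpha}-1}\int_{\Omega}\left(|\nabla u|^2+|u|^{p+1}\right)\Theta^{\lambda}\,dx.
\end{align}
After $\nu_{\ast}$ is fixed, I would then take $t_0\ge t_2$ large (depending on $\nu_{\ast}$) so that $Ct_0^{\frac{\alpha}{2-\alpha}-1}\le \tfrac{\nu_{\ast}}{2}$, whence this contribution is dominated by the negative term $-\nu_{\ast}\int(|\nabla u|^2+2|u|^{p+1})\Theta^{\lambda}$. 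This produces, for $t_0\ge t_2$,
\begin{align}
    \frac{d}{dt}E_{\ast}
    &\le
    -\frac{1}{4}\int_{\Omega} a|\partial_t u|^2\Theta^{\lambda+\frac{\alpha}{2-\alpha}}\,dx
    -\frac{\nu_{\ast}}{2}\int_{\Omega}\left(|\nabla u|^2+|u|^{p+1}\right)\Theta^{\lambda}\,dx
    + C\nu_{\ast}\int_{\Omega} a|u|^2\Theta^{\lambda-1}\,dx.
\end{align}

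Integrating over $[0,t]$ and moving the two nonpositive space-time integrals to the left-hand side yields precisely the asserted inequality, with the surviving term $C\int_0^t\int_{\Omega}a|u|^2\Theta^{\lambda-1}\,dxds$ on the right. I do not expect any genuine analytic difficulty in this lemma; the only point requiring care is the order of the quantifiers, namely that $\nu_{\ast}$ must be chosen first (subject to $\nu_{\ast}<\nu_0$ for the lower bound \eqref{eq:4:en:pos} and to $C\nu_{\ast}\le\tfrac14$ for the dissipation) and only afterward $t_0$, so that $t_2$ may legitimately depend on $\nu_{\ast}$. The real obstacle of the whole argument, namely controlling the leftover $\int_0^t\int_{\Omega}a|u|^2\Theta^{\lambda-1}\,dxds$, is deliberately deferred: in part (ii) it will be handled subsequently using the absorbing nonlinearity, in contrast with part (i), where the supersolution property of $\Phi_{\beta,\varepsilon}$ (Proposition \ref{prop:super-sol}(iv)) generated the corresponding favorable $a|u|^2$ term automatically.
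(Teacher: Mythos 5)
Your proof is correct and takes essentially the same route as the paper's: add Lemma \ref{lem:2:E1} to $\nu$ times Lemma \ref{lem:2:E0}, fix $\nu_{\ast}\in(0,\nu_0)$ small to absorb the $C\nu\int_\Omega a|\partial_t u|^2\Theta^{\lambda+\frac{\alpha}{2-\alpha}}\,dx$ term into the dissipation, then take $t_2$ large (using $\Theta\ge t_0$ and the negativity of $\frac{\alpha}{2-\alpha}-1$) to absorb the $\Theta^{\lambda+\frac{\alpha}{2-\alpha}-1}$ terms into $-\nu_{\ast}\int_\Omega(|\nabla u|^2+2|u|^{p+1})\Theta^{\lambda}\,dx$, and integrate over $[0,t]$. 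The quantifier ordering you emphasize ($\nu_{\ast}$ first, then $t_2$ depending on $\nu_{\ast}$) is exactly how the paper proceeds.
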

\begin{proof}
Let $\nu \in (0,\nu_0)$,
where
$\nu_0$
is taken so that \eqref{eq:4:en:pos}
holds.
Let
$t_1$ be the constant determined by Lemma \ref{lem:2:E1}.
Then, by Lemmas \ref{lem:2:E1} and \ref{lem:2:E0},
we obtain for
$t_0 \ge t_1$
and
$t > 0$,
\begin{align}
    \frac{d}{dt} E_{\ast}(t;t_0,\lambda,\nu)
    &=
    \frac{d}{dt} E_1(t;t_0,\lambda)
    + \nu \frac{d}{dt} E_0(t;t_0,\lambda) \\
    &\le
    - \frac{1}{2} \int_{\Omega} a(x) |\partial_t u|^2 \Theta^{\lambda + \frac{\alpha}{2-\alpha}} \,dx \\
    &\quad + C \int_{\Omega} |\nabla u|^2
    \Theta^{\lambda + \frac{\alpha}{2-\alpha}-1} \,dx
    + C \int_{\Omega} |u|^{p+1}
    \Theta^{\lambda + \frac{\alpha}{2-\alpha}-1} \,dx \\
    &\quad
    - \nu \int_{\Omega} |\nabla u|^2 \Theta^{\lambda} \,dx
    -  2 \nu \int_{\Omega} |u|^{p+1} \Theta^{\lambda} \,dx \\
    &\quad + C \nu \int_{\Omega} a(x) |\partial_t u|^2 \Theta^{\lambda + \frac{\alpha}{2-\alpha}} \,dx
    + C \nu \int_{\Omega} a(x) |u|^2 \Theta^{\lambda-1} \,dx.
\end{align}
We take
$\nu =\nu_{\ast}$
with sufficiently small
$\nu_{\ast} \in (0,\nu_0)$
such that
the constants in front of the last two terms satisfy
$C\nu_{\ast} < \frac{1}{2}$.
Moreover,
taking
$t_2 > 0$
sufficiently large depending on
$\nu_{\ast}$
so that
$C \Theta^{\frac{\alpha}{2-\alpha}-1} < \nu_{\ast}$
for $t_0 \ge t_2$,
we conclude
\begin{align}
    \frac{d}{dt} E_{\ast}(t;t_0,\lambda,\nu)
    &\le
    - \eta  \int_{\Omega} a(x) |\partial_t u|^2 \Theta^{\lambda + \frac{\alpha}{2-\alpha}} \,dx
    -\eta
    \int_{\Omega} |\nabla u|^2
    \Theta^{\lambda} \,dx \\
    &\quad
    -\eta
    \int_{\Omega} |u|^{p+1}
    \Theta^{\lambda} \,dx
    + C \nu \int_{\Omega} a(x) |u|^2 \Theta^{\lambda-1} \,dx
\end{align}
with some constant
$\eta = \eta(n,\alpha,a_0,p,\lambda,\nu_{\ast}) > 0$.
Finally, integrating the above inequality over
$[0,t]$
gives the desired estimate.
\end{proof}

Besed on Lemma \ref{lem:E0+E1},
we show the following estimate
for
$\tilde{E}(t;t_0,\lambda)$.
\begin{lemma}\label{lem:Etilde}
Under the assumptions on Theorem \ref{thm1} (ii),
there exists a constant
$t_2 = t_2(n,\alpha,a_0,p,\lambda) \ge 1$
such that for
$t_0 \ge t_2$
and $t >0$,
we have
\begin{align}
    &\tilde{E}(t;t_0,\lambda)
    + \int_{\Omega} a(x) |u(t,x)|^2 \Theta(t,x;t_0)^{\lambda} \,dx \\
    &\quad + 
    \int_0^t
    \int_{\Omega} a(x) |\partial_t u(s,x)|^2
    \left[
    (t_0 + s) + \Theta(s,x;t_0)^{\frac{\alpha}{2-\alpha}}
    \right]
    \Theta(s,x;t_0)^{\lambda} \,dx \,ds \\
    &\quad
    + \int_0^t \int_{\Omega}
    \left(
    |\nabla u(s,x)|^2
    + |u(s,x)|^{p+1}
    \right)
    \Theta(s,x;t_0)^{\lambda} \,dxds\\
    &\le
    CI_0[u_0,u_1]
    + C \int_0^t \int_{\Omega}
    a(x)^{\frac{p+1}{p-1}}
    \Theta(s,x;t_0)^{\lambda - \frac{p+1}{p-1}} \,dxds
\end{align}
with some constant
$C = C(n,\alpha,a_0,a_1,p,\lambda,t_0) > 0$.
\end{lemma}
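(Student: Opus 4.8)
The plan is to mirror the proof of Lemma~\ref{lem:1:tildeE} from the first part: differentiate $\tilde{E}(t;t_0,\lambda)$, combine the resulting differential inequality with the conclusion of Lemma~\ref{lem:E0+E1}, and then bound the initial data by $I_0[u_0,u_1]$. The genuinely new ingredient, absent in part~(i), is the treatment of the remainder term $\int_0^t\int_\Omega a(x)|u|^2\Theta^{\lambda-1}\,dxds$ appearing on the right-hand side of Lemma~\ref{lem:E0+E1}; here the absorbing nonlinearity is what makes the estimate close.

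First I would differentiate $\tilde{E}(t;t_0,\lambda)$, using integration by parts and the equation \eqref{dwx}. As in Lemma~\ref{lem:1:tildeE}, this yields the dissipation $-(t_0+t)\int_\Omega a(x)|\partial_t u|^2\Theta^\lambda\,dx$, an energy-density term weighted by $(\Theta+\lambda(t_0+t))\Theta^{\lambda-1}$, and a cross term $-\lambda(t_0+t)\int_\Omega\partial_t u\,(\nabla u\cdot\nabla\Theta)\Theta^{\lambda-1}\,dx$. The cross term is controlled by the Schwarz inequality together with the bound \eqref{eq:4:Theta:quot} on $|\nabla\Theta|^2/(a(x)\Theta)$; taking the splitting constant small absorbs the $|\nabla u|^2$ piece into the energy-density term. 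Using $t_0+t\le\Theta$ and $a(x)^{-1}\le C\Theta^{\alpha/(2-\alpha)}$ (which follows from $a(x)\ge a_0\langle x\rangle^{-\alpha}$ and $\langle x\rangle^{2-\alpha}\le\Theta$), the $|\partial_t u|^2$ contribution is dominated by $C\int_\Omega a(x)|\partial_t u|^2\Theta^{\lambda+\alpha/(2-\alpha)}\,dx$. Integrating over $[0,t]$ then gives an estimate for $\tilde{E}$ whose dissipation carries the weight $[(t_0+s)+\Theta^{\alpha/(2-\alpha)}]\Theta^\lambda$.

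Next I would multiply this integrated inequality by a small $\mu>0$ and add it to the conclusion of Lemma~\ref{lem:E0+E1}. For $\mu$ small the coefficients in front of $\int_0^t\int_\Omega a(x)|\partial_t u|^2[(t_0+s)+\Theta^{\alpha/(2-\alpha)}]\Theta^\lambda$ and $\int_0^t\int_\Omega(|\nabla u|^2+|u|^{p+1})\Theta^\lambda$ stay positive, so these terms remain on the left, while the initial contributions $\mu\tilde{E}(0)+CE_{\ast}(0)$ are bounded by $CI_0[u_0,u_1]$ just as before. The lower bound \eqref{eq:4:en:pos} then lets me replace $E_{\ast}(t)$ on the left by $\tfrac12 E_1(t)+\tfrac{\nu_{\ast}}{2}\int_\Omega a(x)|u|^2\Theta^\lambda\,dx$, producing the left-hand side of the claimed estimate up to the surviving remainder term.

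The crux is handling the remainder $\int_0^t\int_\Omega a(x)|u|^2\Theta^{\lambda-1}\,dxds$ inherited from Lemma~\ref{lem:E0+E1}. Writing the integrand as $\bigl(|u|^2\Theta^{2\lambda/(p+1)}\bigr)\cdot\bigl(a(x)\Theta^{\lambda-1-2\lambda/(p+1)}\bigr)$ and applying Young's inequality with the conjugate exponents $\tfrac{p+1}{2}$ and $\tfrac{p+1}{p-1}$ gives
\begin{align}
a(x)|u|^2\Theta^{\lambda-1}
\le \mu'\,|u|^{p+1}\Theta^\lambda
+ C_{\mu'}\,a(x)^{\frac{p+1}{p-1}}\Theta^{\lambda-\frac{p+1}{p-1}},
\end{align}
where a direct computation of the exponent $\bigl(\lambda-1-\tfrac{2\lambda}{p+1}\bigr)\cdot\tfrac{p+1}{p-1}=\lambda-\tfrac{p+1}{p-1}$ confirms the power of $\Theta$ in the second term. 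Taking $\mu'$ small absorbs the first term into the $\int_0^t\int_\Omega|u|^{p+1}\Theta^\lambda$ contribution already on the left, leaving exactly $C\int_0^t\int_\Omega a(x)^{(p+1)/(p-1)}\Theta^{\lambda-(p+1)/(p-1)}\,dxds$ on the right. I expect this Young-inequality splitting, balancing the exponents so that the absorbing nonlinearity can swallow one piece, to be the main obstacle; once it is set up, the rest parallels the compactly supported case of part~(i).
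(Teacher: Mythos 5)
Your proposal is correct and follows essentially the same route as the paper: differentiate $\tilde{E}$ as in Lemma \ref{lem:1:tildeE}, add a small multiple of the integrated inequality to the conclusion of Lemma \ref{lem:E0+E1}, bound the initial terms by $I_0[u_0,u_1]$, and then dispose of the remainder $\int_0^t\int_\Omega a(x)|u|^2\Theta^{\lambda-1}\,dxds$ by Young's inequality with exponents $\tfrac{p+1}{2}$ and $\tfrac{p+1}{p-1}$, absorbing the $|u|^{p+1}\Theta^{\lambda}$ piece into the left-hand side. This is exactly the paper's argument, including the exponent bookkeeping $\bigl(\lambda(1-\tfrac{2}{p+1})-1\bigr)\tfrac{p+1}{p-1}=\lambda-\tfrac{p+1}{p-1}$.
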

\begin{proof}
Take the same constants
$\nu_{\ast}$ and $t_2$
as in Lemma \ref{lem:E0+E1}.
By the same computation as in
Lemma \ref{lem:1:tildeE},
we can obtain
\begin{align}
    &\tilde{E}(t;t_0,\lambda)
    + \frac{1}{2} \int_0^t (t_0+s)
    \int_{\Omega} 
    a(x) |\partial_t u|^2
    \Theta^{\lambda} \,dx ds \\
    &\le
    \tilde{E}(0;t_0,\lambda)
    +
    C \int_0^t \int_{\Omega} a(x) |\partial_t u|^2 
    \Theta^{\lambda + \frac{\alpha}{2-\alpha}}
    \,dxds
    + 
    C \int_0^t \int_{\Omega}
    \left(
        |\nabla u|^2 + |u|^{p+1}
    \right)
    \Theta^{\lambda}
    \,dxds.
\end{align}
We multiply the both sides
by a sufficiently small constant
$\mu >0$,
and add it and the
conclusion of Lemma \ref{lem:E0+E1}.
Then, we obtain
\begin{align}
    &\mu \tilde{E}(t;t_0,\lambda)
    + E_{\ast}(t;t_0,\lambda,\nu_{\ast}) \\
    &\quad +
    \int_0^t
    \int_{\Omega} 
    a(x) |\partial_t u|^2
    \left[
    \frac{\mu}{2}(t_0+s) +
    (1-C\mu) \Theta^{\frac{\alpha}{2-\alpha}}
    \right]
    \Theta^{\lambda} \,dx ds \\
    &\quad +
    (1-C\mu)
    \int_0^t \int_{\Omega}
    \left(
        |\nabla u|^2 + |u|^{p+1}
    \right)
    \Theta^{\lambda}
    \,dxds \\
    &\le
    \mu \tilde{E}(0;t_0,\lambda)
    + C E_{\ast} (0;t_0,\lambda,\nu_{\ast})
\end{align}
for $t_0 \ge t_2$ and $t > 0$.
By taking
$\mu$
sufficiently small so that
$1-C\mu > 0$
holds,
the terms including
$|\partial_t u|^2$
and $|\nabla u|^2$
in
the left-hand side can be dropped.
Since
both
$\tilde{E}(0;t_0,\lambda)$
and
$E_{\ast}(0;t_0,\lambda,\nu_{\ast})$
are bounded by
$C I_0[u_0,u_1]$
with some constant
$C = C(a_1,p,\lambda,t_0)>0$,
one obtains
\begin{align}
    &\tilde{E}(t;t_0,\lambda)
    + \int_{\Omega} a(x) |u(t,x)|^2 \Theta(t,x;t_0)^{\lambda} \,dx
    + \int_0^t \int_{\Omega}
    |u|^{p+1}
    \Theta^{\lambda}
    \,dxds
    \\
    &\le
    CI_0[u_0,u_1]
    + C \int_0^t \int_{\Omega}
        a(x) |u|^2
    \Theta^{\lambda-1} \,dxds
\label{eq:2:est:Etilde}
\end{align}
with some
$C = C(n,\alpha,a_0,a_1,p,\lambda,t_0) > 0$.
Finally, applying the Young inequality to the last term of the right-hand side,
we deduce
\begin{align}
    C\int_0^t \int_{\Omega}
        a(x) |u|^2
    \Theta^{\lambda-1} \,dxds
    &=
    C \int_0^t \int_{\Omega}
        |u|^2 \Theta^{\frac{2}{p+1}\lambda}
        \cdot
        a(x) \Theta^{\lambda(1-\frac{2}{p+1})-1}
        \,dxds \\
    &\le
    \frac{1}{2} \int_0^t \int_{\Omega} |u|^{p+1} \Theta^{\lambda} \,dxds
    +
    C \int_0^t \int_{\Omega}
    a(x)^{\frac{p+1}{p-1}}
    \Theta^{\lambda - \frac{p+1}{p-1}} \,dxds.
\end{align}
This and \eqref{eq:2:est:Etilde} give
the conclusion.
\end{proof}
By virtue of Lemma \ref{lem:Etilde},
it suffices to estimate the term
\begin{align}
    C \int_0^t \int_{\Omega}
    a(x)^{\frac{p+1}{p-1}}
    \Theta(s,x;t_0)^{\lambda - \frac{p+1}{p-1}} \,dxds.
\end{align}
For this, we have the following lemma.
\begin{lemma}\label{lem:2:badterm}
Under the assumptions on Theorem \ref{thm1} (ii) and \eqref{eq:2:rstr:lambda},
we have for any
$t_0 > 0$
and
$t \ge 0$,
\begin{align}
    &\int_0^t \int_{\Omega}
    a(x)^{\frac{p+1}{p-1}}
    \Theta(s,x;t_0)^{\lambda - \frac{p+1}{p-1}} \,dxds \\
    &\le
    C \begin{cases}
    1
        &(\lambda < \min\{ \frac{4}{2-\alpha}(\frac{1}{p-1} - \frac{n-\alpha}{4} ), \frac{2}{p-1} \}),\\
    \log (t_0+t)
        &(\lambda = \min\{ \frac{4}{2-\alpha}(\frac{1}{p-1} - \frac{n-\alpha}{4} ), \frac{2}{p-1} \}, \ p \neq p_{subc}(n,\alpha)),\\
    (\log (t_0+t))^2
        &(\lambda = \frac{4}{2-\alpha}(\frac{1}{p-1} - \frac{n-\alpha}{4} ) = \frac{2}{p-1}, \ \mathrm{i.e.,} \ p = p_{subc}(n,\alpha)),\\
    (1+t)^{\lambda -\frac{4}{2-\alpha} ( \frac{1}{p-1} - \frac{n-\alpha}{4})}
        &(\lambda > \frac{4}{2-\alpha}(\frac{1}{p-1} - \frac{n-\alpha}{4} ),
        \ p > p_{subc}(n,\alpha)),\\
    (1+t)^{\lambda -\frac{2}{p-1}} \log (t_0+t)
        &(\lambda > \frac{2}{p-1},
        \ p = p_{subc}(n,\alpha)),\\
    (1+t)^{\lambda -\frac{2}{p-1}}
        &(\lambda > \frac{2}{p-1}, 
        \ p<p_{subc}(n,\alpha))
    \end{cases}
\end{align}
with some constant
$C = C(n,\alpha,a_1,p,\lambda) >0$.
\end{lemma}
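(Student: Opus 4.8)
The plan is to discard the upper bound $a(x)\le a_1\langle x\rangle^{-\alpha}$ from Theorem \ref{thm1}(ii) and reduce the statement to a single scalar double integral that can be evaluated by elementary calculus. Under \eqref{eq:2:rstr:lambda} the exponent $\lambda-\frac{p+1}{p-1}$ is negative (since $\frac{p+1}{p-1}=1+\frac{2}{p-1}$, it is $<\varepsilon-1<0$), while the weight \eqref{eq:2:Psi} satisfies $\Theta\ge\langle x\rangle^{2-\alpha}\ge 1$; hence I would bound
\begin{align}
a(x)^{\frac{p+1}{p-1}}\Theta^{\lambda-\frac{p+1}{p-1}}
\le a_1^{\frac{p+1}{p-1}}\langle x\rangle^{-q}\,\Theta^{-m},
\qquad q:=\frac{\alpha(p+1)}{p-1},\quad m:=\frac{p+1}{p-1}-\lambda>0,
\end{align}
and extend the $x$-integral to all of $\mathbb{R}^n$. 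Writing $\tau=t_0+s$, it then suffices to control $\int_{t_0}^{t_0+t}I(\tau)\,d\tau$, where $I(\tau):=\int_{\mathbb{R}^n}\langle x\rangle^{-q}(\tau+\langle x\rangle^{2-\alpha})^{-m}\,dx$. So the strategy is to first evaluate the spatial integral $I(\tau)$ and then integrate in time.

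For $I(\tau)$ I pass to radial coordinates and, when $\tau\ge1$, split at the scale $\langle x\rangle^{2-\alpha}=\tau$. On the inner region $\langle x\rangle^{2-\alpha}\le\tau$ one has $\Theta\sim\tau$, so $I$ reduces to $\tau^{-m}\int_{\langle x\rangle^{2-\alpha}\le\tau}\langle x\rangle^{-q}\,dx$, and the remaining integral is comparable to a constant, to $\log\tau$, or to $\tau^{(n-q)/(2-\alpha)}$ according as $q>n$, $q=n$, or $q<n$. The decisive algebraic fact is that the sign of $q-n$ is opposite to that of $p-p_{subc}(n,\alpha)$, which is exactly how \eqref{eq:def:psubc} enters. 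On the outer region $\langle x\rangle^{2-\alpha}\ge\tau$ one has $\Theta\sim\langle x\rangle^{2-\alpha}$, and $\int_{\langle x\rangle^{2-\alpha}\ge\tau}\langle x\rangle^{-q-(2-\alpha)m}\,dx$ converges — this needs $\lambda<\theta+1$, guaranteed by \eqref{eq:2:rstr:lambda} — and is comparable to $\tau^{(n-q)/(2-\alpha)-m}$. Setting $\theta:=\frac{4}{2-\alpha}\big(\frac{1}{p-1}-\frac{n-\alpha}{4}\big)$, short computations give $\frac{n-q}{2-\alpha}-m=\lambda-\theta-1$ and $-m=\lambda-\frac{2}{p-1}-1$, so that for $\tau\ge1$
\begin{align}
I(\tau)\le C\begin{cases}
\tau^{\lambda-\frac{2}{p-1}-1} & (p<p_{subc}(n,\alpha)),\\
\tau^{\lambda-\theta-1}\log\tau & (p=p_{subc}(n,\alpha)),\\
\tau^{\lambda-\theta-1} & (p>p_{subc}(n,\alpha)),
\end{cases}
\end{align}
where for $p<p_{subc}(n,\alpha)$ the inner region dominates (there $\frac{2}{p-1}<\theta$), at $p=p_{subc}(n,\alpha)$ it dominates by one logarithm, and for $p>p_{subc}(n,\alpha)$ both regions contribute the same power. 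For $\tau\le1$ the inner region is empty and $I(\tau)$ is bounded by a constant.

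Finally I integrate over $\tau\in[t_0,t_0+t]$, splitting at $\tau=1$ so that the range $\tau\le1$ contributes only a constant. Writing $\sigma$ for the active time exponent, $\sigma=\lambda-\frac{2}{p-1}-1$ if $p<p_{subc}(n,\alpha)$ and $\sigma=\lambda-\theta-1$ if $p\ge p_{subc}(n,\alpha)$, in either case $\sigma<-1$, $\sigma=-1$, or $\sigma>-1$ precisely as $\lambda<$, $=$, or $>\min\{\frac{2}{p-1},\theta\}$. When $\sigma<-1$ the integral is bounded; when $\sigma=-1$ it gives $\log(t_0+t)$; when $\sigma>-1$ it gives a power. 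In the last case \eqref{eq:2:rstr:lambda} forces $c:=\sigma+1$ (equal to $\lambda-\frac{2}{p-1}$ or $\lambda-\theta$) to lie in $(0,1)$, and the elementary estimates $(t_0+t)^c-t_0^c\le t^c$ for $t_0\ge1$ and $(t_0+t)^c\le(1+t)^c$ for $t_0\le1$ (after discarding the bounded range $\tau\le1$) deliver $(1+t)^{\lambda-\frac{2}{p-1}}$ or $(1+t)^{\lambda-\theta}$ with a constant independent of $t_0$. At $p=p_{subc}(n,\alpha)$ the extra factor $\log\tau$ in $I(\tau)$ upgrades the $\sigma=-1$ outcome to $(\log(t_0+t))^2$ and the $\sigma>-1$ outcome to $(1+t)^{\lambda-\frac{2}{p-1}}\log(t_0+t)$. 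Matching these against the trichotomy in $\lambda$, together with the fact that $\theta-\frac{2}{p-1}$ is positive, zero, or negative exactly when $p<$, $=$, or $>p_{subc}(n,\alpha)$ (so that $\min\{\frac{2}{p-1},\theta\}$ switches its active endpoint across $p_{subc}(n,\alpha)$), reproduces the six cases of the lemma.

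The radial integrations and the one-dimensional $\tau$-integrals are routine; the genuine care lies in the bookkeeping. The main obstacle I anticipate is ensuring that the single logarithm produced by spatial criticality at $p=p_{subc}(n,\alpha)$ combines with the single logarithm produced by time criticality at $\lambda=\frac{2}{p-1}$ to give the doubly-logarithmic factor at exactly the one doubly-critical point, without contaminating the simple-log cases on either critical curve; and, equally important, verifying that the concavity/subadditivity estimate keeps the constant independent of $t_0$ in every power-growth regime, which is precisely what the smallness built into \eqref{eq:2:rstr:lambda} is there to guarantee.
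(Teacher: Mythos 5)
Your proposal is correct and takes essentially the same approach as the paper: the paper likewise replaces $a(x)$ by its upper bound $a_1\langle x\rangle^{-\alpha}$, splits the spatial integral at $\langle x\rangle^{2-\alpha}=t_0+s$ into an inner region (where $\Theta\sim t_0+s$) and an outer region (where $\Theta\sim\langle x\rangle^{2-\alpha}$, with convergence guaranteed by \eqref{eq:2:rstr:lambda}), obtains the same constant/$\log$/power trichotomy according to $p<$, $=$, $>p_{subc}(n,\alpha)$ (equivalently your $q>$, $=$, $<n$), and then performs the identical case-by-case time integration. The differences are cosmetic — the substitution $\tau=t_0+s$, extending the integral to $\mathbb{R}^n$, and your somewhat more explicit handling of the range $\tau\le 1$ and of the $t_0$-independence of constants via subadditivity of $\tau\mapsto\tau^{c}$ with $c\in(0,1)$.
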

\begin{proof}
Let 
$s \in (0,t)$.
First, we divide
$\Omega$
into
$\Omega = \Omega_1(s) \cup \Omega_2(s)$,
where
\begin{align}
    \Omega_1(s)
    &=
    \left\{ x \in \Omega ;\, 
    \langle x \rangle^{2-\alpha} \le t_0 + s
    \right\},\\
    \Omega_2(s)
    &=
    \Omega \setminus \Omega_1(s)
    =
    \left\{ x \in \Omega ;\, 
    \langle x \rangle^{2-\alpha} > t_0 + s
    \right\}.
\end{align}
The corresponding integral is also decomposed into
\begin{align}
    \int_{\Omega}
    a(x)^{\frac{p+1}{p-1}}
    \Theta(s,x;t_0)^{\lambda - \frac{p+1}{p-1}} \,dx
    &=
    \int_{\Omega_1(s)}
    a(x)^{\frac{p+1}{p-1}}
    \Theta(s,x;t_0)^{\lambda - \frac{p+1}{p-1}} \,dx \\
    &\quad +
    \int_{\Omega_2(s)}
    a(x)^{\frac{p+1}{p-1}}
    \Theta(s,x;t_0)^{\lambda - \frac{p+1}{p-1}} \,dx\\
    &=: I(s) + I\!I(s).
\end{align}
Note that,
in
$\Omega_1(s)$,
the function
$\Theta(s,x;t_0) = t_0+s+\langle x \rangle^{2-\alpha}$
is bounded from both above and below
by $t_0+s$.
Therefore, we estimate
\begin{align}
    I(s)
    &\le
    C (t_0 + s)^{\lambda - \frac{p+1}{p-1}}
    \int_{\Omega_1(s)}  a(x)^{\frac{p+1}{p-1}} \,dx \\
    &\le
    C (t_0 + s)^{\lambda - \frac{p+1}{p-1}}
    \int_{\Omega_1(s)} \langle x \rangle^{-\alpha \frac{p+1}{p-1}} \,dx \\
    &\le
    C (t_0 + s)^{\lambda - \frac{p+1}{p-1}}
    h(s),
\label{eq:2:I(s)}
\end{align}
where
\begin{align}
\label{eq:2:def:h}
    h(s)
    =
    \begin{dcases}
    1 &(p<p_{subc}(n,\alpha)),\\
    \log(t_0+s) &(p = p_{subc}(n,\alpha)),\\
    (t_0+s)^{\frac{1}{2-\alpha} \left( n - \alpha \frac{p+1}{p-1} \right)} &(p> p_{subc}(n,\alpha)).
    \end{dcases}
\end{align}
On the other hand, in 
$\Omega_2(s)$,
the function
$\Theta$
is bounded from both above and below by
$\langle x \rangle^{2-\alpha}$.
Thus, we have
\begin{align}
    I\!I(s)
    &\le 
    C \int_{\Omega_2(s)} \langle x \rangle^{-\alpha\frac{p+1}{p-1} + (2-\alpha)\left( \lambda - \frac{p+1}{p-1} \right)} \,dx.
\end{align}
Here, we remark that the condition
\eqref{eq:2:rstr:lambda}
ensures the finiteness of the above integral,
provided that
$\varepsilon$ is taken sufficiently small
depending on $n$ and $\alpha$.
A straightforward computation shows
\begin{align}
    I\!I(s)
    &\le 
    C (t_0+s)^{\lambda - \frac{p+1}{p-1} + \frac{1}{2-\alpha} \left(n - \alpha \frac{p+1}{p-1} \right)}.
\end{align}
Since the above estimate is
better than \eqref{eq:2:I(s)} if
$p \le p_{subc}(n,\alpha)$
and is the same if
$p > p_{subc}(n,\alpha)$,
we conclude
\begin{align}
    \int_{\Omega}
    a(x)^{\frac{p+1}{p-1}}
    \Theta(s,x;t_0)^{\lambda - \frac{p+1}{p-1}} \,dx
    &\le
    C (t_0 + s)^{\lambda - \frac{p+1}{p-1}}
    h(s).
\label{eq:2:est:bdtrm}
\end{align}
Next, we compute the integral of the function
$(t_0 + s)^{\lambda - \frac{p+1}{p-1}}h(s)$
over 
$[0,t]$.
From the definition \eqref{eq:2:def:h} of $h(s)$,
one has the following:
If
$p<p_{subc}(n,\alpha)$,
then
\begin{align}
    \int_0^t (t_0 + s)^{\lambda - \frac{p+1}{p-1}}h(s) \,ds
    &\le
    C \begin{dcases}
    1
        &\left( \lambda < \frac{2}{p-1} \right),\\
    \log(t_0+t)
        &\left( \lambda = \frac{2}{p-1} \right),\\
    (t_0+t)^{\lambda - \frac{2}{p-1}}
        &\left( \lambda > \frac{2}{p-1} \right);
    \end{dcases}
\end{align}
If
$p = p_{subc}(n,\alpha)$,
then
\begin{align}
    \int_0^t (t_0 + s)^{\lambda - \frac{p+1}{p-1}}h(s) \,ds
    &\le
    C \begin{dcases}
    1
        &\left( \lambda < \frac{2}{p-1} \right),\\
    (\log(t_0+t))^2
        &\left( \lambda = \frac{2}{p-1} \right),\\
    (t_0+t)^{\lambda - \frac{2}{p-1}} \log(t_0+t) 
        &\left( \lambda > \frac{2}{p-1} \right);
    \end{dcases}
\end{align}
If
$p > p_{subc}(n,\alpha)$,
then
\begin{align}
    \int_0^t (t_0 + s)^{\lambda - \frac{p+1}{p-1}}h(s) \,ds
    &\le
    C \begin{dcases}
    1
        &\left( \lambda < \frac{4}{2-\alpha}\left(\frac{1}{p-1}-\frac{n-\alpha}{4} \right) \right),\\
    \log(t_0+t)
        &\left( \lambda = \frac{4}{2-\alpha}\left(\frac{1}{p-1}-\frac{n-\alpha}{4} \right) \right),\\
    (t_0+t)^{\lambda - \frac{4}{2-\alpha}\left(\frac{1}{p-1}-\frac{n-\alpha}{4} \right)}
        &\left( \lambda > \frac{4}{2-\alpha}\left(\frac{1}{p-1}-\frac{n-\alpha}{4} \right) \right).
    \end{dcases}
\end{align}
This completes the proof.
\end{proof}
We are now at the position to prove
Theorem \ref{thm1} (ii):
\begin{proof}[Proof of Theorem \ref{thm1} (ii)]
By Lemmas \ref{lem:Etilde} and \ref{lem:2:badterm}
with the constant
$t_2 \ge 1$
determined in Lemma \ref{lem:Etilde},
we have
\begin{align}
    &\tilde{E}(t;t_0,\lambda)
    + \int_{\Omega} a(x)|u(t,x)|^2 \Theta(t,x;t_0)^{\lambda} \,dx \\
    &\le 
    CI_0[u_0,u_1]
    + C
    \begin{cases}
    1
        &(\lambda < \min\{ \frac{4}{2-\alpha}(\frac{1}{p-1} - \frac{n-\alpha}{4} ), \frac{2}{p-1} \}),\\
    \log (t_0+t)
        &(\lambda = \min\{ \frac{4}{2-\alpha}(\frac{1}{p-1} - \frac{n-\alpha}{4} ), \frac{2}{p-1} \}, \ p \neq p_{subc}(n,\alpha)),\\
    (\log (t_0+t))^2
        &(\lambda = \frac{4}{2-\alpha}(\frac{1}{p-1} - \frac{n-\alpha}{4} ) = \frac{2}{p-1}, \ \mathrm{i.e.,} \ p = p_{subc}(n,\alpha)),\\
    (1+t)^{\lambda -\frac{4}{2-\alpha} ( \frac{1}{p-1} - \frac{n-\alpha}{4})}
        &(\lambda > \frac{4}{2-\alpha}(\frac{1}{p-1} - \frac{n-\alpha}{4} ),
        \ p > p_{subc}(n,\alpha)),\\
    (1+t)^{\lambda -\frac{2}{p-1}} \log (t_0+t)
        &(\lambda > \frac{2}{p-1},
        \ p = p_{subc}(n,\alpha)),\\
    (1+t)^{\lambda -\frac{2}{p-1}}
        &(\lambda > \frac{2}{p-1}, 
        \ p<p_{subc}(n,\alpha))
    \end{cases}
\end{align}
for $t_0 \ge t_2$ 
and $t \ge 0$.
On the other hand,
the definition \eqref{eq:2:Psi} of
$\Theta$
immediately gives the lower bound
\begin{align}
    &\tilde{E}(t;t_0,\lambda)
    + \int_{\Omega} a(x)|u(t,x)|^2 \Theta(t,x;t_0)^{\lambda} \,dx \\
    &\ge
    (t_0+t)^{\lambda + 1} E[u](t)
    + (t_0+t)^{\lambda} \int_{\Omega} a(x)|u(t,x)|^2 \,dx,
\end{align}
where
$E(t)$
is defined by \eqref{eq:def:E}.
Combining them,
we have the desired estimate.
\end{proof}

\appendix
\section{Outline of the proof of Proposition \ref{prop:ex}}
In this section, we give a proof of
Proposition \ref{prop:ex}.
The solvability and basic properties
of the solution of the linear problem
\eqref{ldwx} below can be found in,
for example,
\cite{DaSh95, Ikawa, IkeSo21FE, Nis16}.
Here, we give an outline of the argument
along with \cite{Ikawa}.
The existence of the unique mild solution
of the semilinear problem \eqref{dwx}
is proved by the contraction mapping principle.
This argument can be found in, e.g.,
\cite{CaHa, IkeSo21FE, IkTa05, Strauss}.
Here, we will give a proof based on
\cite{CaHa}.

\subsection{Linear problem}\label{sec:A:1}
Let
$n \in \mathbb{N}$,
and let
$\Omega$
be an open set in
$\mathbb{R}^n$
with a compact $C^2$-boundary
$\partial \Omega$
or
$\Omega = \mathbb{R}^n$.
We discuss the linear problem
\begin{align}
\label{ldwx}
    \left\{ \begin{array}{ll}
        \partial_t^2 u - \Delta u + a(x) \partial_t u = 0, &t>0, x \in \Omega, \\
        u(x,t) = 0, &t>0, x \in \partial \Omega,\\
        u(0,x) = u_0(x), \ 
        \partial_t u(0,x) = u_1(x),
        &x \in \Omega.
        \end{array} \right.
\end{align}
The function
$a(x)$
is nonnegative, bounded, and continuous
in $\mathbb{R}^n$.
Let
$\mathcal{H} : = H^1_0(\Omega) \times L^2(\Omega)$
be the real Hilbert space equipped with the
inner product
\begin{align}
    \left(
    \begin{pmatrix}
    u\\ v
    \end{pmatrix},
    \begin{pmatrix}
    w\\ z
    \end{pmatrix}
    \right)_{\mathcal{H}}
    = (u,w)_{H^1} + (v,z)_{L^2}.
\end{align}
Let
$\mathcal{A}$ be the operator
\begin{align}
    \mathcal{A} =
    \begin{pmatrix} 0 & 1 \\ \Delta & -a(x) \end{pmatrix}
\end{align}
defined on $\mathcal{H}$
with the domain
$D(\mathcal{A}) = (H^2(\Omega) \cap H^1_0(\Omega)) \times H^1_0(\Omega)$,
which is dense in
$\mathcal{H}$.

We first show the estimate
\begin{align}\label{app:1:dissipativeop}
    \left(
    \mathcal{A} \begin{pmatrix}
    u\\ v
    \end{pmatrix},
    \begin{pmatrix}
    u\\ v
    \end{pmatrix}
    \right)_{\mathcal{H}}
    \le
    \left\| (u,v)
    \right\|_{\mathcal{H}}^2
\end{align}
for
$(u,v) \in D(\mathcal{A})$.
Indeed, we calculate
\begin{align}
    \left(
    \mathcal{A} \begin{pmatrix}
    u\\ v
    \end{pmatrix},
    \begin{pmatrix}
    u\\ v
    \end{pmatrix}
    \right)_{\mathcal{H}}
    &=
    \left(
    \begin{pmatrix}
    v\\ \Delta u - a(x) v
    \end{pmatrix},
    \begin{pmatrix}
    u\\ v
    \end{pmatrix}
    \right)_{\mathcal{H}} \\
    &=
    (v,u)_{H^1} + (\Delta u - a(x)v, v)_{L^2} \\
    &=
    (\nabla v, \nabla u)_{L^2}
    + (v,u)_{L^2}
    - (\nabla v, \nabla u)_{L^2}
    - (a(x)v,v)_{L^2} \\
    &\le
    (v,u)_{L^2}
    \le \| (u,v)\|_{\mathcal{H}}^2.
\end{align}
Next, we prove that
there exists
$\lambda_0 \in \mathbb{R}$
such that for any
$\lambda \ge \lambda_0$,
the operator
$\lambda - \mathcal{A}$
is invertible, that is, for any
$(f,g) \in \mathcal{H}$,
we can find a unique
$(u,v) \in D(\mathcal{A})$
satisfying
\begin{align}\label{app:1:maximalop}
    \left( \lambda - \mathcal{A} \right)
    \begin{pmatrix}
    u\\ v
    \end{pmatrix}
    = \begin{pmatrix}
    f\\ g
    \end{pmatrix}.
\end{align}
Indeed, the above equation is equivalent with
\begin{align}
    \begin{dcases}
    \lambda u - v = f, \\
    \lambda v - \Delta u + a(x)v = g.
    \end{dcases}
\end{align}
We remark that the first equation implies
$v = \lambda u - f$.
Substituting this into the second equation,
one has
\begin{align}\label{app:1:ellip}
    (\lambda^2 + \lambda a(x)) u - \Delta u
    = h,
\end{align}
where
$h = g + (\lambda + a(x)) f \in L^2(\Omega)$.
Take an arbitrary constant
$\lambda_0 > 0$
and let
$\lambda \ge \lambda_0$
be fixed.
Associated with the above equation, we define
the bilinear functional
\begin{align}
    \mathfrak{a}(z,w) &=
    ( (\lambda^2 + \lambda a(x))z, w)_{L^2}
    + (\nabla z, \nabla w)_{L^2}
\end{align}
for
$z,w \in H_0^1(\Omega)$.
Since
$\lambda > 0$
and 
$a(x)$
is nonnegative and bounded,
$\mathfrak{a}$
is bounded:
$\mathfrak{a}(z,w) \le C \| z \|_{H^1} \|w \|_{H^1}$,
and coercive:
$\mathfrak{a}(z,z) \ge C \| z \|_{H^1}^2$.
Therefore, by the Lax--Milgram theorem
(see, e.g., \cite[Theorem 1.1.4]{CaHa}),
there exists a unique
$u \in H^1_0(\Omega)$
satisfying
$\mathfrak{a}(u,\varphi) = (h, \varphi)_{H^1}$
for any $\varphi \in H^1_0(\Omega)$.
In particular,
$u$
satisfies the equation \eqref{app:1:ellip}
in the distribution sense.
This shows
$\Delta u \in L^2(\Omega)$,
and hence, a standard elliptic estimate implies
$u \in H^2(\Omega)$
(see, for example, Brezis \cite[Theorem 9.25]{Br}).
Defining $v$ by
$v = \lambda u - f \in H_0^1(\Omega)$,
we find the solution
$(u,v) \in D(\mathcal{A})$
to the equation \eqref{app:1:maximalop}.

The above properties enable us to apply
the Hille--Yosida theorem
(see, e.g., \cite[Theorem 2.18]{Ikawa}),
and there exists a
$C_0$-semigroup
$U(t)$
on $\mathcal{H}$
satisfying the estimate
\begin{align}\label{app:1:sgest}
    \left\| U(t) \begin{pmatrix}
    u_0\\ u_1
    \end{pmatrix}
    \right\|_{\mathcal{H}}
    &\le
    e^{Ct} \| (u_0, u_1) \|_{\mathcal{H}}
\end{align}
with some constant
$C>0$.
Moreover, if
$(u_0, u_1) \in D(\mathcal{A})$,
then
$\mathcal{U}(t) := U(t) \begin{pmatrix}
u_0\\ u_1
\end{pmatrix}$
satisfies
\begin{align}\label{app:1:absteq}
    \frac{d}{dt} \mathcal{U}(t)
    = \mathcal{A} \mathcal{U}(t),
    \quad t > 0.
\end{align}
Therefore, the first component
$u(t)$
of $\mathcal{U}(t)$
satisfies
\begin{align}
    u \in C([0,\infty); H^2(\Omega))
    \cap C^1([0,\infty); H^1_0(\Omega))
    \cap C^2([0,\infty); L^2(\Omega))
\end{align}
and the equation \eqref{ldwx}
in
$C([0,\infty); L^2(\Omega))$.

For
$(u_0, u_1) \in \mathcal{H}$,
let
$\mathcal{U}(t) =
\begin{pmatrix}
u(t)\\ v(t)
\end{pmatrix}
:=
U(t) \begin{pmatrix}
u_0\\ u_1
\end{pmatrix}$.
We next show that
$u$
satisfies
\begin{align}\label{app:1:mildsolclass}
    u \in C([0,\infty); H^1_0(\Omega))
    \cap C^1([0,\infty); L^2(\Omega)).
\end{align}
The property
$u \in C([0,\infty); H^1_0(\Omega))$
is obvious from
$\mathcal{U} \in C([0,\infty); \mathcal{H})$.
In order to prove
$u \in C^1([0,\infty); L^2(\Omega))$,
we employ an approximation argument.
Let
$\{ (u_0^{(j)}, u_1^{(j)}) \}_{j=1}^{\infty}$
be a sequence in
$D(\mathcal{A})$
such that
$\lim_{j\to \infty} (u_0^{(j)}, u_1^{(j)})
= (u_0, u_1)$
in $\mathcal{H}$,
and let
$\mathcal{U}^{(j)}(t) =
\begin{pmatrix}
u^{(j)}\\ v^{(j)}
\end{pmatrix}
:=
U(t) \begin{pmatrix}
u_0^{(j)}\\ u_1^{(j)}
\end{pmatrix}$.
From
$(u_0^{(j)}, u_1^{(j)}) \in D(\mathcal{A})$,
$\mathcal{U}^{(j)}$
satisfies the equation \eqref{app:1:absteq},
and hence,
one obtains
$v^{(j)} = \partial_t u^{(j)}$.
For any fixed $T>0$,
the estimate \eqref{app:1:sgest}
implies
\begin{align}
    \sup_{t\in [0,T]} \| u^{(j)}(t) - u(t) \|_{L^2}
    &\le 
    e^{CT} \| (u_0^{(j)}-u_0, u_1^{(j)}-u_1) \|_{\mathcal{H}}
    \to 0,\\ 
    \sup_{t\in [0,T]} \| \partial_t u^{(j)}(t) - v(t) \|_{L^2}
    &\le 
    e^{CT} \| (u_0^{(j)}-u_0, u_1^{(j)}-u_1) \|_{\mathcal{H}}
    \to 0
\end{align}
as $j\to \infty$.
This shows 
$u \in C^1([0,T]; L^2(\Omega))$
and
$\partial_t u = v$.
Since $T>0$ is arbitrary, 
we obtain \eqref{app:1:mildsolclass}.

\subsection{Semilinear problem}\label{sec:A2}
Let us turn to study the semilinear problem
\eqref{dwx}.

\subsubsection{Uniqueness of the mild solution}
We first show the uniqueness of the mild solution of the integral equation
\begin{align}\label{app:1:inteq}
    \mathcal{U}(t)
    &=
    \begin{pmatrix}
    u(t)\\ v(t)
    \end{pmatrix}
    = U(t) \begin{pmatrix}
    u_0\\ u_1
    \end{pmatrix}
    + \int_0^t U(t-s)
    \begin{pmatrix}
    0\\ -|u(s)|^{p-1}u(s)
    \end{pmatrix}
    \,ds
\end{align}
in
$C([0,T_0); \mathcal{H})$
for arbitrary fixed $T_0>0$.
Hereafter,
as long as there is no risk of confusion,
we call both
$\mathcal{U}$
and the first component
$u$ of $\mathcal{U}$
mild solutions.
Let
$T_0>0$
and
$C_0 = e^{CT_0}$,
where $C$ is the constant in \eqref{app:1:sgest}.
Let
$\mathcal{U}(t) = \begin{pmatrix}
u\\ v
\end{pmatrix}$
and
$\mathcal{W}(t) = \begin{pmatrix}
w\\ z
\end{pmatrix}$
be two solutions to \eqref{app:1:inteq}
in
$C([0,T_0); \mathcal{H})$.
Take $T \in (0,T_0)$ arbitrary and
put
$K := \sup_{t \in [0,T]}
(\| \mathcal{U}(t) \|_{\mathcal{H}}
+ \| \mathcal{W}(t) \|_{\mathcal{H}}$.
Then, the estimate
\eqref{app:1:sgest} implies
\begin{align}
    \| \mathcal{U}(t) - \mathcal{W}(t) \|_{\mathcal{H}}
    &\le C_0 \int_0^t
    \| |w(s)|^{p-1}w(s) - |u(s)|^{p-1}u(s) \|_{L^2} \,ds.
\end{align}
Since the nonlinearity satisfies
\begin{align}
    | |w|^{p-1}w - |u|^{p-1} u |
    &\le
    C (|w|+|u|)^{p-1} |u-w|
\end{align}
and
$p$ 
fulfills the condition \eqref{p},
we apply
the H\"{o}lder and
the Gagliardo--Nirenberg inequality
$\| u \|_{L^{2p}} \le C \|u \|_{H^1}$
to obtain
\begin{align}
    \| \mathcal{U}(t) - \mathcal{W}(t) \|_{\mathcal{H}}
    &\le
    C_0 \int_0^t
    \| |u(s)|^{p-1}u(s) - |w(s)|^{p-1}w(s) \|_{L^2} \,ds \\
    &\le
    C_0 C \int_0^t
    ( \| u(s)\|_{L^{2p}} + \| w(s) \|_{L^{2p}} )^{p-1} \| u(s) - w(s) \|_{L^{2p}} \,ds \\
    &\le
    C_0 C \int_0^t
    ( \| u(s)\|_{H^1} + \| w(s) \|_{H^1} )^{p-1} \| u(s) - w(s) \|_{H^1} \,ds \\
    &\le
    C_0 C K^{p-1}
    \int_0^t
    \| \mathcal{U}(s) - \mathcal{W}(s) \|_{\mathcal{H}} \,ds
\label{app:1:diffest}
\end{align}
for $t \in [0,T]$.
Therefore, by the Gronwall inequality,
we have
$ \| \mathcal{U}(t) - \mathcal{W}(t) \|_{\mathcal{H}} = 0$
for $t \in [0,T]$.
Since $T \in (0,T_0)$
is arbitrary,
we conclude
$\mathcal{U}(t) = \mathcal{W}(t)$
for all $t \in [0,T_0)$.

\subsubsection{Existence of the mild solution}\label{sec:app:ex}
Here, we show the existence of the mild solution.

Let
$T_0 > 0$
be arbitrarily fixed.
For
$T \in (0,T_0)$
and
$\mathcal{U} = \begin{pmatrix}
u\\ v
\end{pmatrix}
\in C([0,T]; \mathcal{H})$,
we define the mapping
\begin{align}
    \mathbf{\Phi} (\mathcal{U})(t) 
    &= U(t) \begin{pmatrix}
    u_0\\ u_1
    \end{pmatrix}
    + \int_0^t U(t-s)
    \begin{pmatrix}
    0\\ -|u(s)|^{p-1}u(s)
    \end{pmatrix}
    \,ds.
\end{align}
Let
$C_0 = e^{CT_0}$,
where $C$ is the constant in \eqref{app:1:sgest}.
Then, we have
\begin{align}
    \left\| U(t) \begin{pmatrix}
    u_0\\ u_1
    \end{pmatrix}
    \right\|_{\mathcal{H}}
    &\le
    C_0 \| (u_0, u_1) \|_{\mathcal{H}}
\end{align}
for $t \in (0,T_0)$.
Let
$K = 2C_0 \| (u_0, u_1) \|_{\mathcal{H}}$
and define
\begin{align}
    M_{T,K} :=
    \left\{
    \mathcal{U} = \begin{pmatrix}
    u\\ v
    \end{pmatrix}
    \in C([0,T]; \mathcal{H}) ;\,
    \sup_{t\in [0,T]}
    \| (u(t), v(t)) \|_{\mathcal{H}}
    \le K
    \right\}.
\end{align}
$M_{T,K}$
is a complete metric space with respect to the metric
\begin{align}
    d(\mathcal{U}, \mathcal{W}) = \sup_{t\in [0,T]}
    \| (u(t)-w(t), v(t)- z(t) )\|_{\mathcal{H}}
\end{align}
for
$\mathcal{U} = \begin{pmatrix}
u\\ v
\end{pmatrix}$
and
$\mathcal{W} = \begin{pmatrix}
w\\ z
\end{pmatrix}$.
We shall prove that
$\Phi$
is the contraction mapping on
$M_{T,R}$,
provided that
$T$
is sufficiently small.

First, we show that
$\mathbf{\Phi} (\mathcal{U}) \in M_{T,K}$
for $\mathcal{U} \in M_{T,K}$.
By the estimate \eqref{app:1:sgest}
and the Gagliardo--Nirenberg inequality,
we obtain for $t \in [0,T]$,
\begin{align}
    \| \mathbf{\Phi}(\mathcal{U})(t) \|_{\mathcal{H}}
    &\le
    \frac{K}{2} + C_0 \int_0^t \| |u(s)|^{p-1}u (s) \|_{L^2} \,ds \\
    &\le
    \frac{K}{2} + C_0 \int_0^t \| u(s) \|_{L^{2p}}^p \,ds \\
    &\le 
    \frac{K}{2} + C_0 C \int_0^t \| u(s) \|_{H^1}^p \,ds \\
    &\le
    \frac{K}{2} + C_0 C T K^p.
\label{app:1:contraction}
\end{align}
Therefore, taking
$T$ sufficiently small so that
\begin{align}
    \frac{K}{2} + C_0 C T K^p \le K
\end{align}
holds,
we see that
$\mathbf{\Phi}(\mathcal{U}) \in M_{T,K}$.
Moreover, for
$\mathcal{U} = \begin{pmatrix}
u\\ v
\end{pmatrix}$,
$\mathcal{W} = \begin{pmatrix}
w\\ z
\end{pmatrix}
\in M_{T,R}$,
the same computation as in \eqref{app:1:diffest}
yields for $t\in [0,T]$,
\begin{align}
    d (\mathbf{\Phi}(\mathcal{U}),
    \mathbf{\Phi}(\mathcal{W}) )
    &\le
    C_0 C T K^{p-1}
    d( \mathcal{U}, \mathcal{W} ).
\end{align}
Thus, retaking $T$ smaller if needed so that
\begin{align}
    C_0 C T K^{p-1} \le \frac{1}{2},
\end{align}
we have the contractivity of
$\mathbf{\Phi}$.
Thus, by the contraction mapping principle,
we see that there exists a fixed point
$\mathcal{U} = \begin{pmatrix}
u\\ v
\end{pmatrix} \in M_{T,K}$,
that is,
$\mathcal{U}$
satisfies the integral equation \eqref{app:1:inteq}.
We postpone to verify
$u \in C^1([0,T]; L^2(\Omega))$
and
$\partial_t u = v$
after proving the approximation property below.

\subsubsection{Blow-up alternative}\label{sec:bu}
Let
$T_{\mathrm{max}} = T_{\mathrm{max}}(u_0, u_1)$
be the maximal existence time of the mild solution defined by
\begin{align}
    T_{\mathrm{max}}
    &= \sup \left\{
    T \in (0,\infty] ;\,
    {}^{\exists} \mathcal{U} =
    \begin{pmatrix}
    u\\ v
    \end{pmatrix}
    \in
    C([0,T) ;\, \mathcal{H})
    \ \mbox{satisfies \eqref{app:1:inteq}}
    \right\}.
\end{align}
We show that
if $T_{\mathrm{max}} < \infty$,
the corresponding unique mild solution
$\mathcal{U} =
\begin{pmatrix}
u\\ v
\end{pmatrix}$
must satisfy
\begin{align}\label{app:bu}
    \lim_{t \to T_{\mathrm{max}}-0}
    \| \mathcal{U}(t) \|_{\mathcal{H}}
    = \infty.
\end{align}
Indeed, if
$m := \liminf_{t \to T_{\mathrm{max}}-0} \| \mathcal{U}(t) \|_{\mathcal{H}} < \infty$,
then there exists a
monotone increasing sequence
$\{ t_j \}_{j=1}^{\infty}$
in
$(0, T_{\mathrm{max}})$
such that 
$\lim_{j\to\infty}t_j = T_{\mathrm{max}}$
and
$\lim_{j\to \infty}\| \mathcal{U}(t_j) \|_{\mathcal{H}} = m$.
Let
$T_0 > T_{\mathrm{max}}$
be arbitrary fixed and let
$C_0 = e^{CT_0}$
as in Section \ref{sec:app:ex}.
Applying the same argument as in
Section \ref{sec:app:ex} with
replacement
$(u_0,u_1)$
by
$\mathcal{U}(t_j)$,
one can find there exists
$T$ depending only on
$p$, $m$, and $C_0$
such that there exists a mild solution
on the interval
$[t_j, t_j + T]$.
However, this contradicts the definition of
$T_{\mathrm{max}}$
when $j$ is large.
Thus, we have \eqref{app:bu}.

\subsubsection{Continuous dependence on the initial data}\label{sec:conti:dep}
Let
$(u_0,u_1) \in \mathcal{H}$
and
$T < T_0 < T_{\mathrm{max}}(u_0,u_1)$.
We take
$C_0 = e^{CT_0}$
as in Section \ref{sec:app:ex}.
Let
$\{ (u_0^{(j)}, u_1^{(j)})\}_{j=1}^{\infty}$
be a sequence in
$\mathcal{H}$
such that
$(u_0^{(j)}, u_1^{(j)}) \to (u_0,u_1)$
in
$\mathcal{H}$
as $j \to \infty$.
Then, we will prove that,
for sufficiently large
$j$,
$T_{\mathrm{max}}(u_0^{(j)}, u_1^{(j)}) > T$
and the corresponding solution
$\mathcal{U}^{(j)}$
with the initial data
$(u_0^{(j)}, u_1^{(j)})$
satisfies
\begin{align}
\label{app:1:contidep}
    \lim_{j\to \infty}
    \sup_{t\in [0,T]}
    \| \mathcal{U}^{(j)}(t) - \mathcal{U}(t) \|_{\mathcal{H}} = 0.
\end{align}
Let
$C_1 = 2 \sup_{t\in [0,T]} \| \mathcal{U}(t) \|_{\mathcal{H}}$
and let
\begin{align}
    \tau_j := \sup \left\{
    t \in [0,T_{\mathrm{max}}(u_0^{(j)},u_1^{(j)})) ;\,
    \sup_{t\in [0,T]} \| \mathcal{U}^{(j)}(t) \|_{\mathcal{H}}
    \le 2 C_1
    \right\}.
\end{align}
Since
$(u_0^{(j)},u_1^{(j)}) \to
(u_0,u_1)$
in $\mathcal{H}$
as $j\to \infty$,
we have
$\| (u_0^{(j)},u_1^{(j)}) \|_{\mathcal{H}}
\le C_1$
for large
$j$,
which ensures
$\tau_j > 0$
for such $j$.
Moreover, the same computation as in
\eqref{app:1:diffest}
and the Gronwall inequality
imply,
for $t \in [0,\min\{\tau_j, T\}]$,
\begin{align}
\label{app:1:contidepest}
    \| \mathcal{U}^{(j)}(t) - \mathcal{U}(t) \|_{\mathcal{H}}
    \le
    C_0
    \| \mathcal{U}^{(j)}(0) - \mathcal{U}(0) \|_{\mathcal{H}}
    \exp \left( C C_1^{p-1} T \right).
\end{align}
Note that the right-hand side tends to zero
as $j \to \infty$.
From this and the definition of
$C_1$,
we obtain
\begin{align}
    \| \mathcal{U}^{(j)}(t) \|_{\mathcal{H}}
    \le
    C_1 
    \quad (t\in [0,\min\{\tau_j, T\}])
\end{align}
for large $j$.
By the definition of $\tau_j$,
the above estimate implies
$\tau_j > T$,
and hence,
$T_{\mathrm{max}}(u_0^{(j)},u_1^{(j)}) > T$.
From this, the estimate \eqref{app:1:contidepest}
holds for $t \in [0,T]$.
Letting
$j \to \infty$
in \eqref{app:1:contidepest}
gives
\eqref{app:1:contidep}.

\subsubsection{Regularity of solution}\label{sec:regularity}
Next, we discuss the regularity of the solution.
Let
$(u_0, u_1) \in D(\mathcal{A})$
and
$T_{\mathrm{max}} = T_{\mathrm{max}}(u_0,u_1)$.
Then, we will show that the corresponding
mild solution
$\mathcal{U}$
satisfies
\begin{align}
    \mathcal{U} \in
    C([0,T_{\mathrm{max}}) ; D(\mathcal{A}))\cap 
    C^1([0,T_{\mathrm{max}}) ; \mathcal{H}).
\end{align}
Take
$T \in (0,T_{\mathrm{max}})$
arbitrary.
First, from Section \ref{sec:A:1},
the linear part of the mild solution
satisfies
$\mathcal{U}_{L}(t)
=U(t) \begin{pmatrix}
u_0\\ u_1
\end{pmatrix}\in
C([0,\infty); D(\mathcal{A}))
\cap C^1([0,\infty) ; \mathcal{H})$.
This implies,
for $h>0$
and $t \in [0,T-h]$,
\begin{align}\label{app:1:reg:lin}
    \| \mathcal{U}_{L}(t+h) - \mathcal{U}_{L}(t) \|_{\mathcal{H}}
    \le C h.
\end{align}
Thus, it suffices to show
\begin{align}
    \mathcal{U}_{NL}(t)
    &:= \int_0^t U(t-s)
    \begin{pmatrix}
    0\\ -|u(s)|^{p-1}u(s)
    \end{pmatrix}
    \,ds \\
    &\in C([0,T] ; D(\mathcal{A}))\cap 
    C^1([0,T] ; \mathcal{H}).
\label{app:1:reg:nl}
\end{align}
By the changing variable
$t+h-s \mapsto s$,
we calculate
\begin{align}
    \mathcal{U}_{NL}(t+h)
    - \mathcal{U}_{NL}(t) 
    &=
    \int_0^{t+h} U(t-s)
    \begin{pmatrix}
    0\\ -|u(s)|^{p-1}u(s)
    \end{pmatrix}
    \,ds \\
    &\quad
    -
    \int_0^t U(t-s)
    \begin{pmatrix}
    0\\ -|u(s)|^{p-1}u(s)
    \end{pmatrix}
    \,ds \\
    &=
    \int_0^t U(s)
    \begin{pmatrix}
    0\\ -|u|^{p-1}u(t+h-s)
    + |u|^{p-1}u(t-s)
    \end{pmatrix}
    \,ds\\
    &\quad
    + \int_t^{t+h}
    U(s)
    \begin{pmatrix}
    0\\ -|u|^{p-1}u(t+h-s)
    \end{pmatrix}
    \,ds.
\end{align}
Therefore, the same computation as
in \eqref{app:1:diffest} and \eqref{app:1:contraction}
implies
\begin{align}
    \| \mathcal{U}_{NL}(t+h)
    - \mathcal{U}_{NL}(t) \|_{\mathcal{H}}
    &\le
    C \int_0^t
    \| u(s+h) - u(s) \|_{H^1} \,ds
    + Ch.
\end{align}
Combining this with \eqref{app:1:reg:lin},
one obtains
\begin{align}
    \| \mathcal{U}(t+h)
    - \mathcal{U}(t) \|_{\mathcal{H}}
    &\le
    Ch
    + \int_0^t
    \| \mathcal{U}(s+h)
    - \mathcal{U}(s) \|_{\mathcal{H}}
    \,ds.
\end{align}
The Gronwall inequality implies
\begin{align}
    \| \mathcal{U}(t+h)
    - \mathcal{U}(t) \|_{\mathcal{H}}
    &\le Ch.
\end{align}
This further yields
\begin{align}
    \| -|u|^{p-1}u(t+h)
    + |u|^{p-1}u(t) \|_{H^1}
    &\le Ch,
\end{align}
that is, the nonlinearity is
Lipschitz continuous in $H^1_0(\Omega)$.
From this, we can see
$-|u|^{p-1}u \in W^{1,\infty}(0,T;H^1_0(\Omega))$
(see e.g. \cite[Corollary 1.4.41]{CaHa}).
Thus, we can differentiate
the expression
\begin{align}
    \int_0^t U(t-s)
    \begin{pmatrix}
    0\\ -|u|^{p-1}u(s)
    \end{pmatrix}
    \,ds
    =
    \int_0^t U(s)
    \begin{pmatrix}
    0\\ -|u|^{p-1}u(t-s)
    \end{pmatrix}
    \,ds
\end{align}
with respect to $t$
in $\mathcal{H}$,
and it implies
$\mathcal{U}_{NL} \in C^1([0,T]; \mathcal{H})$.
Finally, for
$h > 0$ and $t \in [0,T-h]$,
we have
\begin{align}
    \frac{1}{h}
    \left( U(t) - I \right)
    \mathcal{U}_{NL}(t)
    &=
    \frac{1}{h}
    \int_0^t U(t+h-s)
    \begin{pmatrix}
    0\\ -|u|^{p-1}u(s)
    \end{pmatrix}
    \,ds
    -
    \frac{1}{h}
    \int_0^t U(t-s)
    \begin{pmatrix}
    0\\ -|u|^{p-1}u(s)
    \end{pmatrix}
    \,ds \\
    &=
    \frac{1}{h}
    \left( \mathcal{U}_{NL}(t+h) - \mathcal{U}_{NL}(t) \right) 
    -
    \frac{1}{h}
    \int_t^{t+h} U(t+h-s)
    \begin{pmatrix}
    0\\ -|u|^{p-1}u(s)
    \end{pmatrix}
    \,ds.
\end{align}
This implies
$\mathcal{U}(t) \in D(\mathcal{A})$
and
\begin{align}
    \frac{d}{dt} \mathcal{U}_{NL}(t)
    &= \mathcal{A} \mathcal{U}_{NL}(t)
    +
    \begin{pmatrix}
    0\\ -|u|^{p-1}u(t)
    \end{pmatrix}.
\end{align}
Moreover, the above equation and
$\mathcal{U} \in C^1([0,T]; \mathcal{H})$
lead to 
$\mathcal{U} \in C([0,T]; D(\mathcal{A}))$.
This proves the property 
\eqref{app:1:reg:nl}.
We also remark that
the first component
$u$ of $\mathcal{U}$
is a strong solution to \eqref{dwx}.

\subsubsection{Approximation of the mild solution by strong solutions}\label{sec:approx}
Let
$(u_0, u_1) \in \mathcal{H}$
and
$T_{\mathrm{max}} = T_{\mathrm{max}}(u_0,u_1)$.
Let
$\{ (u_0^{(j)}, u_1^{(j)}) \}_{j=1}^{\infty}$
be a sequence in
$D(\mathcal{A})$
satisfying
$\lim_{j\to \infty}(u_0^{(j)}, u_1^{(j)}) 
= (u_0,u_1)$
in
$\mathcal{H}$.
Take
$T \in (0,T_{\mathrm{max}})$
arbitrary.
Then, the results of Sections
\ref{sec:conti:dep} and \ref{sec:regularity}
imply that
$T_{\mathrm{max}}(u_0^{(j)}, u_1^{(j)}) > T$
for large $j$,
and the corresponding mild solution
$\mathcal{U}^{(j)}
= \begin{pmatrix}
u^{(j)}\\ v^{(j)}
\end{pmatrix}$
with the initial data
$(u_0^{(j)}, u_1^{(j)})$
satisfies
$\mathcal{U}^{(j)} \in C([0,T]; D(\mathcal{A}))
\cap C^1([0,T]; \mathcal{H})$.
Moreover,
$\partial_t u^{(j)} = v^{(j)}$
holds and
$u^{(j)}$
is a strong solution to \eqref{dwx}.
By the result of Section \ref{sec:conti:dep},
we see that 
\begin{align}
    \lim_{j\to \infty}
    \sup_{t\in [0,T]} \| u^{(j)} (t) - u(t) \|_{H^1} = 0,\\
    \lim_{j\to \infty}
    \sup_{t\in [0,T]} \| \partial_t u^{(j)} (t) - v(t) \|_{L^2} = 0,
\end{align}
which yields
$u \in C^1([0,T] ; L^2(\Omega))$
and
$\partial_t u = v$.
Namely, we have the property stated
at the end of Section \ref{sec:app:ex}.

\subsubsection{Finite propagation property}\label{sec:fpp}
Here, we show the finite propagation property
for the mild solution.
In what follows, we use the notations
$B_R(x_0) :=
\{ x \in \mathbb{R}^n ;\, |x-x_0| < R \}$
for $x_0 \in \mathbb{R}^n$
and $R>0$.
Let
$T \in (0,T_{\max}(u_0,u_1))$
and $R>0$.
Assume that
$(u_0, u_1) \in \mathcal{H}$
satisfies
$\mathrm{supp\,}u_0 \cup \mathrm{supp\,}u_1
\subset B_R(0)\cap \Omega$.
Let
$u \in C([0,T]; H^1_0(\Omega)) \cap C^1([0,T]; L^2(\Omega))$
be the mild solution of \eqref{dwx}.
Then, we have
\begin{align}\label{eq:app:fpp}
    \mathrm{supp\,}u(t,\cdot)
    \subset
    B_{t+R}(0) \cap \Omega
    \quad
    (t \in [0,T]).
\end{align}

To prove this,
we modify the argument of \cite{John}
in which the classical solution is treated.
Let
$(t_0, x_0) \in [0,T] \times \Omega$
be a point such that
$|x_0| > t_0 + R$
and define
\begin{align}
    \Lambda(t_0,x_0)
    &=
    \{ (t,x) \in (0,T) \times \Omega ;\, 0< t < t_0, |x-x_0| < t_0- t\} \\
    &=
    \bigcup_{t\in (0,t_0)} \left( \{t \} \times (B_{t_0-t}(x_0)\cap \Omega) \right)).
\end{align}
It suffices to show
$u=0$
in $\Lambda (t_0,x_0)$.
We also put
$S_{t_0-t} := \partial B_{t_0-t}(x_0)\cap\Omega$
and 
$S_{b,t_0-t} := B_{t_0-t}(x_0) \cap \partial \Omega$.
Note that
$\partial (B_{t_0-t}(x_0)\cap \Omega)
= \overline{S_{t_0-t} \cup S_{b,t_0-t}}$
holds.

First, we further assume
$(u_0, u_1) \in D(\mathcal{A})$.
Then, by the result of Section \ref{sec:regularity},
$u$
becomes the strong solution.
This ensures that the following computations
make sense.

Define
\begin{align}
    \mathcal{E}(t;t_0,x_0) &:=
    \frac{1}{2}
    \int_{B_{t_0-t}(x_0)\cap \Omega}
    (|\partial_t u(t,x)|^2
    + |\nabla u(t,x)|^2 + |u(t,x)|^2 )\,dx
\end{align}
for $t \in [0,t_0]$.
By differentiating in $t$
and applying the integration by parts,
we have
\begin{align}
    \frac{d}{dt} \mathcal{E}(t;t_0,x_0) 
    &=
    \int_{B_{t_0-t}(x_0)\cap \Omega}
    \left( \partial_t^2 u - \Delta u + u \right)\partial_t u \,dx\\
    &\quad
    -\frac{1}{2}
    \int_{S_{t_0-t}\cup S_{b,t_0-t}}
    (|\partial_t u|^2 + |\nabla u|^2 + |u|^2
    - 2 (\mathbf{n}\cdot \nabla u) \partial_t u
    ) \,dS,
\end{align}
where
$\mathbf{n}$
is the unit outward normal vector of
$S_{t_0-t}\cup S_{b,t_0-t}$
and
$dS$ denotes the surface measure.
The Schwarz inequality implies
the second term of the right-hand side is
nonpositive, and hence, we can omit it.
Using the equation \eqref{dwx} to the
first term and the Gagliardo--Nirenberg inequality
$\| u(t) \|_{L^{2p}(B_{t_0-t}(x_0)\cap \Omega)}
\le C \| u(t) \|_{H^1(B_{t_0-t}(x_0)\cap \Omega)}$,
we can see that
\begin{align}
    \frac{d}{dt} \mathcal{E}(t;t_0,x_0) 
    &\le
    C \left(
    \| u(t) \|_{H^1(B_{t_0-t}(x_0)\cap \Omega)}^{2p}
    + \| \partial_t u(t) \|_{L^2(B_{t_0-t}(x_0)\cap \Omega)}^2
    + \| u(t) \|_{L^2(B_{t_0-t}(x_0)\cap \Omega)}^2
    \right) \\
    &\le
    C \mathcal{E}(t;t_0,x_0),
\end{align}
where we have also used
$\| u(t) \|_{H^1(B_{t_0-t}(x_0)\cap \Omega)}$
is bounded for $t \in (0,t_0)$.
Noting that the support condition of the initial data
implies
$\mathcal{E}(0;t_0,x_0) = 0$,
we obtain from the above inequality that
$\mathcal{E}(t;t_0,x_0) = 0$
for $t \in [0,t_0]$.
This yields
$u = 0$
in
$\Lambda (t_0,x_0)$.

Finally, for the general case
$(u_0, u_1) \in \mathcal{H}$,
we take an arbitrary small
$\varepsilon > 0$
and
a sequence
$\{ (u_0^{(j)}, u_1^{(j)})\}_{j=1}^{\infty}$
in $D(\mathcal{A})$
such that
$\mathrm{supp\,}u_0^{(j)} \cup
\mathrm{supp\,}u_1^{(j)}
\subset B_{R+\varepsilon}(0) \cap \Omega$
and
$\lim_{j\to \infty} (u_0^{(j)}, u_1^{(j)})
= (u_0,u_1)$
in
$\mathcal{H}$.
Here, we remark that such a sequence
can be constructed by the form
$(u_0^{(j)}, u_1^{(j)}) =
( \phi_{\varepsilon} \tilde{u}_0^{(j)}, \phi_{\varepsilon} \tilde{u}_1^{(j)})$,
where
$\{ (\tilde{u}_0^{(j)}, \tilde{u}_1^{(j)}) \}$
is a sequence in
$D(\mathcal{A})$
which converges to
$(u_0,u_1)$
in $\mathcal{H}$ as $j\to \infty$,
and
$\phi_{\varepsilon} \in C_0^{\infty}(\mathbb{R}^n)$
is a cut-off function satisfy
$0 \le \phi_{\varepsilon} \le 1$,
$\phi_{\varepsilon} = 1$ on $B_{R}(0)$,
and
$\phi_{\varepsilon} = 0$ on
$\mathbb{R}^n \setminus B_{R+\varepsilon}(0)$.
Then, the result of Section \ref{sec:regularity}
shows that the corresponding strong solution
$u^{(j)}$ to
$(u_0^{(j)}, u_1^{(j)})$
satisfies
$\mathrm{supp\,}u^{(j)}(t,\cdot) \subset
B_{R+\varepsilon+t}(0)$.
Moreover, the result of Section \ref{sec:approx}
leads to
$\lim_{j\to \infty} u^{(j)} = u$
in
$C([0,T]; H^1_0(\Omega))$.
Hence, we conclude
$\mathrm{supp\,} u(t,\cdot) \subset
B_{R+\varepsilon+t}(0)$.
Since
$\varepsilon$
is arbitrary, we have \eqref{eq:app:fpp}.

\subsubsection{Existence of the global solution}
Finally, we show the existence of
the global solution to \eqref{dwx}.
Let
$(u_0, u_1) \in \mathcal{H}$
and suppose that
$T_{\max}(u_0,u_1)$
is finite.
Then, by the blow-up alternative (Section \ref{sec:bu}),
the corresponding mild solution
$u$
must satisfy
\begin{align}\label{eq:app:bu}
    \lim_{t \to T_{\max}-0}
    \| (u(t), \partial_t u(t)) \|_{\mathcal{H}} = \infty.
\end{align}
Let
$\{ (u_0^{(j)}, u_1^{(j)} ) \}_{j=1}^{\infty}$
be a sequence in
$D(\mathcal{A})$
such that
$\lim_{j\to \infty} (u_0^{(j)},u_1^{(j)}) = (u_0,u_1)$
in
$\mathcal{H}$,
and let
$u^{(j)}$
be the corresponding strong solution
with the initial data
$(u_0^{(j)}, u_1^{(j)})$.

Using the integration by parts and the equation \eqref{dwx},
we calculate
\begin{align}
    \frac{d}{dt}
    \left[
       \frac{1}{2}\left(
       \| \partial_t u^{(j)}(t) \|_{L^2}^2
        + \| \nabla u^{(j)}(t) \|_{L^2}^2
        \right)
        +
        \frac{1}{p+1}
        \| u^{(j)}(t) \|_{L^{p+1}}^{p+1}
    \right]
    &=
    - \| \partial_t u^{(j)}(t) \|_{L^2}^2.
\end{align}
This and the Gagliardo--Nirenberg inequality imply
\begin{align}
    \| \partial_t u^{(j)}(t) \|_{L^2}^2
    + \| \nabla u^{(j)}(t) \|_{L^2}^2
    &\le
    C \left( 
    \| u^{(j)}_1 \|_{L^2}^2
        + \| \nabla u^{(j)}_0\|_{L^2}^2
        + \| u^{(j)}_0 \|_{H^1}^{p+1}
    \right).
\end{align}
Moreover, by 
\begin{align}
    u(t) = u_0 + \int_0^t \partial_t u(s) \,ds,
\end{align}
one obtains the bound
\begin{align}\label{eq:app:ge}
    \| (u^{(j)}(t), \partial_t u^{(j)}(t)) \|_{\mathcal{H}}^2
    &\le
    C (1+T)^2 \left( 
    \| u^{(j)}_1 \|_{L^2}^2
        + \| \nabla u^{(j)}_0\|_{L^2}^2
        + \| u^{(j)}_0 \|_{H^1}^{p+1}
    \right)
    \quad
\end{align}
for $t \in [0,T]$.
This and the blow-up alternative
(Section \ref{sec:bu})
show
$T_{\max}(u_0^{(j)}, u_1^{(j)}) = \infty$
for all $j$.
The bound \eqref{eq:app:ge} with
$T = T_{\max}(u_0,u_1)$
also yields that
\begin{align}\label{eq:app:ge2}
    \sup_{j \in \mathbb{N}} \sup_{t \in [0,T_{\max}(u_0,u_1)]}
    \| (u^{(j)}(t), \partial_t u^{(j)}(t)) \|_{\mathcal{H}}^2
    < \infty.
\end{align}
On the other hand, from the result of
Section \ref{sec:approx},
we have
\begin{align}\label{eq:app:ge3}
    \lim_{j\to \infty}
    \sup_{t \in [0,T]}
    \| (u^{(j)}(t)- u(t),
    \partial_t u^{(j)}(t) - \partial_t u(t)) \|_{\mathcal{H}} = 0
\end{align}
for any
$T \in (0,T_{\max}(u_0,u_1))$.
However,
\eqref{eq:app:ge2} and \eqref{eq:app:ge3}
contradict \eqref{eq:app:bu}.
Thus, we conclude
$T_{\max}(u_0,u_1) = \infty$.


\section{Proof of Preliminary lemmas}

\subsection{Proof of Lemma \ref{lem:A:ep}}

\begin{proof}[Proof of Lemma \ref{lem:A:ep}]
We define
\begin{align}
    b_1(x) = \Delta \left( \frac{a_0}{(n-\alpha)(2-\alpha)} \langle x \rangle^{2-\alpha} \right)
    = a_0 \langle x \rangle^{-\alpha} + \frac{a_0 \alpha}{n-\alpha} \langle x \rangle^{-\alpha-2}
\end{align}
and
$b_2(x) = a(x) - b_1(x)$.
By
\begin{align}
    \frac{b_2(x)}{a(x)}
    = \frac{1}{\langle x \rangle^{\alpha}a(x)}
    \left(
        \langle x \rangle^{\alpha} a(x) - a_0 - \frac{a_0 \alpha}{n-\alpha} \langle x \rangle^{-2} 
    \right)
\end{align}
and the assumption \eqref{assum:a},
there exists a constant
$R_{\varepsilon} > 0$
such that
$|b_2(x)| \le \varepsilon a(x)$
holds for
$|x| > R_{\varepsilon}$.
Let
$\eta_{\varepsilon} \in C_0^{\infty}(\mathbb{R}^n)$
satisfy
$0 \le \eta_{\varepsilon}(x) \le 1$
for $x \in \mathbb{R}^n$
and
$\eta_{\varepsilon} (x) = 1$
for $|x| < R_{\varepsilon}$.
Let
$N(x)$
denote the Newton potential, that is,
\begin{align}
    N(x) =
    \begin{dcases}
    \frac{|x|}{2} &(n=1),\\
    \frac{1}{2\pi} \log \frac{1}{|x|} &(n=2),\\
    \frac{\Gamma(n/2+1)}{n(n-2)\pi^{n/2}} |x|^{2-n} &(n\ge 3).
    \end{dcases}
\end{align}
We define
\begin{align}
    A_{\varepsilon}(x) =
    A_0 
    + \frac{a_0}{(n-\alpha)(2-\alpha)} \langle x \rangle^{2-\alpha}
    - N \ast \left( \eta_{\varepsilon} b_2 
    \right),
\end{align}
where
$A_0 > 0$
is a sufficiently large constant determined later.
We show that the above
$A_{\varepsilon}(x)$
has the desired properties.
First, we compute
\begin{align}
    \Delta A_{\varepsilon}(x)
    &= b_1(x) + \eta_{\varepsilon}(x)b_2(x)
    = a(x) - (1-\eta_{\varepsilon})b_2(x),
\end{align}
which implies \eqref{A1}.
Next, since
$\eta_{\varepsilon}b_2$
has the compact support,
$N\ast (\eta_{\varepsilon}b_2)$
satisfies
\begin{align}
    |N\ast (\eta_{\varepsilon}b_2) (x)|
    &\le
    C \begin{cases}
        1+ \log \langle x \rangle &(n=2)\\
        \langle x \rangle^{2-n} &(n=1, n\ge 3)
    \end{cases},
    \quad
    |\nabla N\ast (\eta_{\varepsilon}b_2) (x)|
    \le C \langle x \rangle^{1-n}
\end{align}
with some constant
$C = C(n,R_{\varepsilon},\| a \|_{L^{\infty}}, \alpha, a_0, \varepsilon) > 0$,
and the former estimate leads to \eqref{A2},
provided that
$A_0$
is sufficiently large.
Moreover, the latter estimate shows
\begin{align}
    \lim_{|x|\to \infty}
    \frac{|\nabla A_{\varepsilon}(x)|^2}{a(x) A_{\varepsilon}(x)}
    &=
    \lim_{|x|\to \infty}
    \frac{1}{\langle x \rangle^{\alpha} a(x)}
    \cdot
    \frac{1}{\langle x \rangle^{\alpha-2} A_{\varepsilon}(x)}
    \left| \frac{a_0}{n-\alpha} \langle x \rangle^{-1} x - \langle x \rangle^{\alpha-1} \nabla N\ast(\eta_{\varepsilon}b_2) \right|^2 \\
    &= \frac{2-\alpha}{n-\alpha},
\end{align}
which implies the inequality \eqref{A3}
for sufficiently large
$x$.
Finally, taking
$A_0$
sufficiently large, we have
\eqref{A3} for any $x \in \mathbb{R}^n$.
\end{proof}
\subsection{Properties of Kummer's function}
To prove Lemma \ref{lem:phi.beta},
we prepare some properties of
Kummer's function.
\begin{lemma}\label{lem:kummer}
Kummer's confluent hypergeometric function
$M(b,c;s)$
satisfies the properties listed as follows.
\begin{itemize}
    \item[(i)] $M(b,c;s)$
    satisfies Kummer's equation
    \begin{align}
        su''(s) + (c-s) u'(s) -bu(s) = 0.
    \end{align}
    \item[(ii)]
    If $c \ge b > 0$, then $M(b,c;s) > 0$ for $s \ge 0$
    and
    \begin{align}\label{eq:M:infty}
        \lim_{s\to \infty} \frac{M(b,c;s)}{s^{b-c}e^s} = \frac{\Gamma(c)}{\Gamma(b)}.
    \end{align}
    In particular,
    $M(b,c;s)$
    satisfies
    \begin{align}\label{eq:M:bdd}
        C (1+s)^{b-c} e^s
        \le M(b,c;s)
        \le C' (1+s)^{b-c} e^s
    \end{align}
    with some positive constants
    $C=C(b,c)$
    and
    $C'=C(b,c)'$.
    \item[(iii)]
    More generally,
    if $-c \notin \mathbb{N}\cup \{0\}$
    and
    $c \ge b$,
    then, while the sign of $M(b,c;s)$
    is indefinite,
    it still has the asymptotic behavior
    \begin{align}\label{eq:M:infty:2}
        \lim_{s\to \infty} \frac{M(b,c;s)}{s^{b-c}e^s} = \frac{\Gamma(c)}{\Gamma(b)},
    \end{align}
    where we interpret that
    the right-hand side is zero if $-b \in \mathbb{N}\cup \{0\}$.
    In particular, $M(b,c;s)$
    has a bound
    \begin{align}\label{eq:M:bdd:gene:bc}
        |M(b,c;s)| \le C (1+s)^{b-c} e^s
    \end{align}
    with some positive constant $C=C(b,c)$.
    \item[(iv)]
    $M(b,c;s)$
    satisfies the relations
    \begin{align}
        sM(b,c;s)
        &= sM'(b,c;s) + (c-b)M(b,c;s) - (c-b)M(b-1,c;s),\\
        cM'(b,c;s)
        &= cM(b,c;s) - (c-b)M(b,c+1;s).
    \end{align}
\end{itemize}
\end{lemma}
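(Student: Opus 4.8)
The plan is to prove the four items separately: (i) and (iv) are purely formal power-series identities, (ii) rests on the Euler integral representation together with Laplace's method, and only (iii) requires genuine asymptotic analysis at the irregular singular point.

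For (i) I would insert the defining series $M(b,c;s)=\sum_{n\ge0}a_ns^n$ with $a_n=\frac{(b)_n}{(c)_nn!}$ into Kummer's equation and collect the coefficient of $s^n$, which equals $(n+1)(n+c)a_{n+1}-(n+b)a_n$. The Pochhammer recurrences $(b)_{n+1}=(b+n)(b)_n$ and $(c)_{n+1}=(c+n)(c)_n$ give $(n+1)(n+c)a_{n+1}=(b+n)a_n$, so every coefficient vanishes. Item (iv) is of the same type: for each contiguous relation I would compare the coefficients of $s^n$ on both sides, using $(b-1)_n=(b-1)(b)_{n-1}$, $(c+1)_n=c^{-1}(c)_n(c+n)$ and $a_n=a_{n-1}\frac{b+n-1}{(c+n-1)n}$. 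For the first relation the bracketed factor simplifies via the identity $(n+c-b)(n+b-1)-(c-b)(b-1)=n(n+c-1)$, and for the second via $\frac{b+m}{c+m}-1=\frac{b-c}{c+m}$; in both cases the coefficients match term by term.

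For (ii), positivity when $c\ge b>0$ is immediate, since every $(b)_n,(c)_n$ is positive, so each term of the series is nonnegative and the $n=0$ term equals $1$. For the asymptotic \eqref{eq:M:infty} I would use, for $c>b>0$, the integral representation
\[
 M(b,c;s)=\frac{\Gamma(c)}{\Gamma(b)\Gamma(c-b)}\int_0^1 e^{st}t^{b-1}(1-t)^{c-b-1}\,dt,
\]
the borderline $c=b$ being trivial because $M(b,b;s)=e^s$. The exponential weight concentrates the mass near $t=1$; substituting $t=1-\tau/s$ and applying Watson's lemma gives $\int_0^1 e^{st}t^{b-1}(1-t)^{c-b-1}\,dt\sim e^s\,\Gamma(c-b)\,s^{b-c}$, which yields the limit $\Gamma(c)/\Gamma(b)$. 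The two-sided bound \eqref{eq:M:bdd} then follows because the map $s\mapsto M(b,c;s)\big/\big((1+s)^{b-c}e^s\big)$ is continuous and strictly positive on $[0,\infty)$ and tends to a positive limit, hence is pinched between two positive constants.

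The hardest part is (iii), where $b$ may be nonpositive so the integral representation is unavailable. I would first dispose of the polynomial case: if $-b\in\mathbb{N}\cup\{0\}$ then $(b)_n=0$ for $n>-b$, so $M(b,c;s)$ is a polynomial and $M(b,c;s)\,s^{c-b}e^{-s}\to0$, in agreement with the convention that the right-hand side of \eqref{eq:M:infty:2} vanishes. For the remaining values of $b$ I would appeal to the classical large-$s$ analysis of Kummer's equation at its irregular singular point $s=\infty$: the equation has a dominant solution behaving like $e^ss^{b-c}$ and a recessive one behaving like $s^{-b}$, and the solution analytic at $s=0$ normalized by $M(b,c;0)=1$ must carry the coefficient $\Gamma(c)/\Gamma(b)$ on the dominant term. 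Identifying this coefficient is the crux; it can be fixed either by analytic continuation in $b$ from the range $c>b>0$ treated above (both sides of \eqref{eq:M:infty:2} being meromorphic in $b$), or by the standard asymptotic expansion of the confluent hypergeometric function. Once \eqref{eq:M:infty:2} is established, the bound \eqref{eq:M:bdd:gene:bc} is obtained exactly as in (ii), this time using only that $|M(b,c;s)|\big/\big((1+s)^{b-c}e^s\big)$ is continuous on $[0,\infty)$ with a finite limit at infinity.
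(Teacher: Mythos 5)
Your treatments of (i), (ii), and (iv) are correct. For (i) and (iv) you verify the identities by direct comparison of power-series coefficients; this is sound (your two algebraic identities check out), and for (iv) it is actually more self-contained than the paper, which simply cites Beals--Wong for the contiguous relations. For (ii) you follow essentially the paper's route: positivity from the series (the paper gets it from the integral representation, but your observation is even simpler), the Euler integral representation for $c>b>0$, and the large-$s$ asymptotics near $t=1$; the paper cites Beals--Wong (6.1.8) where you invoke Watson's lemma, and the pinching argument for \eqref{eq:M:bdd} is the same.

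The genuine gap is in (iii), which is the one item where the paper does real work, precisely because the integral representation is unavailable when $b\le 0$. You correctly isolate the crux --- identifying the coefficient $\Gamma(c)/\Gamma(b)$ on the dominant solution $e^s s^{b-c}$ --- but neither of your two proposed fixes is a proof. Invoking ``the standard asymptotic expansion of the confluent hypergeometric function'' outsources exactly the statement to be proven. The analytic-continuation-in-$b$ route fails as stated: to conclude that $\lim_{s\to\infty}M(b,c;s)\,s^{c-b}e^{-s}$ exists for the new values of $b$ and defines a function of $b$ agreeing with $\Gamma(c)/\Gamma(b)$, you would need locally uniform (in $b$) control of the family $s\mapsto M(b,c;s)\,s^{c-b}e^{-s}$ for large $s$ (Vitali/Montel), and such a bound is essentially \eqref{eq:M:bdd:gene:bc}, i.e.\ the thing being proven; the crude estimate $|(b)_n|\le(\lceil|b|\rceil)_n$ only yields $|M(b,c;s)|\le C\,s^{\lceil|b|\rceil-c}e^s$, which is not uniform in the required sense. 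The paper closes this gap with an elementary device you could adopt: from the series,
\begin{align}
    \frac{d^m}{ds^m}M(b,c;s)=\frac{(b)_m}{(c)_m}\,M(b+m,c+m;s),
\end{align}
and choosing $m\in\mathbb{N}$ with $b+m>0$ (possible since $-b\notin\mathbb{N}\cup\{0\}$), the right-hand side falls under case (ii) because $c+m\ge b+m>0$ and $(b+m)-(c+m)=b-c$. Then $m$ applications of l'H\^opital's rule (justified since the derivatives of the denominator $s^{b-c}e^s$ and, by the formula above, of the numerator tend to infinity in modulus) give
\begin{align}
    \lim_{s\to\infty}\frac{M(b,c;s)}{s^{b-c}e^s}
    =\frac{(b)_m}{(c)_m}\cdot\frac{\Gamma(c+m)}{\Gamma(b+m)}
    =\frac{\Gamma(c)}{\Gamma(b)},
\end{align}
which is the missing identification of the coefficient, reducing (iii) to the already-proved case (ii) rather than to an external asymptotic theory.
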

\begin{proof}
The property (i) is directly obtained from the
definition of $M(b,c;s)$.
When $c=b>0$, (ii) is obvious from
$M(b,b;s)=e^s$.
When $c>b>0$,
we have the integral representation
(see \cite[(6.1.3)]{BeWo10})
\begin{align}
    M(b,c;s)
    = \frac{\Gamma(c)}{\Gamma(b)\Gamma(c-b)}
    \int_0^1 t^{b-1} (1-t)^{c-b-1} e^{ts} \,dt,
\end{align}
which implies
$M(b,c;s) > 0$.
Moreover, \cite[(6.1.8)]{BeWo10} shows
the asymptotic behavior \eqref{eq:M:infty}.
The estimate \eqref{eq:M:bdd} is obvious,
since the right-hand side of \eqref{eq:M:infty} is positive
and $M(b,c;s) > 0$ for $s\ge 0$.
Next, the property (iii)
clearly holds if
$c=b$ or $-b \in \mathbb{N}\cup \{0 \}$,
since
$M(b,c;s)$
is a polynomial of order $-b$ if $-b\in \mathbb{N}\cup \{ 0\}$.
For the cases
$c > b$ and $-b \notin \mathbb{N}\cup \{0\}$,
note that for any $m \in \mathbb{N}\cup \{0\}$
we have
\begin{align}
    \frac{d^m}{ds^m}M (b,c;s) = \frac{(b)_m}{(c)_m} M(b+m,c+m;s),
\end{align}
which implies
$|\frac{d^m}{ds^m}M(b,c;s)| \to \infty$ as $s\to \infty$.
By taking $m \in \mathbb{N} \cup \{0\}$ so that
$b+m > 0$
and applying l'H\^{o}pital theorem we deduce
\begin{align}
    \lim_{s\to \infty} \frac{M(b,c;s)}{s^{b-c} e^s}
    &= \lim_{s\to \infty} \frac{\frac{d^m}{ds^m} M(b,c;s)}{\frac{d^m}{ds^m} (s^{b-c}e^s)}
    = \frac{(b)_m}{(c)_m} \lim_{s\to \infty}
    \frac{M(b+m,c+m;s)}{s^{b-c}e^s+o(s^{b-c}e^s)} \\
    &= \frac{(b)_m \Gamma(c+m)}{(c)_m \Gamma(b+m)}
    = \frac{\Gamma(c)}{\Gamma(b)}.
\end{align}
The estimate \eqref{eq:M:bdd:gene:bc} is
easily follows from the asymptotic behavior
\eqref{eq:M:infty:2} and we have (iii).
Finally, the property (iv) can be found in \cite[p.200]{BeWo10}.
\end{proof}
\subsection{Proof of Lemma \ref{lem:phi.beta}}

\begin{proof}[Proof of Lemma \ref{lem:phi.beta}]
The property (i) is directly follows from
Lemma \ref{lem:kummer} (i).
For (ii), noting that
$0\le \beta < \gamma_{\varepsilon}$
and applying Lemma \ref{lem:kummer} (ii) with
$b=\gamma_{\varepsilon}-\beta$ and $c=\gamma_{\varepsilon}$,
we have
$\varphi_{\beta}(s) > 0$ for $s\ge 0$ and
\begin{align}
    \lim_{s\to \infty} s^{\beta} \varphi_{\beta,\varepsilon}(s)
= \frac{\Gamma(\gamma_{\varepsilon})}{\Gamma(\gamma_{\varepsilon}-\beta)}.
\end{align}
This proves the property (ii).
Next, by Lemma \ref{lem:kummer} (iii) with
$b=\gamma_{\varepsilon} - \beta$ and $c = \gamma_{\varepsilon}$,
one still obtains
$\lim_{s\to \infty} s^{\beta} \varphi_{\varepsilon}(s)
= \Gamma(\gamma_{\varepsilon})/\Gamma(\gamma_{\varepsilon}-\beta)$,
where the right-hand side is interpreted as zero
if $\beta - \gamma_{\varepsilon} \in \mathbb{N}\cup \{0\}$.
In particular, this (or the estimate \eqref{eq:M:bdd:gene:bc}) gives
\begin{align}
    | \varphi_{\beta,\varepsilon}(s)| \le K_{\beta, \varepsilon} (1+s)^{-\beta}
\end{align}
with some constant $K_{\beta,\varepsilon}>0$.
Thus, we have (iii).
Noting that
\begin{align}\label{eq:varphi'}
    \varphi_{\beta,\varepsilon}'(s)
    = e^{-s} \left[
    - M(\gamma_{\varepsilon}-\beta, \gamma_{\varepsilon};s)
    + M'(\gamma_{\varepsilon}-\beta, \gamma_{\varepsilon};s)
    \right]
\end{align}
and applying the first assertion of Lemma \ref{lem:kummer} (iv),
we have the property (iv).
Finally, from \eqref{eq:varphi'} and the second assertion of Lemma \ref{lem:kummer} (iv), we obtain
\begin{align}
    \gamma_{\varepsilon} \varphi_{\beta,\varepsilon}'(s)
    = - \beta e^{-s} M(\gamma_{\varepsilon}-\beta, \gamma_{\varepsilon} + 1; s).
\end{align}
Differentiating again the above identity gives
\begin{align}
    \gamma_{\varepsilon} \varphi_{\beta,\varepsilon}''(s)
    = - \beta e^{-s}
     \left[
    - M(\gamma_{\varepsilon}-\beta, \gamma_{\varepsilon}+1;s)
    + M'(\gamma_{\varepsilon}-\beta, \gamma_{\varepsilon}+1;s)
    \right].
\end{align}
Therefore, the second assertion of Lemma \ref{lem:kummer} (iv) implies
\begin{align}
   \gamma_{\varepsilon} (\gamma_{\varepsilon}+1)
   \varphi_{\beta,\varepsilon}''(s)
   = \beta (\beta+1) e^{-s} M(\gamma_{\varepsilon}-\beta, \gamma_{\varepsilon}+2;s).
\end{align}
In particular, if
$0< \beta < \gamma_{\varepsilon}$,
then Lemma \ref{lem:kummer} (ii)
shows that
$M(\gamma_{\varepsilon}-\beta, \gamma_{\varepsilon}+1;s)$
(resp.
$M(\gamma_{\varepsilon}-\beta, \gamma_{\varepsilon}+2;s)$
)
is bounded from above and below by
$(1+s)^{-\beta-1}e^s$
(resp. $(1+s)^{-\beta-2}e^s$),
and hence,
we have the assertions of (v).
\end{proof}
\subsection{Proof of Proposition \ref{prop:super-sol}}
We are now in a position to prove
Proposition \ref{prop:super-sol}.
\begin{proof}[Proof of Proposition \ref{prop:super-sol}]
Let
$z= \widetilde{\gamma}_{\varepsilon}A_{\varepsilon}(x)/(t_0+t)$.
From Definition \ref{phi.beta.ep} and Lemma \ref{lem:phi.beta} (iv),
one obtains
\begin{align}
    \partial_t \Phi_{\beta,\varepsilon}(t,x;t_0)
    &= - (t_0+t)^{-\beta-1}
    \left[
    \beta \varphi_{\beta,\varepsilon}(z)
    + z \varphi_{\beta,\varepsilon}'(z)
    \right] \\
    &= - (t_0+t)^{-\beta-1} \beta \varphi_{\beta+1,\varepsilon}(z) \\
    &= - \beta \Phi_{\beta+1,\varepsilon} (t,x;t_0),
\end{align}
which proves (i).
Applying Lemma \ref{lem:phi.beta} (iii), we have
\begin{align}
    | \Phi_{\beta,\varepsilon} (t,x;t_0) |
    &\le K_{\beta,\varepsilon} (t_0+t)^{-\beta}
    \left( 1+ \frac{\widetilde{\gamma}_{\varepsilon}A_{\varepsilon}(x)}{t_0+t} \right)^{-\beta} \\
    &\le C
    \left( t_0 + t + A_{\varepsilon}(x) \right)^{-\beta} \\
    &= C
    \Psi(t,x;t_0)^{-\beta}
\end{align}
with some constant
$C = C(n,\alpha,\beta,\varepsilon) > 0$.
This implies (ii).
Next, by Lemma \ref{lem:phi.beta} (ii),
$\Phi_{\beta,\varepsilon}(t,x;t_0)$
satisfies
\begin{align}
    \Phi_{\beta,\varepsilon}(t,x;t_0)
    &\ge k_{\beta,\varepsilon} (t_0+t)^{-\beta}
    \left( 1 + \frac{\widetilde{\gamma}_{\varepsilon}A_{\varepsilon}(x)}{t_0+t} \right)^{-\beta} \\
    &\ge c 
    \left( t_0 + t + A_{\varepsilon}(x) \right)^{-\beta} \\
    &= c \Psi(t,x;t_0)^{-\beta}
\end{align}
with some constant
$c = c(n,\alpha,\beta,\varepsilon) > 0$,
and (iii) is verified.
For (iv), we again put
$z = \tilde{\gamma}_{\varepsilon} A_{\varepsilon}(x)/(t_0+t)$
and compute
\begin{align}
    &a(x) \partial_t \Phi_{\beta,\varepsilon} (x,t;t_0) - \Delta \Phi_{\beta,\varepsilon} (x,t;t_0) \\
    &=
    -a(x)(t_0+t)^{-\beta-1} \\
    &\times
    \left( \beta \varphi_{\beta,\varepsilon}(z) + z \varphi_{\beta,\varepsilon}'(z)
    + \tilde{\gamma}_{\varepsilon} \frac{\Delta A_{\varepsilon}(x)}{a(x)} \varphi_{\beta,\varepsilon}'(z) + \tilde{\gamma}_{\varepsilon} \frac{|\nabla A_{\varepsilon}(x)|^2}{a(x)A_{\varepsilon}(x)} z \varphi_{\beta,\varepsilon}''(z) \right).
\end{align}
Using the equation \eqref{eq:varphi}
and the definition \eqref{gammatilde},
we rewrite the right-hand side as
\begin{align}
    &\tilde{\gamma}_{\varepsilon} a(x) (t_0+t)^{-\beta-1}
    \left( 1 - 2\varepsilon - \frac{\Delta A_{\varepsilon}(x)}{a(x)} \right) \varphi_{\beta,\varepsilon}'(z) \\
    &\quad + a(x)(t_0+t)^{-\beta-1}
    \left( 1 - \tilde{\gamma}_{\varepsilon} \frac{|\nabla A_{\varepsilon}(x)|^2}{a(x)A_{\varepsilon}(x)} \right) \varphi_{\beta,\varepsilon}''(z).
\end{align}
By \eqref{A1} and \eqref{A3} in Lemma \ref{lem:A:ep},
we have
\begin{align}
    &1-2\varepsilon - \frac{\Delta A_{\varepsilon}(x)}{a(x)}
    \le - \varepsilon,\\
    &1 - \tilde{\gamma}_{\varepsilon} \frac{|\nabla A_{\varepsilon}(x)|^2}{a(x)A_{\varepsilon}(x)}
    \ge
    \varepsilon \left( \frac{2-\alpha}{n-\alpha} + 2\varepsilon \right)^{-1} > 0.
\end{align}
From them and the property (v) of Lemma \ref{lem:phi.beta},
we conclude
\begin{align}
    a(x) \partial_t \Phi_{\beta,\varepsilon} (x,t;t_0) - \Delta \Phi_{\beta,\varepsilon} (x,t;t_0)
    &\ge 
    - \varepsilon \tilde{\gamma}_{\varepsilon} a(x) (t_0+t)^{-\beta-1}  \varphi_{\beta,\varepsilon}' \left( \frac{\tilde{\gamma}_{\varepsilon} A_{\varepsilon}(x)}{t_0+t} \right) \\
    &\ge 
    \varepsilon k_{\beta,\varepsilon} a(x) (t_0+t)^{-\beta-1} \left( 1 + \frac{\tilde{\gamma}_{\varepsilon} A_{\varepsilon}(x)}{t_0+t} \right)^{-\beta-1} \\
    &\ge 
    c a(x)  \left(t_0 + t + A_{\varepsilon}(x) \right)^{-\beta-1} \\
    &=
    c a(x) \Psi(x,t;t_0)^{-\beta-1}
\end{align}
with some constant
$c = c(n,\alpha,\beta,\varepsilon) > 0$,
which completes the proof.
\end{proof}
\subsection{Proof of Lemma \ref{lem:deltaphi}}
\begin{proof}[Proof of Lemma \ref{lem:deltaphi}]
Putting
$v = \Phi^{-1+\delta} u$,
noting
$\nabla u = (1-\delta) \Phi^{-\delta}( \nabla \Phi )v + \Phi^{1-\delta} \nabla v$,
and applying integration by parts imply
\begin{align}
    &\int_{\Omega} |\nabla u|^2 \Phi^{-1+2\delta} \,dx \\
    &=
    \int_{\Omega} |\nabla v|^2 \Phi \,dx
    + 2(1-\delta) \int_{\Omega} v (\nabla v \cdot \nabla \Phi)\,dx
    + (1-\delta)^2 \int_{\Omega} |v|^2 \frac{|\nabla \Phi|^2}{\Phi} \,dx \\
    &=
    \int_{\Omega} |\nabla v|^2 \Phi \,dx
    - (1-\delta) \int_{\Omega} |v|^2 \Delta \Phi \,dx
    + (1-\delta)^2 \int_{\Omega} |v|^2 \frac{|\nabla \Phi|^2}{\Phi} \,dx \\
    &\ge
    -(1-\delta) \int_{\Omega} |u|^2 (\Delta \Phi) \Phi^{-2+2\delta}\,dx
    +(1-\delta)^2 \int_{\Omega} |u|^2 |\nabla \Phi|^2 \Phi^{-3+2\delta} \,dx.
\end{align}
By
$u\Delta u = - |\nabla u|^2 + \Delta (\frac{u^2}{2})$,
integration by parts,
and applying the above estimate,
we have
\begin{align}
    &\int_{\Omega} u\Delta u \Phi^{-1+2\delta} \,dx \\
    &=
    - \int_{\Omega} |\nabla u|^2 \Phi^{-1+2\delta} \,dx
    + \frac{1}{2} \int_{\Omega} |u|^2 \Delta (\Phi^{-1+2\delta})\,dx \\
    &=
     - \int_{\Omega} |\nabla u|^2 \Phi^{-1+2\delta} \,dx
     - \frac{1-2\delta}{2} \int_{\Omega} |u|^2 (\Delta \Phi) \Phi^{-2+2\delta} \,dx \\
     &\quad
     +(1-\delta)(1-2\delta) \int_{\Omega} |u|^2 |\nabla \Phi|^2 \Phi^{-3 + 2\delta} \,dx \\
     &\le
     - \frac{\delta}{1-\delta} \int_{\Omega} |\nabla u|^2 \Phi^{-1+2\delta} \,dx
     + \frac{1-2\delta}{2} \int_{\Omega} |u|^2 (\Delta \Phi) \Phi^{-2 + 2\delta} \,dx.
\end{align}
This completes the proof.
\end{proof}

\section*{Acknowledgements}
This work was supported by JSPS KAKENHI Grant Numbers
JP18H01132
and
JP20K14346.
The author would like to thank
Professor Hideaki Sunagawa
for the helpful comments
to simplify the argument of 
Section A.2.7.
The author is also grateful to
Professors Naoyasu Kita and
Motohiro Sobajima
for the valuable comments and 
discussions about the optimality
of the main results.
Finally, the author thank the anonymous referees
for careful reeding of the manuscript and
their very helpful comments.

\end{document}